\newtheorem{theorem}{Theorem}[section]
\newtheorem{lemma}[theorem]{Lemma}
\theoremstyle{definition}
\newtheorem{definition}[theorem]{Definition}   
\theoremstyle{remark}
\renewcommand{\P}{\mathcal P}
\renewcommand{\S}{\mathfrak S}
\newcommand{\G}{\mathcal G}
\newcommand{\Z}{\mathbb Z}
\DeclareMathOperator\maj{maj}
\newcommand{\qbin}[2]{\begin{bmatrix}{#1}\\ {#2}\end{bmatrix}_q}
\newcommand{\qbinsq}[2]{\begin{bmatrix}{#1}\\ {#2}\end{bmatrix}_{q^2}}
\newcommand{\bij}{\theta}
\newcommand{\Bij}{\Theta}
\DeclareMathOperator\cro{\chi}
\newcommand\start{circle(.07)}
\newcommand\startud{circle(.07)}
\newcommand\E{-- ++(1,0) \start}
\newcommand\N{-- ++(0,1) \start}
\newcommand\dn{-- ++(1,-1) \startud}
\newcommand\up{-- ++(1,1) \startud}
\newcommand{\si}{\sigma}
\newcommand{\ta}{\tau}
\newcommand{\bs}{\bar\sigma}
\newcommand{\bt}{\bar\tau}
\newcommand{\om}{\omega}
\newcommand{\bom}{\overline\omega}
\newcommand{\de}{\circ}
\newcommand{\bde}{\bullet} 
\newcommand{\vv}{\mathbf v}
\newcommand{\uu}{\mathbf u}
\newcommand{\nn}{\mathbf n}
\newcommand{\ee}{\mathbf e}
\newcommand{\bP}{\mathbf P}
\newcommand{\fP}{\mathfrak P}
\DeclareMathOperator\sgn{sgn}
\newcommand{\phiC}{\phi_C}
\newcommand{\rt}{\vdash}
\newcommand{\lt}{\dashv}
\newenvironment{list1}{
  \begin{list}{}{%
      \setlength{\itemsep}{2mm}
      \setlength{\parsep}{1mm} \setlength{\parskip}{1mm}
      \setlength{\topsep}{0mm} \setlength{\partopsep}{0in}
      \setlength{\leftmargin}{0in}}}{\end{list}}
\newcommand\paths[4]{\P_{#1\to #2,#3\to #4}}
\newcommand\pathsP[5]{\P_{#1\to #2,#3\to #4}^{\ge #5}}
\newcommand\pathsN[5]{{\mathcal N}_{#1\to #2,#3\to #4}^{\ge #5}}
\newcommand\pathsE[5]{{\mathcal E}_{#1\to #2,#3\to #4}^{\ge #5}}
\newcommand\pathsNC[4]{{\mathcal N}_{#1\to #2,#3\to #4}^{C}}
\newcommand\pathsPD[4]{{\mathcal P}_{#1\to #2,#3\to #4}^{\Join}}
\newcommand\markx[4]{
\draw (#1,.2)--(#1,-.2) node[below] {$#3$};
\draw (.2,#2)--(-.2,#2) node[left] {$#4$};
\draw[dotted] (#1,.3)--(#1,#2)--(.3,#2);
}
\newcommand\marky[4]{\markx{#2}{#1}{#4}{#3}}
\newcommand\crossing[2]{\draw[thick] (#1,#2) circle (.2);}
\newcommand\intersection[2]{\draw[thick,fill] (#1,#2) circle (.2);}
\newcommand\cutting[2]{\draw[thick] (#1-.3,#2+.3)--(#1+.3,#2-.3); \draw[thick] (#1-.3,#2-.3)--(#1+.3,#2+.3);}
\newcommand\cuttingthin[2]{\draw[thick,lightgray] (#1-.3,#2+.3)--(#1+.3,#2-.3); \draw[thick,lightgray] (#1-.3,#2-.3)--(#1+.3,#2+.3);}
\newcommand\diamant[2]{\draw[teal] (#1-.25,#2)--(#1,#2+.25)--(#1+.25,#2)--(#1,#2-.25)--(#1-.25,#2);}
\newcommand\movepeakvalley[2]{
\diamant{#1}{#2} 
\diamant{#1}{-#2} 
\ifthenelse{#2=0}{}{\draw[teal,dotted,thick,->,shorten <=4pt,shorten >=4pt] (#1,#2)--(#1,-#2);}
}
\author{Sergi Elizalde\thanks{Department of Mathematics, Dartmouth College, Hanover, NH 03755, USA. {\tt sergi.elizalde@dartmouth.edu}.}}
\title{Counting lattice paths by crossings and major index I:\\ the corner-flipping bijections}
\date{}
\begin{document}

\maketitle

\begin{abstract}
We solve two problems regarding the enumeration of lattice paths in $\Z^2$ with steps $(1,1)$ and $(1,-1)$ with respect to the major index, defined as the sum of the positions of the valleys, and to the number of certain crossings.
The first problem considers crossings of a single path with a fixed horizontal line.
The second one counts pairs of paths with respect to the number of times they cross each other. 
Our proofs introduce lattice path bijections with convenient visual descriptions, and the answers are given by remarkably simple formulas involving $q$-binomial coefficients. 
\end{abstract}

\section{Introduction}\label{sec:intro}

\subsection{Background}

The enumeration of lattice paths is an important topic both in combinatorics and in mathematical statistics, as discussed in the surveys by Mohanty~\cite{Moh} and Krattenthaler~\cite{Krat}.
In the particular case of lattice paths in the plane with two types of steps, common questions involve counting paths constrained by some boundary, as well as counting paths with respect to various statistics.

One important such statistic is the number of times that a path crosses a given line. Several instances of the enumeration of paths by this kind of statistic have appeared in the probability and statistics literature~\cite{Eng,Sen,Feller-book,KW,Spivey}, often resulting in nice formulas involving binomial coefficients. Another statistic commonly studied in the combinatorics literature is the {\em major index} of a path, which can be defined as the sum of the positions of its turns in a given direction. This statistic on paths, which arises naturally when interpreting them as a binary words, has been studied, for example, in~\cite{KM, Krat-turns,Krat-nonint,SaSa}.

In this paper we consider the enumeration of lattice paths simultaneously by the number of crossings and the major index. We show that, rather surprisingly, the resulting enumeration formulas with respect to both statistics are quite simple, having closed forms in terms of $q$-binomial coefficients.
Intriguingly, the methods that have been commonly used to count paths by the number of crossings do not give an obvious explanation for such simple formulas. However, in all cases, we are able to prove them bijectively.

We consider two different but related problems. The first one concerns single lattice paths, which will be enumerated with respect to the major index and the number of times that they cross a fixed line. The second one involves pairs of lattice paths, which will be enumerated with respect to the sum of their major indices and the number of times that they cross each other. 
The tools used to solve both problems are similar, and they involve certain lattice path bijections that, unlike classical methods such as the reflection principle and prefix-swapping operations, behave well with respect to the major index.

In the case of zero crossings, our work relates to the important topic of non-crossing (or non-intersecting, after a simple transformation) paths, which have been studied for decades.
The celebrated determinantal formula by Gessel and Viennot \cite{GV} enumerating tuples of non-intersecting paths, previously discovered by Lindstr\"om~\cite{Lin} in the context of matroid theory, has connections to symmetric functions, tableaux, plane partitions and tilings, and even to statistical physics \cite{Fisher} and chemistry. A refinement of this formula that keeps track of the sum of the major indices of the paths has been given by
Krattenthaler~\cite{Krat-nonint}. 
Krattenthaler's formula in the special case of two paths is equivalent to our formula for pairs of paths in the special case of zero crossings. In Section~\ref{sec:non-int}, we will show how our tools also yield an alternative proof of Krattenthaler's formula. While the ideas behind both proofs are similar, our bijections have simple descriptions directly in terms of paths, whereas the bijections in~\cite{Krat-nonint} require passing through other objects called two-rowed arrays.
We point out that it is an open question whether our formulas that enumerate pairs of paths with a given number of crossings can be extended to $k$-tuples of paths for $k>2$.

In the special case of pairs of paths with at least one common endpoint, there has been work by Gessel et al.~\cite{GGSWY} enumerating such pairs with respect to the number of lattice points where the paths intersect. 
There is, however, no direct relationship between this statistic and the number of crossings that we consider here, so it is no surprise that the 
summation formulas obtained in~\cite{GGSWY} are different from ours. 

It is important to note that, even though this paper focuses exclusively on 
lattice paths, our work has applications to the enumeration of integer partitions with constrained ranks. 
Specifically, our formula for paths crossing a line is one of the tools that is used in a forthcoming paper by Corteel et al.~\cite{CES} to enumerate partitions with a given number of off-diagonal rank parity blocks, which generalizes results of Seo and Yee~\cite{SeoYee}.

Finally, another follow-up paper~\cite{part2} will further refine our results by another statistic: the {\em number of descents}, which also arises naturally when interpreting paths as binary words, and can be described as the number of turns of the path in a given direction. The proofs of the refined version can no longer be visualized as lattice path bijections,
but rather they are based on certain two-rowed arrays that have been previously used by Krattenthaler and Mohanty~\cite{Krat-turns,Krat-nonint,KM}.

\subsection{Basic definitions}\label{sec:basic}

We consider simple  
lattice paths in $\Z^2$ with steps $U=(1,1)$ and $D=(1,-1)$ (up and down), although sometimes it will be convenient to consider the steps to be $N=(0,1)$ and $E=(1,0)$ (north and east) instead. One type of paths is obtained from the other by rotating $45^\circ$ and stretching by a factor of $\sqrt{2}$; equivalently, using the substitutions $U\leftrightarrow N$ and $D\leftrightarrow E$. We will use both settings interchangeably.

For nonnegative integers $a,b$, let $\G_{a,b}$ denote the set of paths with $a$ steps $U$ and $b$ steps $D$, usually starting at the origin, although later it will be convenient to allow other initial points on the $y$-axis. 
The sequence of steps of such a path can be encoded as a binary word with $a$ zeros and $b$ ones, by identifying $U$s with $0$s and $D$s with $1$s\footnote{We follow the convention of F\"urlinger and Hofbauer~\cite{FH}. Other papers use a different encoding whereby descents of the word become {\em peaks} of the path, and $\maj(P)$ is defined be the sum of the $x$-coordinates of the peaks of $P$.}. Under this encoding, descents of the word correspond to {\em valleys} of the path, defined as vertices that are preceded by a $D$ and followed by a $U$. 
The major index, which is a common statistic on words, can then be translated to paths $P\in\G_{a,b}$, by defining $\maj(P)$ to be the sum of $x$-coordinates of the valleys of $P$. See Figure~\ref{fig:Gstrl} for an example.

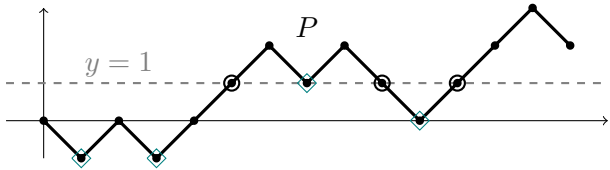
\begin{figure}[htb]
\centering
\begin{tikzpicture}[scale=0.5]
\draw[->] (-1,0)--(15,0);
\draw[->] (0,-1)--(0,3);
\draw[dashed,thick,gray] (-1,1)--(15,1);
\draw (2,1.5) node[gray] {$y=1$};
\draw[very thick,fill] (0,0) \startud \dn\up\dn\up\up\up\dn\up\dn\dn\up\up\up\dn;
\draw (7,2.5) node {$P$};
\crossing{5}{1}
\crossing{9}{1}
\crossing{11}{1}
\diamant{1}{-1}
\diamant{3}{-1}
\diamant{7}{1}
\diamant{10}{0}
\end{tikzpicture}
\caption{A path $P\in\G_{8,6}^{\ge 3,1}$ with $\maj(P)=1+3+7+10=21$. The four valleys are marked with teal diamonds, and the
three crossings of the line $y=1$ are circled in black. The middle crossing is a downward crossing, whereas the other two are upward crossings.}
\label{fig:Gstrl}
\end{figure}

When dealing with paths with $N$ and $E$ steps, we denote by $\P_{A\to B}$ the set of such paths that start at $A$ and end at $B$, where $A,B\in\Z^2$.  In this case, the {\em valleys} of $P\in\P_{A\to B}$ are the vertices preceded by an $E$ and followed by an $N$, and $\maj(P)$ is the sum of the positions of the valleys, where the position is determined by numbering the vertices of the path in increasing order starting at $A$, which would be position $0$, and ending at $B$. Note that $P_{(x,y)\to(u,v)}$ is empty unless $x\le u$ and $y\le v$.

Recall the {\em $q$-binomial coefficients}, defined as
$$\qbin{n}{k}=\frac{(1-q^n)(1-q^{n-1})\cdots(1-q^{n-k+1})}{(1-q^k)(1-q^{k-1})\cdots(1-q)}
$$
if $0\le k\le n$, and as $0$ otherwise.
The following is a classical result of MacMahon.

\begin{lemma}[\cite{Mac}] \label{lem:qbin}
For $a,b\ge0$,
$$\sum_{P\in\G_{a,b}} q^{\maj(P)} = \qbin{a+b}{a}.$$
Equivalently, if $A=(x,y)$ and $B=(u,v)$, then
$$\sum_{P\in\P_{A\to B}} q^{\maj(P)}=\qbin{u-x+v-y}{u-x}.$$
\end{lemma}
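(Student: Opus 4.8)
The plan is to prove the first identity by induction on the number of steps $n=a+b$; the equivalent statement for $\P_{A\to B}$ then follows at once, since the substitution $U\leftrightarrow N$, $D\leftrightarrow E$ carries each $P\in\G_{a,b}$ to a path from $A=(x,y)$ to $B=(u,v)$ with $v-y=a$ and $u-x=b$ while sending valleys to valleys and preserving their positions, so that $\maj$ is preserved and $\qbin{u-x+v-y}{u-x}=\qbin{a+b}{a}$. The base cases $a=0$ or $b=0$ are immediate, as $\G_{a,b}$ is then a single path of major index $0$ and the relevant $q$-binomial coefficient equals $1$.

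For the inductive step I would refine the sum according to the last step, writing $\sum_{P\in\G_{a,b}}q^{\maj P}=F_U(a,b)+F_D(a,b)$, where $F_U$ and $F_D$ collect the paths ending in $U$ and in $D$, respectively. The crucial point—and the reason $\maj$ must be tracked through this refinement rather than directly—is that appending a step affects $\maj$ cleanly in only one of the two cases. Concretely, appending a $D$ to any $P'\in\G_{a,b-1}$ makes neither of the two newly relevant vertices a valley (the final vertex is never a valley, and the penultimate one is now followed by a $D$), so $\maj$ is unchanged and $F_D(a,b)=\sum_{P'\in\G_{a,b-1}}q^{\maj P'}$. Appending a $U$ to $P'\in\G_{a-1,b}$, on the other hand, creates a new valley at the penultimate vertex—whose $x$-coordinate is $a+b-1$—precisely when $P'$ ends in a $D$, which yields $F_U(a,b)=F_U(a-1,b)+q^{a+b-1}F_D(a-1,b)$.

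It then remains to solve this coupled recurrence. By the inductive hypothesis $F_D(a,b)=\qbin{a+b-1}{a}$, and I would verify that $F_U(a,b)=q^{b}\qbin{a+b-1}{a-1}$ satisfies its recurrence, which reduces to the standard $q$-Pascal relation $\qbin{m}{k}=\qbin{m-1}{k-1}+q^{k}\qbin{m-1}{k}$ with $m=a+b-1$ and $k=a-1$. Summing the two contributions and applying the complementary relation $\qbin{m}{k}=q^{m-k}\qbin{m-1}{k-1}+\qbin{m-1}{k}$ with $m=a+b$, $k=a$ gives $F_U(a,b)+F_D(a,b)=\qbin{a+b}{a}$, completing the induction.

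I expect the main obstacle to be conceptual rather than computational: $\maj$ admits no single-term deletion recurrence, because removing a step shifts the positions of all later valleys and can create or destroy a valley at the deletion site. The device above sidesteps this by exploiting that appending a $D$ leaves every position untouched and produces no valley, so that all of the position-dependence is confined to the single controlled factor $q^{a+b-1}$ in the recurrence for $F_U$. Once this refinement by last step is in place, checking that the proposed closed forms satisfy the recurrences is a routine application of the two $q$-Pascal identities.
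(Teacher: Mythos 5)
The paper does not prove this lemma at all: it is quoted as a classical result of MacMahon, with a citation in place of a proof. Your argument therefore cannot match the paper's, but it is a correct, self-contained derivation, and it is essentially the standard inductive proof of MacMahon's theorem. All the individual steps check out under the paper's conventions: appending a $D$ creates no valley and shifts none, so $F_D(a,b)=\sum_{P'\in\G_{a,b-1}}q^{\maj(P')}=\qbin{a+b-1}{a}$; appending a $U$ creates a valley at $x=a+b-1$ exactly when the shorter path ends in $D$, giving $F_U(a,b)=F_U(a-1,b)+q^{a+b-1}F_D(a-1,b)$; and your closed form $F_U(a,b)=q^{b}\qbin{a+b-1}{a-1}$ satisfies this recurrence via $\qbin{a+b-1}{a-1}=\qbin{a+b-2}{a-2}+q^{a-1}\qbin{a+b-2}{a-1}$, with the final sum collapsing by the complementary $q$-Pascal identity. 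One small point of rigor you should make explicit: since the recurrence for $F_U(a,b)$ refers to $F_U(a-1,b)$ and not merely to the total sum at a smaller size, the statement you actually induct on must be the strengthened one giving closed forms for both $F_U$ and $F_D$ simultaneously (together with the boundary checks $F_U(a,0)=1$, $F_U(0,b)=0$, etc., which your formulas do satisfy, using the convention that $\qbin{n}{k}=0$ for $k<0$). The reduction of the second displayed identity to the first via $U\leftrightarrow N$, $D\leftrightarrow E$ and the symmetry $\qbin{a+b}{b}=\qbin{a+b}{a}$ is also fine. What your approach buys is independence from the literature; what it costs is nothing, since the lemma is elementary and the paper only ever uses it as a black box.
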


The rest of the paper is structured as follows. In Section~\ref{sec:main} we state our main theorems, after establishing some more definitions and notation. Section~\ref{sec:ingredients} introduces some tools, in particular four closely related bijections, that will play a key role in our proofs. Section~\ref{sec:line-proofs} applies these tools to prove our results from Section~\ref{sec:line} about the enumeration of paths by the number of crossings of a horizontal line, while Section~\ref{sec:pairs-proofs} applies them to prove our results from Section~\ref{sec:pairs} about the enumeration of pairs of paths by the number of times they cross each other. Our proofs in the two settings have certain similarities, but Section~\ref{sec:pairs-proofs} can be read independently from Section~\ref{sec:line-proofs}. 
Section~\ref{sec:connections} uses our construction to give an alternative proof of Krattenthaler's refined enumeration of tuples of nonintersecting paths~\cite{Krat-nonint}.
We discuss possible extensions of our work in Section~\ref{sec:further}.

\section{Main results}\label{sec:main}

\subsection{Paths crossing a line}\label{sec:line}

First we state our results about the enumeration of paths with $U$ and $D$ steps with respect to the major index and to the number of times that they cross a horizontal line. For $\ell,r\in\Z$, where $r\ge0$, let $\G_{a,b}^{\ge r,\ell}$ denote the set of paths in $\G_{a,b}$ that cross the line $y=\ell$ at least $r$ times.
For this definition, a vertex of the path on the line $y=\ell$ is a {\em crossing} if it is either preceded and followed by a $D$ (in which case it is called a {\em downward crossing}), or preceded and followed by a $U$ (called an {\em upward crossing}). See Figure~\ref{fig:Gstrl} for an example.

We are interested in the polynomials
$$G_{a,b}^{\ge r,\ell}(q)=\sum_{P\in\G_{a,b}^{\ge r,\ell}} q^{\maj(P)}.$$
The polynomials that count paths crossing the line $y=\ell$ {\em exactly} $r$ times can be easily expressed in terms of these as $$G_{a,b}^{=r,\ell}(q)=G_{a,b}^{\ge r,\ell}(q)-G_{a,b}^{\ge r+1,\ell}(q).$$
We will provide a formula for $G_{a,b}^{\ge r,\ell}(q)$ for arbitrary $a,b,r,\ell\in\Z$ with $a,b,r\ge0$. The formula is slightly different depending on whether the starting and ending points of the path are above, below, or on the line being crossed. In each case, the resulting expressions are surprisingly
simple, consisting of a $q$-binomial coefficient times a power of $q$.
Despite the simple formulas, our proof is by no means trivial. 
In each case, we provide a bijection from $\G_{a,b}^{\ge r,\ell}$ to a set of paths with no requirements on the number of crossings, which can then be enumerated using Lemma~\ref{lem:qbin}. A key property of our bijection is that it has a predictable effect on the major index of the paths.

Let us first state the result in the case $\ell=0$, that is, when considering crossings of the $x$-axis.

\begin{theorem}\label{thm:xaxis}
For any $a,b,r\ge0$,
\begin{subnumcases}{G_{a,b}^{\ge r,0}(q)=} 
q^{\binom{r+1}{2}}\qbin{a+b}{a+r} & if $a>b$, \label{eq:0=l<a-b}\\
(1+q^a)q^{\binom{r+1}{2}}\qbin{2a-1}{a+r} & if $a=b$, \label{eq:0=l=a-b}\\
q^{\binom{r}{2}}\qbin{a+b}{a-r} & if $a<b$. \label{eq:0=l>a-b}
\end{subnumcases}
\end{theorem}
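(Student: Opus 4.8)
The overall strategy I would follow is the one advertised just before the statement: in each of the three cases, produce an explicit bijection from $\G_{a,b}^{\ge r,0}$ onto an \emph{unconstrained} set of paths whose $\maj$-generating function is a single $q$-binomial, while controlling the effect on $\maj$ so that it contributes exactly the stated power of $q$; the count then follows from Lemma~\ref{lem:qbin}. For the principal case $a>b$, where the path ends above the line, the natural target is $\G_{a+r,b-r}$, since $\qbin{a+b}{a+r}=\sum_{Q\in\G_{a+r,b-r}}q^{\maj Q}$ by Lemma~\ref{lem:qbin}. Thus it suffices to build a bijection $\phi\colon\G_{a,b}^{\ge r,0}\to\G_{a+r,b-r}$ satisfying $\maj(P)=\maj(\phi(P))+\binom{r+1}{2}$.

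To construct $\phi$ I would peel off crossings one at a time using a local \emph{corner-flipping} move performed at the line $y=0$. Concretely, I would set up a single operation that takes a path ending above the line and crossing it at least once, locates a distinguished crossing (for instance the first upward crossing, equivalently the endpoint of the first maximal sub-path that stays weakly below the line), and flips the corner there so as to turn one $D$ into one $U$, raising the endpoint by $(0,2)$ and removing exactly one required crossing. Iterating this $r$ times converts $\G_{a,b}^{\ge r,0}$ into $\G_{a+r,b-r}$ with no crossing constraint remaining. Small checks, e.g.\ the unique crossing path $DUUUD\in\G_{3,2}^{\ge 1,0}$ mapping to $UUUUD\in\G_{4,1}$ with $\maj$ dropping from $1$ to $0$, suggest the clean bookkeeping that the $j$-th flip lowers $\maj$ by exactly $j$, so the total change is $\sum_{j=1}^{r}j=\binom{r+1}{2}$; I would prove this by tracking the $x$-positions of the affected valleys and showing the contributions telescope, and I would establish reversibility by checking that the inverse reinserts a below-line dip at the unique place dictated by the path.

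For $a<b$, where the path ends below the line, I would run the mirror-image construction, now converting $r$ up-steps into down-steps and landing in $\G_{a-r,b+r}$, whose $\maj$-generating function is $\qbin{a+b}{a-r}$. Because $\maj$ is defined through valleys ($DU$ corners) and is therefore \emph{not} symmetric under the top--bottom reflection $U\leftrightarrow D$, the flips near the line contribute a shift of $\binom{r}{2}$ rather than $\binom{r+1}{2}$, which is exactly the discrepancy between \eqref{eq:0=l<a-b} and \eqref{eq:0=l>a-b}. I would resist deriving this case from the previous one by simply applying the reflection $U\leftrightarrow D$: that reflection sends $\G_{a,b}^{\ge r,0}$ to $\G_{b,a}^{\ge r,0}$ and preserves the crossing count, but it changes $\maj$ by an amount depending on the number and positions of the turns, so it does not relate the two polynomials by a clean power of $q$. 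Finally, for $a=b$ (the path ends on the line) I would condition on the last step. Deleting it leaves a path of length $2a-1$ with the same crossings, lying in $\G_{a,a-1}$ if the last step is $D$ and in $\G_{a-1,a}$ if it is $U$; both cases just proved yield a $q$-binomial equal to $\qbin{2a-1}{a+r}$ after $\qbin{2a-1}{a-1-r}=\qbin{2a-1}{a+r}$, and re-appending the final step — which creates a valley at the last interior vertex precisely when it turns a $D$ into a $DU$ — supplies the factor $(1+q^a)$.

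The step I expect to be genuinely delicate is the $\maj$ accounting for the corner-flipping move: verifying that each flip shifts $\maj$ by exactly the predicted amount and that these contributions telescope to $\binom{r+1}{2}$ (respectively $\binom{r}{2}$), while simultaneously checking that the move is well defined and invertible no matter how the below-line (respectively above-line) excursions are nested. The $a=b$ case carries a parallel subtlety: re-appending the last step changes $\maj$ by a \emph{position-dependent} amount rather than a uniform power of $q$, so one must argue that summing the two last-step contributions nonetheless collapses to exactly $(1+q^a)\,q^{\binom{r+1}{2}}\qbin{2a-1}{a+r}$.
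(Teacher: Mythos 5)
Your high-level plan (peel off crossings one at a time via a bijection with a controlled effect on $\maj$, then invoke Lemma~\ref{lem:qbin}) is the right one, and your target sets $\G_{a+r,b-r}$, $\G_{a-r,b+r}$ and the exponents $\binom{r+1}{2}$, $\binom{r}{2}$ are correct. But the elementary move you propose does not work. You keep the starting point fixed, turn a single $D$ into a $U$ at (or within the excursion ending at) the \emph{leftmost} upward crossing, and raise the endpoint by $(0,2)$; this translates the entire suffix of the path after the flipped step up by two units, which scrambles every later crossing of $y=0$ rather than ``removing exactly one required crossing.'' Concretely, $P=DDUUUDDUUU\in\G_{6,4}^{\ge 3,0}$ has crossings at $x=4,6,8$; flipping either of the two $D$'s preceding its first upward crossing yields $DUUUUDDUUU$ (one crossing) or $UDUUUDDUUU$ (no crossings), so the image is not in $\G_{7,3}^{\ge 2,0}$ and the iteration collapses. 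The paper's maps $\ta_r,\si_r$ (Definition~\ref{def:tarsir}, Lemma~\ref{lem:maj-tarsir}) avoid exactly this: they act on the prefix up to the $r$th crossing \emph{from the right}, leaving the suffix --- hence the crossings $C_1,\dots,C_r$ --- untouched, and they move the \emph{starting} height of the prefix rather than the ending height of the suffix. Moreover the modification is not a local flip but a global one (all valleys of the prefix become peaks at the reflected positions, or vice versa), which in general changes several letters: $\si_1$ sends $UUDDDUU$ to $DUDUUUU$, altering three positions. Your per-step bookkeeping (``the $j$th flip lowers $\maj$ by exactly $j$'') is extrapolated from one example; in the paper's proof the per-step changes alternate between $0$ (for the $\ta_j$) and a sequence of odd numbers (for the $\si_j$) that happens to sum to $\binom{r+1}{2}$. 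So the crucial construction is missing, not merely ``delicate.''

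The $a=b$ case has a second gap. Deleting a final $D$ is indeed a $\maj$- and crossing-preserving bijection onto $\G^{\ge r,0}_{a,a-1}$, which handles half the paths. But for paths ending in $U$, re-appending the deleted $U$ adds $2a-1$ to $\maj$ precisely when the truncation ends in $D$, so your computation requires the distribution of $G^{\ge r,0}_{a-1,a}(q)$ refined by the last step --- information the $a<b$ case of the theorem does not provide, and whose determination is essentially another case of the whole argument. The paper sidesteps this by applying the diagonal reflection $\rho$, which maps $U$-ending paths in $\G_{a,a}^{\ge r,0}$ bijectively onto $D$-ending ones, preserves crossings, and shifts $\maj$ by exactly $-a$ by~\eqref{eq:majrho2}; this is what produces the factor $q^a$ cleanly, and it is the step you would need to supply or replace.
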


In this $\ell=0$ case, the specialization $q=1$ (i.e., when we disregard the major index) has been studied in the probability literature. The formula for $q=1$ has been known for over 50 years: it first appeared in work of Engelberg~\cite{Eng} and Sen~\cite{Sen}, and was later rediscovered by other authors~\cite{KM}. It refines a classical result of Feller~\cite{Feller,Feller-book} for paths without a fixed endpoint. Some of these papers also determine the limiting distribution of the number of crossings. The proofs in~\cite{Eng,Sen} consist essentially of repeatedly applying Andr\'e's reflection principle at each crossing.
Unfortunately, this method does not provide a proof of our refinement with the variable $q$, because the major index does not behave well under reflection of a piece of the path. Thus, proving Theorem~\ref{thm:xaxis} requires more sophisticated bijections that keep track of the statistic $\maj$.

The case $a>b$ of Theorem~\ref{thm:xaxis} can be shown to be equivalent to a result of Seo and Yee~\cite[Lemma 2.1]{SeoYee} concering the enumeration of ballot paths with marked returns, with respect to a different statistic that combines valleys and returns. Seo and Yee's proof is recursive, by induction on the length of the path, and so it does not give much insight on why the resulting formula is so simple. Similar ideas could be used to provide a recursive proof of Theorem~\ref{thm:xaxis}, but we prefer to present a bijective proof instead (see Section~\ref{sec:line-proofs}).

Next we state the result in the case $\ell\neq0$. The parity of $r$ plays a role in this case, so we write $r=2m$ or $r=2m\pm1$ for convenience.
Note that the results are trivial for $r=0$, since $\G_{a,b}^{\ge 0,\ell}=\G_{a,b}$ for any $\ell$, and so $G_{a,b}^{\ge 0,\ell}(q)$ is already given by Lemma~\ref{lem:qbin}.

\begin{theorem}\label{thm:line}
Let $a,b,m\ge0$, and let $\ell\in\mathbb{Z}\setminus\{0\}$.
If $0<\ell<a-b$, then
\begin{equation}\label{eq:0<l<a-b}
G_{a,b}^{\ge 2m+1,\ell}(q)=G_{a,b}^{\ge 2m,\ell}(q)=q^{m(2m+1+\ell)}\qbin{a+b}{a+2m}.
\end{equation}
If $0>\ell>a-b$, then
\begin{equation}\label{eq:0>l>a-b}
G_{a,b}^{\ge 2m+1,\ell}(q)=G_{a,b}^{\ge 2m,\ell}(q)=q^{m(2m-1-\ell)}\qbin{a+b}{a-2m}.
\end{equation} 
If $0>\ell<a-b$ and $m\ge1$, then
\begin{equation}\label{eq:0>l<a-b}
G_{a,b}^{\ge 2m,\ell}(q)=G_{a,b}^{\ge 2m-1,\ell}(q)=q^{m(2m-1-\ell)}\qbin{a+b}{a+2m-1-\ell}.
\end{equation}
If $0<\ell>a-b$ and $m\ge1$, then
\begin{equation}\label{eq:0<l>a-b}
G_{a,b}^{\ge 2m,\ell}(q)=G_{a,b}^{\ge 2m-1,\ell}(q)=q^{(m-1)(2m-1+\ell)}\qbin{a+b}{a-2m+1-\ell}.
\end{equation}
If $0<\ell=a-b$, then
\begin{equation}\label{eq:0<l=a-b}
G_{a,b}^{\ge 2m,\ell}(q)=q^{m(2m+1+\ell)}\qbin{a+b}{a+2m}, \quad G_{a,b}^{\ge 2m+1,\ell}(q)=q^{m(2m+1+\ell)}\qbin{a+b}{a+2m+1}.
\end{equation}
If $0>\ell=a-b$, then
\begin{equation}\label{eq:0>l=a-b}
G_{a,b}^{\ge 2m,\ell}(q)=q^{m(2m-1-\ell)}\qbin{a+b}{a-2m}, \quad G_{a,b}^{\ge 2m+1,\ell}(q)=q^{(m+1)(2m+1-\ell)}\qbin{a+b}{a-2m-1}.
\end{equation} 
\end{theorem}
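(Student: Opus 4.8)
The plan is to prove each of the seven cases by exhibiting an explicit bijection from $\G_{a,b}^{\ge r,\ell}$ onto an unconstrained set $\G_{a',b'}$ (equivalently $\P_{A\to B}$ for a suitable endpoint $B$), and then to read off the answer from Lemma~\ref{lem:qbin}. The case split is dictated entirely by the positions of the start height $0$ and the end height $a-b$ relative to the line $y=\ell$. When $0$ and $a-b$ lie on opposite sides of $\ell$ (the cases $0<\ell<a-b$ and $0>\ell>a-b$), every path in $\G_{a,b}$ crosses $y=\ell$ an odd number of times, so the thresholds $2m$ and $2m+1$ define the same set; when they lie on the same side (the cases $0>\ell<a-b$ and $0<\ell>a-b$), the number of crossings is even and the thresholds $2m-1$ and $2m$ coincide. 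This explains the paired equalities in the statement and reduces the work to one threshold per configuration. The two cases $\ell=a-b$ are genuine boundary cases: the endpoint lies \emph{on} the line, the parity of the crossing number is no longer forced, and the even and odd thresholds must be treated separately, which is why \eqref{eq:0<l=a-b} and \eqref{eq:0>l=a-b} each contain two different formulas.

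I would build these bijections by iterating a single local move, one of the corner-flipping bijections of Section~\ref{sec:ingredients}, rather than reflecting a contiguous sub-path as in the classical reflection principle; reflecting a segment reverses its step order and scrambles $\maj$, whereas a corner-flip changes $\maj$ by a controlled amount. Concretely, in the representative case $0<\ell<a-b$ with $r=2m+1$, a path crosses $y=\ell$ in the pattern up, down, up, $\dots$, up, and between each downward crossing and the following upward crossing it makes an excursion below the line; there are exactly $m$ such excursions among the first $2m+1$ crossings. The bijection converts the descending runs carrying these $m$ excursions into ascending runs, eliminating two crossings apiece and in total turning $2m$ down-steps into up-steps; this pushes the endpoint up to height $a-b+4m$ and yields the unconstrained set $\G_{a+2m,\,b-2m}$, counted by $\qbin{a+b}{a+2m}$. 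The remaining configurations are handled by the analogous move (flipping above-line excursions, or flipping toward the line), which accounts for the $\ell$-dependent lower indices $a\pm(2m-1)-\ell$ in \eqref{eq:0>l<a-b} and \eqref{eq:0<l>a-b}, where the endpoint displacement itself depends on $\ell$.

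The heart of the proof is the bookkeeping of the major index. I would show that each successive flip changes $\maj$ by a fixed increment: in the case above, the $i$-th flip removes a valley sitting at position $\ell+4i-1$, so the total shift telescopes to $\sum_{i=1}^{m}(\ell+4i-1)=m(2m+1+\ell)$, exactly the exponent in \eqref{eq:0<l<a-b}; every other case reduces to the same kind of arithmetic sum. A useful internal check is the degenerate value $m=0$ (equivalently $r=0$), where there are no flips, the shift is $0$, and the formula collapses to Lemma~\ref{lem:qbin}, as it must. The boundary cases $\ell=a-b$ are computed in the same way, but because the terminal vertex lies on the line one performs one extra or one fewer flip according to the parity of $r$, which is precisely what separates the two formulas in \eqref{eq:0<l=a-b} and \eqref{eq:0>l=a-b}.

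The main obstacle, and where the real content lies, is establishing that the iterated corner-flip is genuinely a bijection and that its effect on $\maj$ is exactly as claimed in every regime. Reversibility is the delicate point: given an image path carrying no crossing constraint, one must locate unambiguously the runs to unflip, show that the unflipping reconstructs the crossings in the correct order, and verify that the process does not depend on incidental touches of $y=\ell$ (valleys or peaks lying on the line that are not crossings). Pinning down this inverse, while simultaneously checking that the per-flip major-index increment is constant and independent of the rest of the path, is the crux; once it is in hand, the parity discussion above together with Lemma~\ref{lem:qbin} assembles all seven formulas with only routine arithmetic. I expect the bijections of Section~\ref{sec:ingredients} to be designed precisely so that these two properties — clean invertibility and a predictable $\maj$-shift — hold by construction, reducing the proof of Theorem~\ref{thm:line} to counting flips and summing their contributions in each of the seven regimes.
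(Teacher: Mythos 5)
Your overall strategy is the paper's: reduce via parity to a single threshold per configuration, then compose the corner-flipping maps of Section~\ref{sec:ingredients} at successive crossings to reach an unconstrained set $\G_{a',b'}$, telescoping the $\maj$-shifts and finishing with Lemma~\ref{lem:qbin}; your sum $\sum_{i=1}^{m}(\ell+4i-1)=m(2m+1+\ell)$ is exactly the paper's. There are, however, two places where your description of the mechanism diverges from what actually works, and they are precisely where the content of the proof lies.

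First, the granularity of the flips. The paper does not flip one below-line excursion (two crossings) at a time. It defines $\ta_r$ and $\si_r$ (Definition~\ref{def:tarsir}) by splitting $P$ at the $r$th crossing \emph{from the right} and applying $\ta$ or $\si$ to the \emph{entire prefix} --- reflecting all of its valleys (resp.\ peaks) across the reference line --- and then alternates $\ta_{2m},\si_{2m-1},\ta_{2m-2},\dots$, consuming one crossing per step. The individual increments are \emph{not} constant: $\ta_r$ preserves $\maj$ while $\si_r$ shifts it by $\ell'-1$, where $\ell'$ is the current (changing) start height (Lemma~\ref{lem:maj-tarsir}); only the alternating composition telescopes to $m(2m+1+\ell)$. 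Your picture of ``the $i$-th flip removes a valley sitting at position $\ell+4i-1$'' is not how the shift arises, and a genuinely local excursion-flip (reflecting a sub-path between two consecutive crossings) would neither change the step counts as you claim nor control $\maj$ --- it is exactly the reflection-principle failure mode you correctly rule out at the start. Invertibility, which you flag as the crux, is immediate in the paper's setup because $\ta$ and $\si$ are involutions on prefixes cut at a crossing that the map preserves; it would be genuinely problematic for a local excursion-flip. Second, the boundary cases $\ell=a-b$ are not settled by ``one extra or one fewer flip'': since the endpoint lies on the line, the paper must split $\G^{\ge r,\ell}_{a,b}$ according to the \emph{last step} of the path (which fixes the parity of the crossing number), run different alternating compositions on the two halves, and then apply one additional unsubscripted $\ta$ or $\si$ to one half so that the two images land in the complementary subsets $\G^{D}_{a',b'}$ and $\G^{U}_{a',b'}$ of a single target set. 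Neither issue makes your plan unworkable, but as written the proposal leaves open exactly the steps that make the theorem nontrivial.
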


We remark that the set $\G_{a,b}^{\ge r,\ell}$ is in trivial bijection with $\G_{b,a}^{\ge r,-\ell}$ (by reflecting the paths along the $x$-axis),
with $\G_{b,a}^{\ge r,\ell-a+b}$ (by reflecting the paths along a vertical line and translating appropriately), and with $\G_{a,b}^{\ge r,-\ell+a-b}$ (by composing both reflections, which is equivalent to rotating the paths by $180^\circ$). However, none of these  bijections changes $\maj$ in a consistent way unless the number of valleys or the last step of the path are fixed. Thus, the different cases in Theorems~\ref{thm:xaxis}  and~\ref{thm:line} cannot be trivially derived from each other even when the sets of paths are related by these reflections. Similarly, there is no obvious way to deduce Theorem~\ref{thm:line} from Theorem~\ref{thm:xaxis} by dettaching the portion of the path before the first crossing of the line $y=\ell$, since the removal of this prefix affects the major index inconsitently.

We will give a bijective proof of Theorems~\ref{thm:xaxis} and~\ref{thm:line} in Section~\ref{sec:line-proofs}, using some ingredients that we introduce in Section~\ref{sec:ingredients}.

\subsection{Pairs of paths crossing each other}\label{sec:pairs}

Next we enumerate pairs of paths according to the sum of their major indices and to the number of times that they cross each other. For convenience, we will consider paths with $N$ and $E$ steps for this problem.
A {\em crossing} of two paths $P$ and $Q$ is defined to be a common vertex $C$ such that 
\begin{itemize}
\item $P$ and $Q$ disagree in the step arriving at $C$, and they disagree again in some step after $C$;
\item at the first step after $C$ where $P$ and $Q$ disagree again, each path has the same type of step ($N$ or $E$) as it had when arriving at $C$.
\end{itemize}
See Figure~\ref{fig:crossing} for some examples.
Note that two paths can intersect (that is, have common vertices) without crossing. Let $\cro(P,Q)$ denote the number of crossings of paths $P$ and $Q$; see Figure~\ref{fig:pair} for an example. For $A_1,A_2,B_1,B_2\in\Z^2$ and $r\ge0$, we use the following notation for pairs of paths having at least $r$ crossings, where $\{\de,\bde\}=\{1,2\}$:
$$\pathsP{A_1}{B_\de}{A_2}{B_\bde}{r}=\{(P,Q):P\in\P_{A_1\to B_\de},Q\in\P_{A_2\to B_\bde},\cro(P,Q)\ge r\}.$$
Note that $\pathsP{A_1}{B_\de}{A_2}{B_\bde}{0}=\P_{A_1\to B_\de}\times\P_{A_2\to B_\bde}$; we denote this set simply by $\paths{A_1}{B_\de}{A_2}{B_\bde}$.

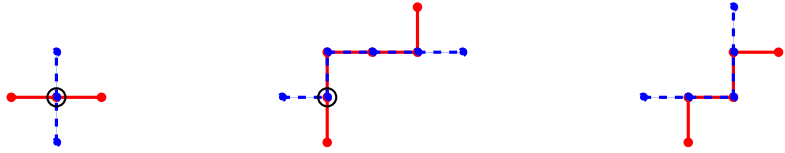
\begin{figure}[htb]
\centering
\begin{tikzpicture}[scale=0.6]
    \draw[red,very thick,fill](0,1) \start\E\E;
    \draw[blue,dashed,very thick,fill](1,0) \start\N\N;
\crossing{1}{1}
    \draw[red,very thick,fill](7,0) \start\N\N\E\E\N;
    \draw[blue,dashed,very thick,fill](6,1)\start\E\N\E\E\E;
\crossing{7}{1}    
    \draw[red,very thick,fill](15,0) \start\N\E\N\E;
    \draw[blue,dashed,very thick,fill](14,1)\start\E\E\N\N;
\end{tikzpicture}
\caption{Two examples of crossings, circled in black, and a pair of paths that do not cross (right).}
\label{fig:crossing}
\end{figure}

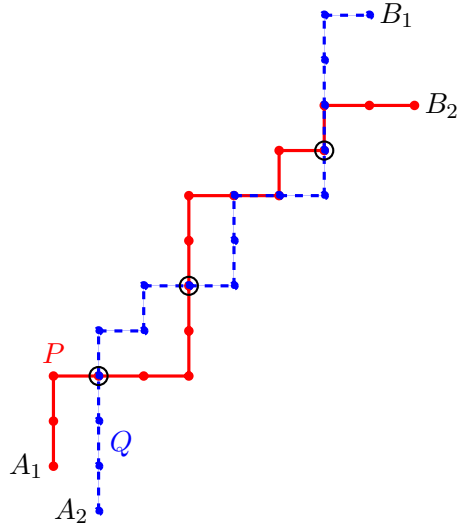
\begin{figure}[htb]
\centering
\begin{tikzpicture}[scale=0.6]
    \draw[red,very thick,fill](0,1) \start\N\N\E\E\E\N\N\N\N\E\E\N\E\N\E\E;
    \draw[blue,dashed,very thick,fill](1,0) \start\N\N\N\N\E\N\E\E\N\N\E\E\N\N\N\N\E;
\crossing{1}{3};\crossing{3}{5};\crossing{6}{8};
	\draw (0,1) node[left] {$A_1$};
	\draw (1,0) node[left] {$A_2$};     
	\draw (7,11) node[right] {$B_1$};
	\draw (8,9) node[right] {$B_2$};     
	    \draw[red] (0,3.5) node {$P$} ;
    \draw[blue] (1.5,1.5) node {$Q$} ;
\end{tikzpicture}
\caption{A pair of paths with $\cro(P,Q)=3$.}
\label{fig:pair}
\end{figure}

The next theorem enumerates such pairs of paths. Note that, when there is no requirement on the number of crossings, the enumeration is trivial, since
\begin{equation}
\label{eq:m=0noq}
\left|\paths{A_1}{B_\de}{A_2}{B_\bde}\right|=
\left|\P_{A_1\to B_\de}\right|\cdot\left|\P_{A_2\to B_\bde}\right|
=\binom{u_\de-x_1+v_\de-y_1}{u_\de-x_1}\binom{u_\bde-x_2+v_\bde-y_2}{u_\bde-x_2},
\end{equation}
where $A_1=(x_1,y_1)$, $A_2=(x_2,y_2)$, $B_1=(u_1,v_1)$ and $B_2=(u_2,v_2)$.
We use the notation $A_1\prec A_2$ to indicate that $A_1$ is strictly northwest of $A_2$, that is, $x_1<x_2$ and $y_1>y_2$.

\begin{theorem}\label{thm:pairs}
Let $A_1=(x_1,y_1)$, $A_2=(x_2,y_2)$, $B_1=(u_1,v_1)$, $B_2=(u_2,v_2)$ be points in $\Z^2$, where $A_1\prec A_2$ and $B_1\prec B_2$.
Then, for all $m\ge0$,
\begin{equation}\label{eq:switched-noq}
\left|\pathsP{A_1}{B_2}{A_2}{B_1}{2m+1}\right|=\left|\pathsP{A_1}{B_2}{A_2}{B_1}{2m}\right|=\binom{u_2-x_1+v_2-y_1}{u_2-x_1+2m}\binom{u_1-x_2+v_1-y_2}{u_1-x_2-2m},
\end{equation}
and for all $m\ge1$,
\begin{equation}\label{eq:same-noq}
\left|\pathsP{A_1}{B_1}{A_2}{B_2}{2m}\right|=\left|\pathsP{A_1}{B_1}{A_2}{B_2}{2m-1}\right|
=\binom{u_2-x_1+v_2-y_1}{u_2-x_1+2m-1}\binom{u_1-x_2+v_1-y_2}{u_1-x_2-2m+1}.
\end{equation}
Let now $A=(x,y)$ and $B=(u,v)$ be points in $\Z^2$. Then, for all $r\ge0$,
\begin{align}
\label{eq:A1=A2-noq}
\left|\pathsP{A}{B_1}{A}{B_2}{r}\right|&=\binom{u_2-x+v_2-y}{u_2-x+r}\binom{u_1-x+v_1-y}{u_1-x-r},\\
\label{eq:B1=B2-noq}
\left|\pathsP{A_1}{B}{A_2}{B}{r}\right|&=\binom{u-x_1+v-y_1}{u-x_1+r}\binom{u-x_2+v-y_2}{u-x_2-r},\\
\label{eq:A1=A2,B1=B2-noq}
\left|\pathsP{A}{B}{A}{B}{r}\right|&
=\begin{cases}\displaystyle\binom{u-x+v-y}{u-x}^2& \text{if }r=0,\\
\displaystyle2\sum_{j\ge1}(-1)^{j-1}\binom{u-x+v-y}{u-x+r+j}\binom{u-x+v-y}{u-x-r-j}
& \text{if }r\ge1.
\end{cases}
\end{align}
\end{theorem}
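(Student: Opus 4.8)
The plan is to construct, for each pair of endpoint configurations, an explicit involution or bijection on the set of pairs of paths that trades crossings for a reflection-type rearrangement, in the spirit of the reflection principle but adapted so that the count of pairs with at least $r$ crossings is expressed directly in terms of unconstrained pairs counted by \eqref{eq:m=0noq}. Concretely, for the ``switched-endpoint'' case \eqref{eq:switched-noq} where $(P,Q)\in\pathsP{A_1}{B_2}{A_2}{B_1}{r}$, I would locate a distinguished crossing (say the $r$-th crossing $C$ counted from the start) and swap the tails of the two paths after $C$. Because a crossing is defined so that each path resumes with the same step type it arrived with, this tail-swap sends a pair with $\ge r$ crossings and switched endpoints to a pair with $\ge r$ crossings and some controlled endpoint data; iterating/composing such swaps should collapse the crossing requirement into a shift of one binomial index by $\pm r$, yielding the product $\binom{u_2-x_1+v_2-y_1}{u_2-x_1+2m}\binom{u_1-x_2+v_1-y_2}{u_1-x_2-2m}$ (with the parity jump $2m+1\to 2m$ reflecting that an odd crossing can always be promoted or demoted by one because after an odd number of crossings the paths are on opposite sides, forcing another crossing before the switched endpoints can be reached).

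The cases \eqref{eq:A1=A2-noq} and \eqref{eq:B1=B2-noq}, where the two paths share a start point $A$ (resp.\ an end point $B$), I would handle as specializations or limiting instances of the two-endpoint argument: when $A_1=A_2=A$, the starting vertex is itself a forced agreement, and the same tail-swapping bijection at the $r$-th crossing carries $\pathsP{A}{B_1}{A}{B_2}{r}$ onto $\paths{A}{B_1'}{A}{B_2'}{}$ for shifted endpoints $B_i'$, giving the unsigned product with index shift $\pm r$ and explaining why the parity collapse disappears here (sharing an endpoint removes the obstruction that created the $2m$/$2m+1$ coupling). I would verify that the shifted binomial indices come out exactly as $u_2-x+r$ and $u_1-x-r$ by tracking how the swap moves the effective endpoints horizontally by $r$ along the anti-diagonal.

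The genuinely hard case is \eqref{eq:A1=A2,B1=B2-noq}, the fully diagonal situation $A_1=A_2=A$, $B_1=B_2=B$, where both a shared start and a shared end are present. Here a single tail-swap no longer lands in a disjoint endpoint configuration, so the naive bijection fails to be a clean bijection and instead produces overcounting that must be corrected by inclusion--exclusion; this is the source of the alternating sum $\sum_{j\ge1}(-1)^{j-1}$. I expect the main obstacle to be setting up this inclusion--exclusion rigorously: one must argue that swapping tails at successive crossings gives a sign-reversing involution on the ``error terms,'' so that pairs with a given crossing count are counted with the correct signed multiplicity, and that the telescoping leaves precisely the stated alternating series. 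The plan is to define, on the set of pairs $(P,Q)$ with both endpoints shared and at least one crossing, an involution $\phi$ that swaps tails at the \emph{first} crossing; $\phi$ exchanges the roles of $P$ and $Q$ but preserves $A$ and $B$, and the fixed-point/orbit analysis of $\phi$, combined with iterated reflections that successively shift the index by an additional $\pm1$ per application, produces the $j$-indexed cancellation. I would then confirm the $r=0$ value $\binom{u-x+v-y}{u-x}^2$ separately as the trivial product from \eqref{eq:m=0noq}, since no crossing need exist in that case and the involution argument applies only for $r\ge1$.

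Throughout, the unifying technical point I would emphasize is that the crossing condition's second bullet---each path resuming with its original step type---is exactly what makes tail-swapping at a crossing well defined and bijective, preserving the crossing count below the swap point while cleanly reassigning the portion above it; the whole scheme is a $\maj$-free ($q=1$) shadow of the corner-flipping bijections promised for Section~\ref{sec:ingredients}, so I would ultimately prefer to deduce Theorem~\ref{thm:pairs} as the specialization at $q=1$ of the refined $q$-weighted statements, letting those bijections (rather than ad hoc reflections) supply the rigorous backbone and automatically yield the correct index shifts and the inclusion--exclusion signs.
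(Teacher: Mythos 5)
Your overall strategy---iterated swapping at a distinguished crossing to collapse the crossing requirement, inclusion--exclusion for the doubly degenerate case, and ultimately deducing the count from a $q$-refined bijective framework---is the right one, and in fact the paper executes exactly this plan (it proves Theorem~\ref{thm:pairs} and Theorem~\ref{thm:pairs_refined} simultaneously via the corner-flipping maps, and separately gives a pure prefix-swapping proof in Section~\ref{sec:noq}). But as written, your central operation has two concrete gaps. First, a plain tail-swap at a crossing $C$ of $(P,Q)$ produces a pair with the \emph{same} four endpoints, merely reassigned between the two paths; iterating such swaps can therefore never move the binomial indices, yet the target formulas \eqref{eq:switched-noq}--\eqref{eq:B1=B2-noq} require the indices to shift by $\pm r$. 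The shift has to come from somewhere, and in the paper it comes from moving the \emph{starting} points to $A_1-\vv$ and $A_2+\vv$ at each step: either via the corner-flipping maps $\bt,\bs$ inside $\phiC$ (Definition~\ref{def:phiC}), which replace the prefix up to $C$ by the path whose peaks sit at its valleys, or via conjugating the classical prefix-swap by the translation $T$ (the map $\bom$ of Section~\ref{sec:noq}). Your remark that you would ``verify that the shifted binomial indices come out exactly as $u_2-x+r$ and $u_1-x-r$'' is precisely the step that cannot be verified for an untranslated swap. Second, invertibility: swapping tails at the $r$th crossing destroys that crossing and leaves only an ordinary intersection there, so the image pair does not remember where the cut was made and the map is not evidently a bijection. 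The paper's $\phiC$ avoids this by leaving the suffixes after $C$ untouched (only the prefixes are exchanged and corner-flipped), so that $C_1,\dots,C_r$ remain crossings of the image and $C_r$ is canonically recoverable; the map $\bom$ avoids it by recovering the cut point as the \emph{first} intersection of the translated pair.

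For the case $A_1=A_2$, $B_1=B_2$, your instinct that an alternating sum must arise from some cancellation is correct, but the sketch does not identify the mechanism. The paper obtains it not from a sign-reversing involution on ``error terms'' but from the two-term recurrence
$\left|\pathsP{A}{B}{A}{B}{r}\right|+\left|\pathsP{A}{B}{A}{B}{r+1}\right|
=2\binom{u-x+v-y}{u-x+r+1}\binom{u-x+v-y}{u-x-r-1}$,
which follows because the pairs arriving at $C_r$ with $(N,E)$ make up exactly half of each set, and the composed bijections carry the union of the $r$ and $r+1$ levels onto an unconstrained set of pairs with endpoints shifted by $(r+1)\vv$; iterating this recurrence telescopes into the stated series. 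Finally, note that deferring to ``the specialization at $q=1$ of the refined statements'' is circular in a blind proof unless you actually construct those bijections; the missing ingredient in your proposal is precisely their content, namely the valley-to-peak corner flip that shifts a path's starting point along the anti-diagonal while controlling both the crossing structure and (in the refined version) the major index.
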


The only case in which the formula given by Theorem~\ref{thm:pairs} is not a product of binomial coefficients is when both endpoints of the paths coincide, i.e., Equation~\eqref{eq:A1=A2,B1=B2-noq}. An alternative expression for this case, with a different number of summands, 
will be provided in Equation~\eqref{eq:A1=A2,B1=B2-noq-alt}.

As we will see in Section~\ref{sec:noq}, Theorem~\ref{thm:pairs} can be proved using a bijection that repeatedly swaps the prefixes of the paths up until, and including the step right after, the first crossing.
This is similar to the prefix-swapping method in the standard proof of the Lindstr\"om--Gessel--Viennot determinantal formula counting non-intersecting tuples of paths~\cite{Lin,GV}.

As in Section~\ref{sec:line}, we are interested in the refined enumeration by the major index. In this case, the relevant statistic is the sum of the major indices of the two paths, which we refer to as the {\em total major index}.
For $A_1,A_2,B_1,B_2\in\Z^2$ and $r\ge0$, define the polynomials
$$H^{\ge r}_{A_1\to B_\de,A_2\to B_\bde}(q)= \sum_{(P,Q)\in\pathsP{A_1}{B_\de}{A_2}{B_\bde}{r}} q^{\maj(P)+\maj(Q)}.$$
The polynomials counting pairs of paths that cross each other exactly $r$ times can be obtained from these as
$$H^{=r}_{A_1\to B_\de,A_2\to B_\bde}(q)=H^{\ge r}_{A_1\to B_\de,A_2\to B_\bde}(q)-H^{\ge r+1}_{A_1\to B_\de,A_2\to B_\bde}(q).$$

To state the expressions for these polynomials, it is convenient to define the following function of $A_1=(x_1,y_1)$, $A_2=(x_2,y_2)$, $B_1=(u_1,v_1)$, $B_2=(u_2,v_2)$, and $r$: 
\begin{equation}\label{eq:fr}
f_{r,A_1,A_2,B_2,B_1}(q)=q^{r(r+x_2-x_1)}\qbin{u_2-x_1+v_2-y_1}{u_2-x_1+r}\qbin{u_1-x_2+v_1-y_2}{u_1-x_2-r}.
\end{equation}
When there is no requirement on the number of crossings, Lemma~\ref{lem:qbin} immediately gives
\begin{equation}\label{eq:m=0}
H^{\ge 0}_{A_1\to B_\de,A_2\to B_\bde}(q)=\qbin{u_\de-x_1+v_\de-y_1}{u_\de-x_1}\qbin{u_\bde-x_2+v_\bde-y_2}{u_\bde-x_2}=f_{0,A_1,A_2,B_\de,B_\bde}(q)
\end{equation}
for arbitrary endpoints, since the two paths can be chosen independently.

In addition to the hypotheses from Theorem~\ref{thm:pairs}, the refinement by major index requires that the initial points $A_1$ and $A_2$ lie on the same line of slope $-1$.

\begin{theorem}\label{thm:pairs_refined}
Let $A_1=(x_1,y_1)$, $A_2=(x_2,y_2)$, $B_1=(u_1,v_1)$ and $B_2=(u_2,v_2)$ be points in $\Z^2$, where $A_1\prec A_2$ and $B_1\prec B_2$. Suppose additionally that 
\begin{equation}
\label{condition}
x_1+y_1=x_2+y_2.
\end{equation} 
Then, for all $m\ge0$,
\begin{equation}\label{eq:switched}
H^{\ge 2m+1}_{A_1\to B_2,A_2\to B_1}(q)=H^{\ge 2m}_{A_1\to B_2,A_2\to B_1}(q)=f_{2m,A_1,A_2,B_2,B_1}(q),
\end{equation}
and for all $m\ge1$,
\begin{equation}\label{eq:same}
H^{\ge 2m}_{A_1\to B_1,A_2\to B_2}(q)=H^{\ge 2m-1}_{A_1\to B_1,A_2\to B_2}(q)=f_{2m-1,A_1,A_2,B_2,B_1}(q).
\end{equation}
Let now $A=(x,y)$ and $B=(u,v)$ be points in $\Z^2$. Then, for all $r\ge0$,
\begin{align}\label{eq:A1=A2}
H^{\ge r}_{A\to B_1,A\to B_2}(q)&=f_{r,A,A,B_2,B_1}(q),\\
\label{eq:B1=B2}
H^{\ge r}_{A_1\to B,A_2\to B}(q)&=f_{r,A_1,A_2,B,B}(q),\\
\label{eq:A1=A2,B1=B2}
H^{\ge r}_{A\to B,A\to B}(q)&=\begin{cases}f_{0,A,A,B,B}(q) & \text{if }r=0,\\
2\sum_{j\ge1}(-1)^{j-1}f_{r+j,A,A,B,B}(q) & \text{if }r\ge1.
\end{cases}
\end{align}
\end{theorem}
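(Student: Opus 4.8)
The plan is to prove Theorem~\ref{thm:pairs_refined} by constructing a crossing-based bijection that is the $\maj$-refined analogue of the prefix-swapping argument used for Theorem~\ref{thm:pairs}. The central object will be the operation that, given a pair $(P,Q)$ with at least one crossing, locates the \emph{first} crossing $C$ (in the order induced by position along the paths), and swaps the two prefixes up to and including the step immediately after $C$, leaving the suffixes intact. First I would verify that this operation is well-defined and involutive on the appropriate set of pairs, and—crucially—that it interacts with the total major index in a controlled way. The reason condition~\eqref{condition}, namely $x_1+y_1=x_2+y_2$, is imposed is precisely to make this control possible: when the two initial points lie on the same anti-diagonal, swapping prefixes at a common vertex $C$ exchanges two path-segments whose endpoints differ only by a translation along a line of slope $-1$, so the positions of the valleys in each prefix shift by a predictable amount. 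I expect the effect on $\maj(P)+\maj(Q)$ to be a clean shift by $2m$-dependent quantity, matching the power of $q$ appearing in $f_{r,A_1,A_2,B_2,B_1}(q)$ in~\eqref{eq:fr}.

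Next I would reduce the various cases to a single uniform statement. The identities~\eqref{eq:switched} and~\eqref{eq:same} assert that requiring $\ge 2m+1$ crossings gives the same generating function as requiring $\ge 2m$ (and likewise for the shifted parity), which is exactly the signature of an \emph{involution} that pairs up configurations with an odd number of "extra" crossings against those with an even number, or equivalently a sign-reversing/weight-preserving cancellation on pairs with crossing count strictly between consecutive thresholds. Concretely, I would show that the prefix-swap at the first crossing gives a bijection between $\pathsP{A_1}{B_2}{A_2}{B_1}{2m}$ and $\pathsP{A_1}{B_1}{A_2}{B_2}{2m+1}$-type sets that decrements the crossing count by exactly one while redirecting the endpoints; iterating this $2m$ (respectively $2m-1$) times peels off all the forced crossings and lands in a set with \emph{no} crossing constraint, whose generating function is computed by Lemma~\ref{lem:qbin} via~\eqref{eq:m=0}. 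Tracking the cumulative $q$-shift across the iterations should reproduce the exponent $r(r+x_2-x_1)$ in~\eqref{eq:fr}; the telescoping of these shifts, together with the binomial-type identity relating the $q$-binomial coefficients before and after the endpoint swap, is where the bulk of the bookkeeping lives.

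For the equal-endpoint cases I would treat~\eqref{eq:A1=A2} and~\eqref{eq:B1=B2} as degenerations of the generic case: when $A_1=A_2$ or $B_1=B_2$ the two "colorings" of the path pair partially coincide, but the first-crossing prefix swap still applies verbatim since a crossing is defined intrinsically by where the paths disagree, and condition~\eqref{condition} is automatic when $A_1=A_2$. The genuinely different case is~\eqref{eq:A1=A2,B1=B2}, where both endpoints coincide; here the prefix-swap involution has an extra symmetry because the two sets $\P_{A\to B_\de}\times\P_{A\to B_\bde}$ for $\{\de,\bde\}=\{1,2\}$ are literally the same set, which produces the factor of $2$ and forces an inclusion–exclusion over the number of crossings. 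I would set up a signed sum in which each term $f_{r+j,A,A,B,B}(q)$ counts pairs with at least $r+j$ crossings with sign $(-1)^{j-1}$, and argue that the prefix-swap involution cancels all contributions except those surviving in the alternating tail, yielding the series $2\sum_{j\ge1}(-1)^{j-1}f_{r+j,A,A,B,B}(q)$.

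The main obstacle will be the $\maj$-bookkeeping under prefix swapping. Unlike the pure-cardinality statement of Theorem~\ref{thm:pairs}, where swapping prefixes at a crossing is an obvious bijection, here I must prove that the swap changes $\maj(P)+\maj(Q)$ by an \emph{exactly determined} amount depending only on the data of the crossing and the translation $x_2-x_1$, not on the internal structure of the prefixes. A valley of $P$ can cease to be a valley (or a new valley can appear) at the junction where the swapped prefix meets the retained suffix—in particular at the vertex $C$ and at the step right after it—so I would need a careful local analysis of the step-pair straddling the cut, using the defining condition of a crossing (that each path resumes with the same step-type it arrived with) to guarantee that no valley is created or destroyed at the cut point itself, and that the only net change in total major index comes from the rigid re-indexing of valley positions caused by moving a prefix from one starting anti-diagonal point to the other. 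This is exactly the point where condition~\eqref{condition} earns its keep, and making this local calculation airtight—uniformly across all the sign and parity cases—is the crux of the proof.
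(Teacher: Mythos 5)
Your plan rests on the claim that swapping the prefixes at the first crossing changes $\maj(P)+\maj(Q)$ by an amount determined solely by $x_2-x_1$, with ``no valley created or destroyed at the cut point itself.'' That claim is false, and it is precisely the obstruction this paper is built to circumvent. The swap unavoidably changes the type of the step with which the new path arrives at the junction vertex: $P$ and $Q$ arrive at the crossing with opposite step types, so after the exchange the retained suffix of $P$ is preceded by a step of the opposite type from before. Whether a valley sits at the junction therefore depends on the first step of that suffix, which the crossing condition does not control. Concretely, take $A_1=(0,1)$, $A_2=(1,0)$ (so \eqref{condition} holds), $P$ beginning $EE$ and $Q$ beginning $NN$, with a crossing at $(1,1)$: after the swap the total major index changes by $-2$, $0$, or $+2$ according to whether the next steps of the two suffixes are $(N,E)$, matching, or $(E,N)$. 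Hence the cumulative $q$-shift does not telescope to $r(r+x_2-x_1)$ and the iteration collapses. This is exactly why the paper emphasizes that the prefix-swapping maps $\xi$, $\xi'$, $\om$ of Section~\ref{sec:noq} suffice for Theorem~\ref{thm:pairs} but ``do not behave well with respect to the total major index'' and cannot prove Theorem~\ref{thm:pairs_refined}.

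The paper's mechanism is different in kind, not merely in bookkeeping: instead of moving $Q$'s prefix onto $P$'s suffix, the map $\phiC$ replaces the prefix $P_\lt\in\P_{A_1\to C}^N$ by $\bt(Q_\lt)$, where $\bt$ is the corner-flipping bijection sending a path ending in $E$ to the path with the \emph{same} endpoint whose peaks are the old valleys, hence ending in $N$. Because the replacement prefix arrives at $C$ with the same step type as the prefix it replaces, the junction is seamless, no valley appears or disappears at or after $C$, and the change in $\maj$ is the exactly computable quantity of Lemma~\ref{lem:maj}, independent of the suffixes; condition~\eqref{condition} then plays only the role you assign it (translation-invariance of the prefix contributions along the anti-diagonal), but it cannot by itself repair the junction defect of a genuine swap. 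A secondary point: for \eqref{eq:A1=A2,B1=B2} the paper does not run a sign-reversing involution over crossing counts, but instead derives the recurrence $H^{\ge r}_{A\to B,A\to B}(q)+H^{\ge r+1}_{A\to B,A\to B}(q)=2f_{r+1,A,A,B,B}(q)$ and iterates it, so that part of your outline would also need to be reworked.
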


All the formulas in Theorem~\ref{thm:pairs_refined} consist of a product of two $q$-binomial coefficients and a power of $q$,
with the exception of Equation~\eqref{eq:A1=A2,B1=B2} for $r\ge1$. An alternative expression for this case will be given in Equation~\eqref{eq:A1=A2,B1=B2-alt}.

Note that condition~\eqref{condition} is equivalent to $x_2-x_1=y_1-y_2$, and to the fact that $A_1$ and $A_2$ lie on the same line of slope $-1$. When it holds, the term $q^{r(r+x_2-x_1)}$ in Equation~\eqref{eq:fr} can also be written as $q^{r(r+y_1-y_2)}$.

Similarly to how the argument based on the iterated reflection principle for paths crossing a line does not give a proof of Theorems~\ref{thm:xaxis} and~\ref{thm:line} with the refinement by major index, the argument based on iterated prefix-swapping that can be used to prove Theorem~\ref{thm:pairs} does not give a proof of Theorem~\ref{thm:pairs_refined}. This is because the statistic $\maj$ does not behave well when swapping fragments of the paths.

\section{Proof ingredients: the bijections $\ta$, $\si$, $\bt$, $\bs$}\label{sec:ingredients} 

The proofs of the theorems Section~\ref{sec:main} rely on repeated applications of certain bijections that we describe next. An important feature of these bijections is that they allow us to keep track of the changes in the major index of the paths. 

We first describe the bijections in terms of paths with $N$ and $E$ steps.
Let $\P_{A\to B}^E$ and $\P_{A\to B}^N$ denote the subsets of $\P_{A\to B}$ consisting of paths that end in $E$ and $N$, respectively. If we do not specify the endpoints, the union of these sets over all possible endpoints $A,B\in\Z^2$ will be denoted by $\P^E$ and $\P^N$, respectively.

Let $\rho$ be the involution on paths with $N$ and $E$ steps induced by reflecting $\Z^2$ along the diagonal $y=x$, so that the coordinates of each point are switched. Clearly, $\rho$ is a bijection between $\P_{(x,y)\to(u,v)}^E$ and $\P_{(y,x)\to(v,u)}^N$.

Let $A=(x,y)$ and $B=(u,v)$. A key observation is that paths in $\P_{A\to B}$ are uniquely determined by the coordinates of their valleys. 
Specifically, there exists a path in $\P_{A\to B}$ whose valleys are at coordinates $(c_1,d_1), (c_2,d_2), \dots, (c_k,d_k)$ if and only if 
$$x< c_1<c_2<\dots<c_k\le u \quad\text{and}\quad y\le d_1<d_2<\dots<d_k< v.$$
Additionally, such a path ends in $E$ if and only if $c_k<u$.

Similarly, paths in $\P_{A\to B}$ are uniquely determined by the coordinates of their peaks. 
There exists a path in $\P_{A\to B}$ whose peaks are at coordinates $(c_1,d_1), (c_2,d_2), \dots, (c_k,d_k)$ if and only if 
$$x\le c_1<c_2<\dots<c_k<u \quad\text{and}\quad y< d_1<d_2<\dots<d_k\le v.$$ 
Such a path ends in $N$ if and only if $d_k<v$.

Define the vector $\vv=(1,-1)$, so that $A+\vv=(x+1,y-1)$ and $A-\vv=(x-1,y+1)$.
With the above considerations, we define a map 
$$\bt:\P_{A\to B}^E\to\P_{A+\vv\to B}^N,$$
as follows. Given $P\in\P_{A\to B}^E$, let $\bt(P)$ be the path in $\P_{A+\vv\to B}^N$ whose peaks are precisely at the coordinates of the valleys of $P$. See the examples in Figures~\ref{fig:btbs} and~\ref{fig:tasirho}. 
Similarly, we define a map 
$$\bs:\P_{A\to B}^N\to\P_{A-\vv\to B}^E$$
by letting 
$\bs(P)$ be the path in $\P_{A-\vv\to B}^E$ whose valleys are precisely at the coordinates of the peaks of $P\in\P_{A\to B}^N$.

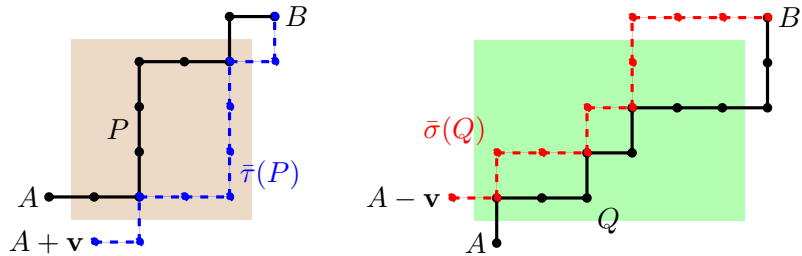
\begin{figure}[htb]
\begin{center}
 \begin{tikzpicture}[scale=0.6]
    \fill[brown!30] (.5,-.5) rectangle (4.5,3.5);
    \draw[very thick,fill](0,0) \start\E\E\N\N\N\E\E\N\E;
    \draw (2,1.5) node[left] {$P$};
    \draw[blue] (4,0.5) node[right] {$\bt(P)$};
    \draw[blue,dashed,very thick,fill](1,-1)  \start\E\N\E\E\N\N\N\E\N;
	\draw (0,0) node[left] {$A$};
	\draw (1,-1) node[left] {$A+\vv$};   
	\draw (5,4) node[right] {$B$};
\end{tikzpicture}
\quad
\begin{tikzpicture}[scale=0.6]
    \fill[green!30] (-.5,.5) rectangle (5.5,4.5);
    \draw[very thick,fill](0,0) \start\N\E\E\N\E\N\E\E\E\N\N;
    \draw (2,.5) node[right] {$Q$};
    \draw[red] (0,2.5) node[left] {$\bs(Q)$};
    \draw[red,dashed,very thick,fill](-1,1) \start\E\N\E\E\N\E\N\N\E\E\E;
	\draw (0,0) node[left] {$A$};
	\draw (-1,1) node[left] {$A-\vv$};       
	\draw (6,5) node[right] {$B$};
\end{tikzpicture}
\end{center}
\caption{Examples of the bijections $\bt$ and $\bs$. The brown rectangle (left) is the region of the possible locations of the valleys of paths in $\P_{A\to B}^E$, equivalently the peaks of paths in $\P_{A+\vv\to B}^N$.
The green rectangle (right) shows the possible locations of the peaks of paths in $\P_{A\to B}^N$, equivalently the valleys of paths in $\P_{A-\vv\to B}^E$.}
\label{fig:btbs}
\end{figure}

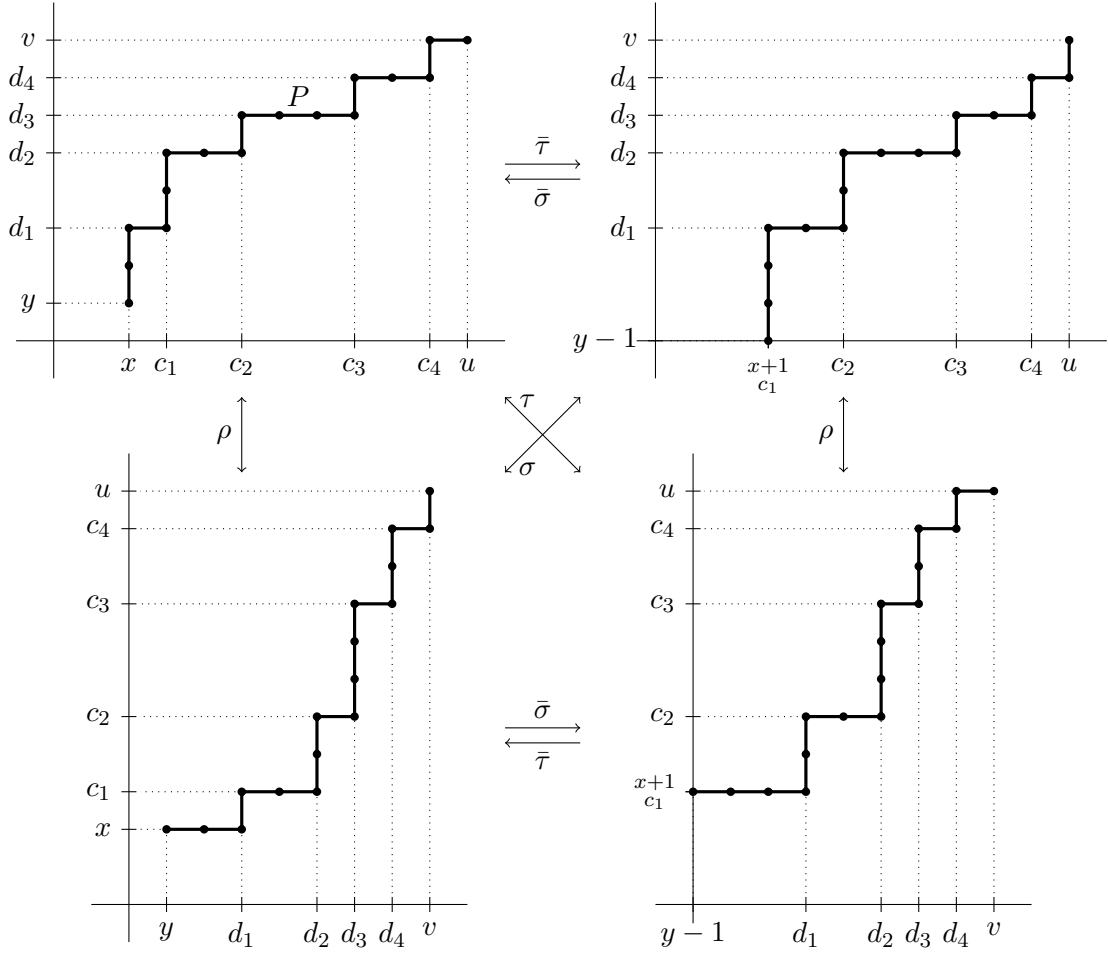
\begin{figure}[htb]
\centering
\begin{tikzpicture}[scale=.5]
\draw (-1,0)--(12,0);
\draw (0,-1)--(0,9);
\draw[very thick,fill](2,1) \start\N\N\E\N\N\E\E\N\E\E\E\N\E\E\N\E;
\markx{2}{1}{x}{y};
\markx{3}{3}{c_1}{d_1};
\markx{5}{5}{c_2}{d_2};
\markx{8}{6}{c_3}{d_3};
\markx{10}{7}{c_4}{d_4};
\markx{11}{8}{u}{v};
\draw (6.5,6.5) node {$P$};
\draw[->] (12,4.7)-- node[above]{$\bt$} (14,4.7);
\draw[<-] (12,4.3)-- node[below]{$\bs$} (14,4.3);
\draw[<->] (5,-1.5)-- node[left]{$\rho$} (5,-3.5);
\draw[<->] (12,-1.5)-- (14,-3.5); \draw (12.6,-1.6) node {$\ta$};
\draw[<->] (14,-1.5)-- (12,-3.5); \draw (12.6,-3.4) node {$\si$};

\begin{scope}[shift={(16,0)}] 
\draw (-.5,0)--(12,0);
\draw (0,-1)--(0,9);
\draw[very thick,fill](3,0) \start	\N\N\N\E\E\N\N\E\E\E\N\E\E\N\E\N;
\markx{3}{0}{\substack{x+1\\ c_1}}{y-1};
\markx{3}{3}{}{d_1};
\markx{5}{5}{c_2}{d_2};
\markx{8}{6}{c_3}{d_3};
\markx{10}{7}{c_4}{d_4};
\markx{11}{8}{u}{v};
\draw[<->] (5,-1.5)-- node[left]{$\rho$} (5,-3.5);
\end{scope}

\begin{scope}[shift={(2,-15)}] 
\draw (-1,0)--(9,0);
\draw (0,-1)--(0,12);
\draw[very thick,fill](1,2) \start\E\E\N\E\E\N\N\E\N\N\N\E\N\N\E\N;
\marky{2}{1}{x}{y};
\marky{3}{3}{c_1}{d_1};
\marky{5}{5}{c_2}{d_2};
\marky{8}{6}{c_3}{d_3};
\marky{10}{7}{c_4}{d_4};
\marky{11}{8}{u}{v};
\draw[->] (10,4.7)-- node[above]{$\bs$} (12,4.7);
\draw[<-] (10,4.3)-- node[below]{$\bt$} (12,4.3);
\end{scope}

\begin{scope}[shift={(17,-15)}] 
\draw (-1,0)--(9,0);
\draw (0,-.5)--(0,12);
\draw[very thick,fill](0,3) \start		\E\E\E\N\N\E\E\N\N\N\E\N\N\E\N\E;
\marky{3}{0}{\substack{x+1\\ c_1}}{y-1};
\marky{3}{3}{}{d_1};
\marky{5}{5}{c_2}{d_2};
\marky{8}{6}{c_3}{d_3};
\marky{10}{7}{c_4}{d_4};
\marky{11}{8}{u}{v};
\end{scope}

\end{tikzpicture}
\caption{An example of the relationships among the maps $\bt,\bs,\ta,\si,\rho$, illustrating their effect on the valleys of the paths.
The path $P$ in the upper left has valleys at $(c_i,d_i)$ for $1\le i\le 4$.}
\label{fig:tasirho}
\end{figure}

\begin{lemma}\label{lem:bsbt}
The maps
$$\bt:\P_{A\to B}^E\to\P_{A+\vv\to B}^N$$
and
$$\bs:\P_{A\to B}^N\to\P_{A-\vv\to B}^E$$
defined above are bijections. In addition, when viewed as maps between $\P^E$ and $\P^N$, $\bs$ and $\bt$ are inverses of each other, that is, $\bs(\bt(P))=P$ and $\bt(\bs(P))=P$ for all $P$ in the domain.
\end{lemma}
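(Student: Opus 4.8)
The plan is to reduce everything to the two uniqueness characterizations stated just before the lemma: a path in $\P_{A\to B}$ is determined by the coordinates of its valleys, and also by the coordinates of its peaks, with explicit inequalities describing which coordinate tuples are realizable. The strategy is to exhibit both $\bt$ and $\bs$ as composites of the ``coordinates of valleys'' and ``coordinates of peaks'' bijections, and to check that the intermediate sets of admissible coordinate tuples coincide after the shift by $\vv$.

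First I would record precisely, for $A=(x,y)$ and $B=(u,v)$, the set of realizable valley-coordinate tuples of paths in $\P_{A\to B}^E$, namely the tuples $(c_1,d_1),\dots,(c_k,d_k)$ with $x<c_1<\dots<c_k<u$ and $y\le d_1<\dots<d_k<v$ (the strict bound $c_k<u$ encoding that the path ends in $E$); and similarly the set of realizable peak-coordinate tuples of paths in $\P_{A+\vv\to B}^N$, namely those with $x+1\le c_1<\dots<c_k<u$ and $y-1<d_1<\dots<d_k<v$ (the bound $d_k<v$ encoding that the path ends in $N$). The key observation is that, over the integers, $x+1\le c_1$ is the same condition as $x<c_1$, and $y-1<d_1$ is the same as $y\le d_1$, so these two families of admissible tuples are literally equal. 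Since $\bt$ sends a path to the unique path whose peaks lie at these coordinates, it factors as the valley-coordinate bijection out of $\P_{A\to B}^E$ followed by the inverse of the peak-coordinate bijection out of $\P_{A+\vv\to B}^N$; as a composite of bijections, $\bt$ is a bijection. The argument for $\bs$ is identical after replacing $A$ by $A-\vv$ and swapping the roles of peaks and valleys, using $x\le c_1\iff x-1<c_1$ and $y<d_1\iff y+1\le d_1$.

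For the inverse statement, once $\bt$ and $\bs$ are viewed as maps between $\P^E$ and $\P^N$, I would simply track coordinates. If $P\in\P_{A\to B}^E$, then $\bt(P)\in\P_{A+\vv\to B}^N$ has its peaks at the valleys of $P$, and then $\bs(\bt(P))\in\P_{A\to B}^E$ has its valleys at the peaks of $\bt(P)$, which are exactly the valleys of $P$. Because a path in $\P_{A\to B}^E$ is determined by its valleys, $\bs(\bt(P))=P$; the reverse composition $\bt(\bs(P))=P$ is symmetric. The endpoints also return correctly, since $\bt$ shifts the initial point by $+\vv$ and $\bs$ by $-\vv$, so $\P_{A\to B}^E\xrightarrow{\bt}\P_{A+\vv\to B}^N\xrightarrow{\bs}\P_{A\to B}^E$.

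I expect the only delicate points to be bookkeeping rather than conceptual: ensuring the ``ends in $E$''/``ends in $N$'' conditions translate into the correct strict inequality on the last coordinate ($c_k<u$ versus $d_k<v$), and handling the degenerate case $k=0$ of a path with no valleys (resp.\ no peaks), where one checks directly that $\bt$ sends $N^{v-y}E^{u-x}$ to $E^{u-x-1}N^{v-y+1}$ and that this indeed lands in $\P_{A+\vv\to B}^N$ (its single requirement, $v\ge y$, is guaranteed by existence of $P$). Both are routine once the inequalities above are written out.
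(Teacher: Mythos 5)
Your proposal is correct and follows essentially the same route as the paper: both arguments rest on the observation that the admissible valley-coordinate tuples for $\P_{A\to B}^E$ and the admissible peak-coordinate tuples for $\P_{A+\vv\to B}^N$ are described by literally identical inequalities over the integers, so that $\bt$ (and symmetrically $\bs$) is a composite of the coordinate bijections. The only cosmetic difference is that the paper obtains the bijectivity of $\bs$ and the mutual-inverse property in one step by identifying $\bt^{-1}$ with $\bs$ (with $A+\vv$ in the role of $A$), whereas you verify each claim separately; the content is the same.
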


\begin{proof}
The fact that $\bt$ is well defined and it is a bijection follows by noting that the inequalities satisfied by the coordinates of the valleys of paths in $\P_{A\to B}^E$ coincide with those satisfied by the coordinates of the peaks of paths in $\P_{A+\vv\to B}^N$. Specifically, if $A=(x,y)$, $B=(u,v)$, and these coordinates are
$(c_1,d_1), (c_2,d_2), \dots, (c_k,d_k)$, they satisfy
$$x<c_1<c_2<\dots<c_k<u \quad\text{and}\quad y\le d_1<d_2<\dots<d_k< v,$$ 
or equivalently
$$x+1\le c_1<c_2<\dots<c_k<u \quad\text{and}\quad y-1< d_1<d_2<\dots<d_k< v.$$ 

Its inverse $\bt^{-1}:\P_{A+\vv\to B}^N\to\P_{A\to B}^E$ is the map that turns peaks into valleys, and so it coincides with our definition of $\bs$, with $A+\vv$ playing the role of $A$. This completes the proof.
\end{proof}

Two more maps closely related to $\bt$ and $\bs$ that will be useful in our constructions are $\ta=\rho\circ\bt$ and $\si=\rho\circ\bs$. The first one is a bijection
$$\ta:\P_{(x,y)\to(u,v)}^E\to\P_{(y-1,x+1)\to(v,u)}^E,$$ 
and it maps the path with valleys at $(c_1,d_1), \dots, (c_k,d_k)$ to the path with valleys at 
$(d_1,c_1), \dots,$ $(d_k,c_k)$. Clearly, viewed as a map from $\P^E$ to itself, $\ta$ is an involution, in the sense that $\ta^{-1}=\ta$.

The second one is a bijection
$$\si:\P_{(x,y)\to(u,v)}^N\to\P_{(y+1,x-1)\to(v,u)}^N,$$
and it maps the path with peaks at $(c_1,d_1), \dots, (c_k,d_k)$ to the path with peaks at 
$(d_1,c_1),\dots, (d_k,c_k)$.
Again, $\si$ is an involution of $\P^N$.

See Figure~\ref{fig:tasirho} for an example of these bijections, and Figure~\ref{fig:tasi-diagram} for a diagram of their relationships. The next lemma describes how each of the maps affects the major index.

\begin{figure}[htb]
\centering
\begin{tikzpicture}[scale=1]
\draw (0,0) node[left] {$\P_{(x,y)\to(u,v)}^E$};
\draw (3,0) node[right] {$\P_{(x+1,y-1)\to(u,v)}^N$};
\draw (0,-3) node[left] {$\P_{(y,x)\to(v,u)}^N$};
\draw (3,-3) node[right] {$\P_{(y-1,x+1)\to(v,u)}^E$};
\draw[->] (0,.1) -- node[above] {$\bt$} (3,.1);
\draw[<-] (0,-.1) -- node[below] {$\bs$} (3,-.1);
\draw[->] (0,-2.9) -- node[above] {$\bs$} (3,-2.9);
\draw[<-] (0,-3.1) -- node[below] {$\bt$} (3,-3.1);
\draw[<->] (-.1,-.3) -- (3.1,-2.7);
\draw(1,-.8) node {$\ta$};
\draw[<->] (-.1,-2.7) -- (3.1,-.3);
\draw(1,-2.2) node {$\si$};
\draw[<->] (-1,-.4) -- node[left] {$\rho$} (-1,-2.7);
\draw[<->] (4,-.4) -- node[right] {$\rho$} (4,-2.7);

\draw[purple] (-1,.8) node {\small $\maj=M$};
\draw[purple] (4.5,.8) node {\small $\maj=M+u-x-1$};
\draw[purple] (-1,-3.8) node {\small $\maj=M+v-y$};
\draw[purple] (4.5,-3.8) node {\small $\maj=M$};

\draw[darkgray] (-1.5,-.8) node[left] {\small valleys at $(c_1,d_1),\dots,(c_k,d_k)$};
\draw[darkgray]  (4.5,-.8) node[right] {\small peaks at $(c_1,d_1),\dots,(c_k,d_k)$};
\draw[darkgray]  (-1.5,-2.2) node[left] {\small peaks at $(d_1,c_1),\dots,(d_k,c_k)$};
\draw[darkgray]  (4.5,-2.2) node[right] {\small valleys at $(d_1,c_1),\dots,(d_k,c_k)$};
\end{tikzpicture}
\caption{Diagram of the relationships among the maps $\ta,\si,\bt,\bs$, and their effect on $\maj$.}
\label{fig:tasi-diagram}
\end{figure}

\begin{lemma}\label{lem:maj}
If $P\in \P_{(x,y)\to(u,v)}$, then
\begin{subnumcases}{\maj(\rho(P))= \label{eq:majrho}} 
 \maj(P)+v-y  & if $P$ ends in $E$, \label{eq:majrhoE}\\
 \maj(P)-u+x  & if $P$ ends in $N$. \label{eq:majrhoN}
\end{subnumcases}
If $P\in\P_{(x,y)\to(u,v)}^E$, then 
\begin{align}
\label{eq:majbt} \maj(\bt(P))&=\maj(P)+u-x-1,\\
\label{eq:majta} \maj(\ta(P))&=\maj(P).
\end{align}
If $P\in\P_{(x,y)\to(u,v)}^N$, 
then 
\begin{align}
\label{eq:majbs} \maj(\bs(P))&=\maj(P)-u+x,\\
\label{eq:majsi} \maj(\si(P))&=\maj(P)-u+x+v-y-1.
\end{align}
\end{lemma}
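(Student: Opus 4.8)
The plan is to prove the four major-index formulas by directly computing the change in $\maj$ in terms of the valley (or peak) coordinates, using the explicit combinatorial descriptions of the maps already given. The key reduction is that $\maj$ of a path can be recovered from its valley positions: if $P\in\P_{A\to B}$ with $A=(x,y)$, $B=(u,v)$, and its valleys sit at $(c_1,d_1),\dots,(c_k,d_k)$, then the position of the valley at $(c_i,d_i)$ (counting vertices from $0$ at $A$) equals $(c_i-x)+(d_i-y)$, so that
\begin{equation}
\maj(P)=\sum_{i=1}^k\big((c_i-x)+(d_i-y)\big).
\label{eq:majvalley}
\end{equation}
Since $\bt$ turns valleys into peaks at the same coordinates, and $\ta,\si$ additionally swap coordinates via $\rho$, all four statements become bookkeeping of how \eqref{eq:majvalley} transforms, once I have the analogous peak version of this formula.

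First I would establish \eqref{eq:majvalley} and its peak analogue. For peaks, if the peaks of $P$ are at $(c_i,d_i)$, I claim the corresponding \emph{valley} positions of the reflected/dual description are controlled the same way; concretely I would prove that for $P\in\P^N_{A\to B}$ the total major index equals $\sum_i\big((c_i-x)+(d_i-y)\big)$ minus a correction coming from the fact that peaks and valleys are offset by one vertex. The cleanest route is to note that between consecutive valleys the contribution of the $N$ and $E$ runs is determined entirely by the valley coordinates, and to verify \eqref{eq:majvalley} by a short induction on $k$ (adding one valley at a time shifts a block of the path). I expect this lemma to be the main obstacle, because getting the endpoint and ``ends in $E$ vs.\ $N$'' boundary cases exactly right is where off-by-one errors hide; the rest is mechanical.

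Next I would treat $\rho$, which reflects across $y=x$ and hence sends valley coordinates $(c_i,d_i)$ to peak coordinates $(d_i,c_i)$ (a valley of $P$ becomes a peak of $\rho(P)$). Using \eqref{eq:majvalley} for $\rho(P)\in\P_{(y,x)\to(v,u)}$ together with the peak formula, the difference $\maj(\rho(P))-\maj(P)$ collapses to a telescoping expression independent of the $(c_i,d_i)$: the symmetric sum $\sum_i\big((c_i-x)+(d_i-y)\big)$ is invariant under swapping coordinates up to the endpoint shifts, leaving exactly $v-y$ when $P$ ends in $E$ and $-u+x$ when $P$ ends in $N$. This gives \eqref{eq:majrho}. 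I would handle the two end-step cases separately, since the count $k$ of valleys versus peaks and whether the final run is absorbed differs by exactly one between them, which is precisely what produces the two different constants.

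Finally I would derive the remaining identities by composition, which is why the diagram in Figure~\ref{fig:tasi-diagram} is so useful. For \eqref{eq:majbt}: since $\bt$ places the peaks of $\bt(P)$ at the valley coordinates $(c_i,d_i)$ of $P$ and $\bt(P)\in\P^N_{(x+1,y-1)\to(u,v)}$, applying the peak formula and subtracting \eqref{eq:majvalley} yields a constant, which I expect to be $u-x-1$. Then \eqref{eq:majta} follows from $\ta=\rho\circ\bt$ by adding \eqref{eq:majbt} to the $\rho$ contribution \eqref{eq:majrhoN} (note $\bt(P)$ ends in $N$), and the two corrections should cancel to give $\maj(\ta(P))=\maj(P)$. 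Symmetrically, \eqref{eq:majbs} comes from $\bs=\bt^{-1}$ (Lemma~\ref{lem:bsbt}) by inverting \eqref{eq:majbt} with the shifted endpoint, and \eqref{eq:majsi} from $\si=\rho\circ\bs$ via \eqref{eq:majrhoE} applied to $\bs(P)$, which ends in $E$. Throughout, I would keep the endpoint substitutions explicit (e.g.\ replacing $x$ by $x+1$, $y$ by $y-1$ when passing to $\bt(P)$) so that the claimed constants $u-x-1$, $-u+x$, and $-u+x+v-y-1$ emerge as exact cancellations rather than requiring separate verification.
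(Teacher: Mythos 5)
Your proposal is correct and follows essentially the same route as the paper: both rest on the valley-coordinate formula $\maj(P)=\sum_i(c_i+d_i)-k(x+y)$, carry out one careful valleys-from-peaks computation with its boundary case (which the paper does inline for $\rho(P)$ ending in $E$, and you package as a standalone ``peak formula''), and then obtain the remaining identities by composing $\rho$, $\bt$, $\bs$, $\ta$, $\si$ with the endpoint shifts made explicit. The only difference is a trivial reordering — the paper proves \eqref{eq:majta} directly and deduces \eqref{eq:majbt} via $\bt=\rho\circ\ta$, while you prove \eqref{eq:majbt} directly and deduce \eqref{eq:majta} — and your flagged off-by-one concern is exactly the point the paper handles by checking the case $d_1=y$ separately.
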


\begin{proof}
Let $P\in\P_{(x,y)\to(u,v)}$, and suppose that its valleys have coordinates $(c_1,d_1),  \dots, (c_k,d_k)$. Then
\begin{equation}
\label{eq:maj-original}
\maj(P)=\sum_{i=1}^k(c_i+d_i-x-y)=\sum_{i=1}^k(c_i+d_i)-k(x+y).
\end{equation}
The path $\rho(P)\in\P_{(y,x)\to(v,u)}$ has peaks at $(d_1,c_1),\dots,(d_k,c_k)$. 

If $P$ ends in $E$, then $\rho(P)$ ends in $N$, and so $\rho(P)$ has valleys at $(d_2,c_1), (d_3,c_2),\dots,(d_k,c_{k-1})$, $(v,c_k)$, with an additional valley at $(d_1,x)$ if and only if $\rho(P)$ starts with an $E$, which happens precisely when $y<d_1$ (as in the example on the left of Figure~\ref{fig:tasirho}). In both cases, noting that the starting point of $\rho(P)$ is $(y,x)$, we have
\begin{align*}
\maj(\rho(P))&=(d_1+x-y-x)+\sum_{i=1}^{k-1}(d_{i+1}+c_i-y-x)+(v+c_k-y-x)\\ 
&=\sum_{i=1}^k(c_i+d_i)-k(x+y)+v-y\\
&=\maj(P)+v-y. 
\end{align*}
Indeed, even if $\rho(P)$ does not have a valley at $(d_1,x)$, then $d_1=y$, in which case the term $d_1+x-y-x$ does not contribute to the major index, so the above formula is still valid. This proves Equation~\eqref{eq:majrhoE}.

If $P$ ends in $N$, then $\rho(P)$ ends in $E$, so we can apply Equation~\eqref{eq:majrhoE} to the path $\rho(P)\in\P_{(y,x)\to(v,u)}^E$. We obtain
$$\maj(P)=\maj(\rho(\rho(P)))=\maj(\rho(P))+u-x,$$
from where $\maj(\rho(P))=\maj(P)-u+x$, proving Equation~\eqref{eq:majrhoN}.

Now suppose again that $P\in\P_{(x,y)\to(u,v)}^E$. The valleys of $\ta(P)$ have coordinates $(d_1,c_1), \dots,(d_k,c_k)$, and the coordinates of the starting point of $\ta(P)$ sum to $(y-1)+(x+1)=x+y$. It follows that $\maj(\ta(P))$ coincides with the right-hand side of Equation~\eqref{eq:maj-original}. This proves Equation~\eqref{eq:majta}.

Equation~\eqref{eq:majbt} can be proved with an argument similar to the proof of Equation~\eqref{eq:majrho}. Alternatively, it can be deduced from this equation using the fact that $\bt(P)=\rho^{-1}(\ta(P))=\rho(\ta(P))$. Since $\ta(P)\in\P_{(y-1,x+1)\to(v,u)}^E$, Equations~\eqref{eq:majrho} and~\eqref{eq:majta} give
$$\maj(\bt(P))= \maj(\rho(\ta(P)))=\maj(\ta(P))+u-(x+1)=\maj(P)+u-x-1.$$

To deduce Equation~\eqref{eq:majbs} from Equation~\eqref{eq:majbt}, suppose now
 that $P\in\P_{(x,y)\to(u,v)}^N$.  Since $P=\bt(\bs(P))$ by Lemma~\ref{lem:bsbt}, applying Equation~\eqref{eq:majbt} to the path $\bs(P)\in\P_{(x-1,y+1)\to(u,v)}^E$ gives
$$\maj(P)=\maj(\bs(P))+u-(x-1)-1,$$
proving Equation~\eqref{eq:majbs}.

Finally, to prove Equation~\eqref{eq:majsi}, we use the fact that $\si(P)=\rho(\bs(P))$. Since $\bs(P)\in\P_{(x-1,y+1)\to(u,v)}^E$, Equations~\eqref{eq:majrho} and~\eqref{eq:majbs} give
\[
\maj(\si(P))= \maj(\rho(\bs(P)))=\maj(\bs(P))+v-(y+1)=\maj(P)-u+x+v-y-1.\qedhere
\]
\end{proof}

Let us illustrate Lemma~\ref{lem:maj} with some examples. Note that the quantities $u-x$ and $v-y$ are simply the number of $E$ and $N$ steps of $P$, respectively. 
If $P$ is the path in the top left of Figure~\ref{fig:tasirho}, then $\maj(P)=3+7+11+14=35$, whereas the path $\rho(P)$ in the bottom left has $\maj(\rho(P))=2+5+8+12+15=42$, so applying $\rho$ increases the major index by $v-y=7$. The path $\bt(P)$ in the top right has $\maj(\bt(P))=5+10+13+15=43$, so $\bt$ increases the major index by $u-x-1=8$. The path $\ta(P)$ in the bottom right has major index $\maj(\ta(P))=3+7+11+14=35=\maj(P)$.

Via the straightforward correspondence described in Section~\ref{sec:basic} between paths with $N$ and $E$ steps and paths with $U$ and $D$ steps, we can interpret all the maps in this section as maps on sets of the form $\G_{a,b}$, where $a,b\ge0$. For example, denoting by $\G_{a,b}^D$ and $\G_{a,b}^U$ the subsets of $\G_{a,b}$ consisting of paths that end in $D$ and $U$, respectively,
we can view $\ta$ and $\si$ as maps
\begin{equation}\label{eq:tasi}
\ta:\G_{a,b}^D\to\G_{b-1,a+1}^D \quad\text{and}\quad \si:\G_{a,b}^U\to\G_{b+1,a-1}^U.
\end{equation}
It follows from Lemma~\ref{lem:maj} that, if $P\in \G_{a,b}^D$, then
\begin{equation}\label{eq:majta2}
\maj(\ta(P))=\maj(P),
\end{equation}
and if $P\in \G_{a,b}^U$, then
\begin{equation}\label{eq:majsi2}
\maj(\si(P))=\maj(P)+a-b-1.
\end{equation}
Additionally, if $P\in \G_{a,b}$, then
\begin{equation}\label{eq:majrho2}
 \maj(\rho(P))=\begin{cases} 
 \maj(P)+a & \text{if $P$ ends in $D$},\\
 \maj(P)-b & \text{if $P$ ends in $U$}.
\end{cases}
\end{equation}
These maps will play a key role in the next section.

\section{Proofs for paths crossing a line} 
\label{sec:line-proofs}

The goal of this section is to prove Theorems~\ref{thm:xaxis} and~\ref{thm:line}.
Our bijections will be easier to visualize if we allow the starting point of the lattice paths with $U$ and $D$ steps to be anywhere on the $y$-axis, by identifying paths in $\G_{a,b}$ with their vertical translations. 
In particular, it will be convenient to identify $\G_{a,b}^{\ge r,\ell}$ with the set of paths with $a$ steps $U$ and $b$ steps $D$ that start at the point $(0,-\ell)$ and cross the $x$-axis at least $r$ times. Note that the ending point of such paths is $(a+b,-\ell+a-b)$, and that vertical translations do not affect the major index.

In a similar fashion, by applying vertical translations as needed, we will interpret the domain and the range of the maps $\ta$ and $\si$ from Equation~\eqref{eq:tasi} as consisting of paths that end on the $x$-axis. With this perspective, for $P\in\G_{a,b}^D$, viewed as a path starting at $(0,b-a)$ and ending at $(a+b,0)$, its image $\ta(P)$ is the path starting at $(0,a-b+2)$, ending at the same point $(a+b,0)$, and whose valleys are obtained by reflecting the valleys of $P$ along the $x$-axis. 
 Similarly, for $P\in\G_{a,b}^U$ starting at $(0,b-a)$ and ending at $(a+b,0)$, its image $\si(P)$ is the path 
starting at $(0,a-b-2)$, ending at the same point $(a+b,0)$, and whose peaks are obtained by reflecting the peaks of $P$ along the $x$-axis. We will use these convenient descriptions of $\ta$ and $\si$ throughout this section. See Figure~\ref{fig:tasi-x} for examples.

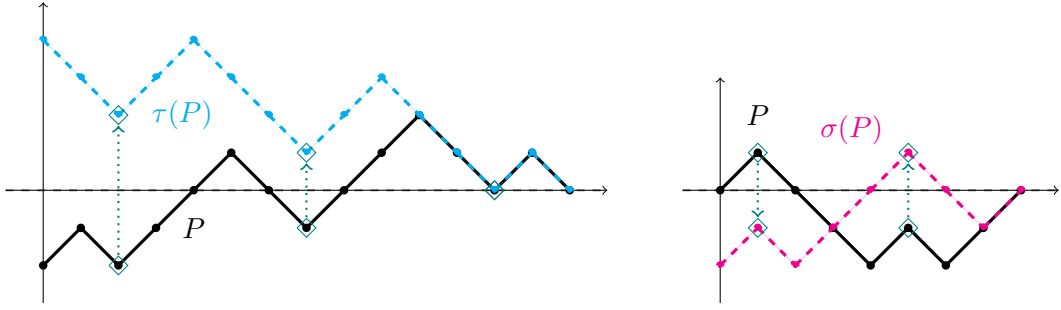
\begin{figure}[htb]
\centering
\begin{tikzpicture}[scale=0.5]
\draw[dashed,thick,gray] (-1,0)--(15,0);
\draw[->] (-1,0)--(15,0);
\draw[->] (0,-3)--(0,5);
\draw[very thick,fill] (0,-2) \startud \up\dn\up\up\up\dn\dn\up \up\up\dn\dn\up\dn;
\draw (4,-1) node {$P$};
\draw[cyan,dashed,very thick,fill](0,4)  \startud\dn\dn\up\up\dn\dn\dn\up\up\dn\dn\dn\up\dn;
\draw[cyan] (3.7,2) node {$\ta(P)$};
\movepeakvalley{2}{-2}
\movepeakvalley{7}{-1}
\movepeakvalley{12}{0}

\begin{scope}[shift={(18,0)}]
\draw[dashed,thick,gray] (-1,0)--(9,0);
\draw[->] (-1,0)--(9,0);
\draw[->] (0,-3)--(0,3);
\draw[very thick,fill] (0,0) \startud \up\dn\dn\dn\up\dn\up\up;
\draw (1,2) node {$P$};
\draw[magenta,dashed,very thick,fill](0,-2)  \startud\up\dn\up\up\up\dn\dn\up;
\draw[magenta] (3.5,1.6) node {$\si(P)$};
\movepeakvalley{1}{1}
\movepeakvalley{5}{-1}
\end{scope}
\end{tikzpicture}
\caption{Examples of the bijections $\ta$ and $\si$ on paths that have been translated vertically so that they end on the $x$-axis.}
\label{fig:tasi-x}
\end{figure}

Throughout this section, we will assume that $a,b,r\ge0$ and $\ell\in\Z$.

\begin{definition}\label{def:tarsir}
Given $P\in\G^{\ge r,\ell}_{a,b}$, viewed as a path from $(0,-\ell)$ to $(a+b,-\ell+a-b)$ crossing the $x$-axis at least $r$ times, label these crossings so that $C_j$ denotes the $j$th crossing from the right, for $1\le j\le r$. Decompose $P$ as $P=P_\lt P_\rt$ by splitting at $C_r$. 
If $C_r$ is a downward crossing, define 
$$\ta_r(P)=\ta(P_\lt)P_\rt.$$
If $C_r$ is an upward crossing, define 
$$\si_r(P)=\si(P_\lt)P_\rt.$$
\end{definition}

Examples of the maps $\ta_r$ and $\si_r$ are given in Figure~\ref{fig:tarsir}.

\begin{figure}[htb]
\centering
\begin{tikzpicture}[scale=0.5]
\draw[dashed,thick,gray] (-.5,0)--(15,0);
\draw[->] (-.5,0)--(15,0);
\draw[->] (0,-2)--(0,2);
\draw[very thick,fill] (0,-1) \startud \dn\up\dn\up\up\up\dn\up\dn \dn\up\up\up\dn;
\draw (7,1.5) node {$P$};
\crossing{5}{0}
\crossing{9}{0}
\crossing{11}{0}
\draw (5.2,-.8) node {$C_3$};
\draw (8.8,-.8) node {$C_2$};
\draw (11.2,-.8) node{$C_1$};

\begin{scope}[shift={(17,3)}]
\draw[dashed,thick,gray] (-.5,0)--(15,0);
\draw[->] (-.5,0)--(15,0);
\draw[->] (0,-2)--(0,4);
\draw[fill] (0,-1) \startud \dn\up\dn\up\up\up\dn\up\dn \dn\up\up\up\dn;
\draw[very thick,fill,cyan] (0,3) \startud \dn\up\dn\up\dn\dn\dn\up\dn \dn\up\up\up\dn;
\draw[cyan] (6,2.5) node {$\ta_2(P)$};
\movepeakvalley{1}{-2}
\movepeakvalley{3}{-2}
\movepeakvalley{7}{0}
\crossing{9}{0}
\draw (8.8,-.8) node {$C_2$};
\end{scope}

\begin{scope}[shift={(17,-3)}]
\draw[dashed,thick,gray] (-.5,0)--(15,0);
\draw[->] (-.5,0)--(15,0);
\draw[->] (0,-2)--(0,2);
\draw[fill] (0,-1) \startud \dn\up\dn\up\up\up\dn\up\dn \dn\up\up\up\dn;
\draw[very thick,fill,magenta] (0,-1) \startud \up\up\dn\dn\dn\up\dn\up\dn\up\up \up\up\dn;
\draw[magenta] (2.5,1.8) node {$\si_1(P)$};
\movepeakvalley{2}{-1}
\movepeakvalley{6}{1}
\movepeakvalley{8}{1}
\crossing{11}{0}
\draw (11.2,-.8) node{$C_1$};
\end{scope}
\end{tikzpicture}
\caption{The bijections $\ta_2$ and $\si_1$ applied to the path $P$ from Figure~\ref{fig:Gstrl}, which has been translated so that the line being crossed is the $x$-axis.}
\label{fig:tarsir}
\end{figure}
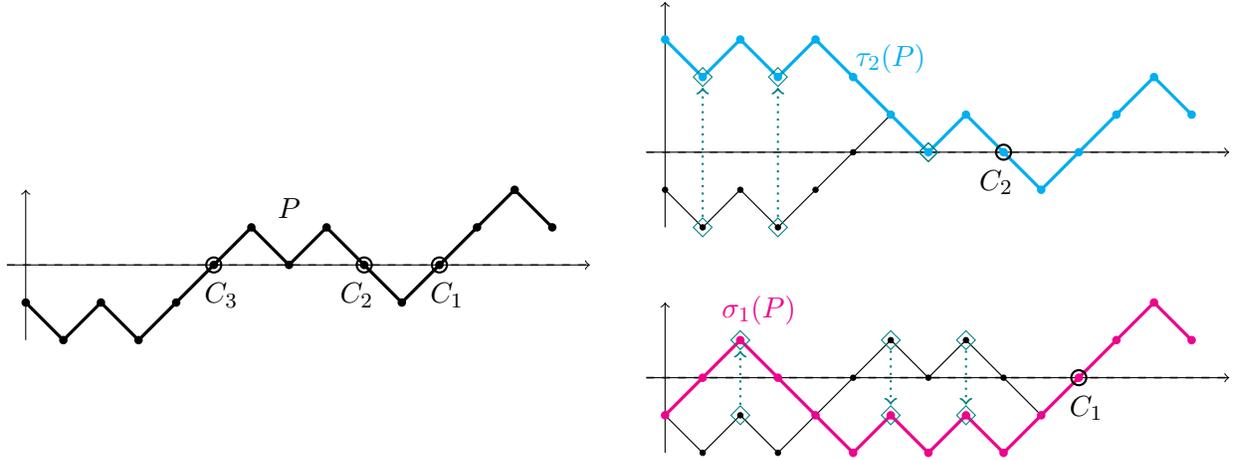

\begin{lemma}\label{lem:maj-tarsir}
Let $P\in\G^{\ge r,\ell}_{a,b}$. If $C_r$ is a downward crossing, then
\begin{equation}\label{eq:tar}
\ta_r(P)\in\G^{\ge r,-\ell-2}_{a-\ell-1,b+\ell+1}
\end{equation}
and $$\maj(\ta_r(P))=\maj(P).$$
Additionally, $\ta_r$ is a bijection between the subsets of $\G^{\ge r,\ell}_{a,b}$ and $\G^{\ge r,-\ell-2}_{a-\ell-1,b+\ell+1}$ consisting of paths whose $r$th crossing from the right is a downward crossing.

If $C_r$ is an upward crossing, then
\begin{equation}\label{eq:sir}
\si_r(P)\in\G^{\ge r,-\ell+2}_{a-\ell+1,b+\ell-1}
\end{equation}
and $$\maj(\si_r(P))=\maj(P)+\ell-1.$$
Additionally, $\si_r$ is a bijection between the subsets of $\G^{\ge r,\ell}_{a,b}$ and $\G^{\ge r,-\ell+2}_{a-\ell+1,b+\ell-1}$ consisting of paths whose $r$th crossing from the right is an upward crossing.
\end{lemma}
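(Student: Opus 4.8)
The plan is to analyze the two cases separately, in each case verifying three things: the endpoint/line data of the image path, the effect on $\maj$, and the bijectivity. I will treat the downward-crossing case first, then explain how the upward case is entirely analogous via $\si$.

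Suppose $C_r$ is a downward crossing of $P\in\G^{\ge r,\ell}_{a,b}$, and write $P=P_\lt P_\rt$ with the split at $C_r=(j,0)$ for some $j$. The key is that $P_\lt$ ends on the $x$-axis: since $C_r$ is a downward crossing, $P$ arrives at $C_r$ via a $D$ step, so $P_\lt$ ends in $D$. Thus $\ta$ applies to $P_\lt$ in the sense of Equation~\eqref{eq:tasi}. First I would determine the step counts. If $P_\lt\in\G^D_{a',b'}$ (starting at $(0,-\ell)$ and ending at $(j,0)$), then $a'-b'=\ell$ and $a'+b'=j$. Under the identification at the start of this section, $\ta(P_\lt)\in\G^D_{b'-1,a'+1}$, now viewed as starting at $(0,-(a'-b'+2))=(0,-\ell-2)$ and ending at the same point $(j,0)$ — its valleys are the reflections across the $x$-axis of the valleys of $P_\lt$. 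Concatenating with the unchanged $P_\rt$, the new path $\ta_r(P)$ starts at $(0,-\ell-2)$, so it lies in $\G^{\ge r',-(-\ell-2)}_{\cdot,\cdot}$; counting total up/down steps (we gain one $U$ and lose one $D$ on the left piece, right piece unchanged) gives total $(a-\ell-1)$ up steps and $(b+\ell+1)$ down steps. Because $\ta$ reflects valleys across the $x$-axis but fixes the endpoints of $P_\lt$, and in particular fixes $C_r$ and every vertex at or to the right of $C_r$, the crossings $C_1,\dots,C_{r-1}$ are untouched and $C_r$ remains a crossing; hence $\ta_r(P)$ still crosses the $x$-axis at least $r$ times. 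This verifies Equation~\eqref{eq:tar}. The statement $\maj(\ta_r(P))=\maj(P)$ follows immediately from Equation~\eqref{eq:majta2}: the valleys in $P_\rt$ are unaffected, and $\ta$ preserves the major index of $P_\lt$; since the major index of the concatenation is the sum of the valley positions and positions are measured from the common starting $x$-coordinate (which is $0$ for both $P$ and $\ta_r(P)$ after translation), the two contributions add without interaction.

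For bijectivity, the crucial point is that $\ta_r$ must preserve the location of $C_r$ as the $r$th crossing from the right and the fact that it is a downward crossing, so that the construction can be inverted by the same recipe. I would argue that $\ta$, being an involution on $\P^E$-type paths (equivalently an involution on $\G^D$-paths, by Equation~\eqref{eq:majta} together with the involutivity of $\ta$ noted before Lemma~\ref{lem:maj}), supplies a two-sided inverse on the left piece. Concretely, given a path $P'$ in $\G^{\ge r,-\ell-2}_{a-\ell-1,b+\ell+1}$ whose $r$th crossing from the right is downward, splitting at that crossing and applying $\ta$ to the left piece returns a path whose left piece is again a $\G^D$-path ending at $C_r$; since $\ta\circ\ta$ is the identity and $P_\rt$ is carried along unchanged, this is the inverse of $\ta_r$. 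The one subtlety to check is that applying $\ta$ to the left piece does not create or destroy a crossing at $C_r$ or shift which crossing is $r$th from the right — but this is exactly what the endpoint-fixing property of $\ta$ guarantees, since $C_r$ and all later vertices are common to $P$ and $\ta_r(P)$.

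The upward-crossing case is parallel, using $\si$ in place of $\ta$. Now $C_r$ is an upward crossing, so $P$ arrives at $C_r$ via a $U$ step and $P_\lt\in\G^U_{a',b'}$. Applying Equation~\eqref{eq:tasi} gives $\si(P_\lt)\in\G^U_{b'+1,a'-1}$, which after the translation convention starts at $(0,-\ell+2)$ and ends at $C_r$, reflecting the peaks of $P_\lt$ across the $x$-axis; the step count bookkeeping yields $\si_r(P)\in\G^{\ge r,-\ell+2}_{a-\ell+1,b+\ell-1}$, establishing Equation~\eqref{eq:sir}. The effect on the major index comes from Equation~\eqref{eq:majsi2}: with $a'-b'=-\ell$ (since $P_\lt$ ends at height $0$ starting from $(0,-\ell)$, wait — here $a'-b'=\ell$ as well, but $P_\lt$ ends in $U$), the shift $a'-b'-1=\ell-1$ is independent of the split position, so $\maj(\si_r(P))=\maj(P)+\ell-1$, the right piece again contributing nothing extra. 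Bijectivity follows as before from the involutivity of $\si$ on $\P^N$-paths and the fact that $\si$ fixes the endpoints of $P_\lt$, hence fixes $C_r$ and all vertices to its right. The main obstacle, in both cases, is the careful verification that the crossing structure to the right of $C_r$ — and the status of $C_r$ itself as a downward (resp.\ upward) crossing — is preserved, so that the split point is recovered by the inverse map; everything else is a matter of tracking step counts and invoking Lemma~\ref{lem:maj}.
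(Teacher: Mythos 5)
Your proposal takes exactly the same route as the paper's proof: split $P$ at $C_r$, apply $\ta$ (resp.\ $\si$) to the prefix, deduce bijectivity from the involutivity and endpoint-preservation of these maps, and read off the change in $\maj$ from Equations~\eqref{eq:majta2} and~\eqref{eq:majsi2}. The bijectivity argument and the major-index computations are correct.

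The endpoint bookkeeping, however, contains sign slips, and in the downward case they prevent you from actually verifying~\eqref{eq:tar}. With $P_\lt\in\G^D_{a',b'}$ starting at $(0,-\ell)$, so that $a'-b'=\ell$, the path $\ta(P_\lt)$ starts at $(0,a'-b'+2)=(0,\ell+2)$, not at $(0,-(a'-b'+2))=(0,-\ell-2)$. Under the identification of $\G^{\ge r,\ell'}_{\cdot,\cdot}$ with paths starting at $(0,-\ell')$, the correct starting height $\ell+2$ yields $\ell'=-\ell-2$, as the lemma asserts; your starting height yields $\ell'=\ell+2$, and indeed the set you write down, $\G^{\ge r',-(-\ell-2)}_{\cdot,\cdot}=\G^{\ge r',\ell+2}_{\cdot,\cdot}$, is not the set appearing in~\eqref{eq:tar}. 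The same slip occurs in the upward case, where $\si(P_\lt)$ starts at $(0,\ell-2)$ rather than $(0,-\ell+2)$ (there you nevertheless state the correct target set, inconsistently with your own starting point). Finally, the parenthetical ``we gain one $U$ and lose one $D$ on the left piece'' is false in general: the prefix loses $\ell+1$ up steps and gains $\ell+1$ down steps, which is what produces the (correctly stated) totals $a-\ell-1$ and $b+\ell+1$. These are local arithmetic errors rather than conceptual ones; once the starting heights are corrected, your argument coincides with the paper's.
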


\begin{proof}
The path prefix $P_\lt$ starts at $(0,-\ell)$ and ends at $C_r$, which is on the $x$-axis. Suppose first that $C_r$ is a downward crossing.
Then $\ta(P_\lt)$ starts at $(0,\ell+2)$ and ends at the same point $C_r$. Thus, $\ta_r(P)$ is a path from $(0,\ell+2)$ to $(a+b,-\ell+a-b)$ that crosses the $x$-axis at least $r$ times, proving~\eqref{eq:tar}. Since the map $\ta$ preserves the last step (which is a $D$), the $r$th crossing of $\ta_r(P)$ from the right is still a downward crossing, and in fact $\ta_r$ is a bijection between the stated subsets. Indeed, since $\ta$ is an involution, the inverse of $\ta_r$ is $\ta_r$ itself applied to paths in $\G^{\ge r,-\ell-2}_{a-\ell-1,b+\ell+1}$ whose $r$th crossing from the right is a downward crossing.

By Definition~\ref{def:tarsir} and Equation~\eqref{eq:majta2}, we have
$$\maj(\ta_r(P))=\maj(\ta(P_\lt))+\maj(P_\rt)=\maj(P_\lt)+\maj(P_\rt)=\maj(P).$$

Suppose now that $C_r$ is an upward crossing. Then $\si(P_\lt)$ starts at $(0,\ell-2)$ and ends at $C_r$, and so
$\si_r(P)$ is a path from $(0,\ell-2)$ to $(a+b,-\ell+a-b)$ that crosses the $x$-axis at least $r$ times, proving~\eqref{eq:sir}.
An analogous argument to the one used for $\ta_r$ shows that $\si_r$ is a bijection between the stated subsets.
 By Definition~\ref{def:tarsir} and Equation~\eqref{eq:majsi2}, noting that the change in $y$-coordinate from the first to the last point of $P_\lt$ is $\ell$, we have
\[
\maj(\si_r(P))=\maj(\si(P_\lt))+\maj(P_\rt)=\maj(P_\lt)+\ell-1+\maj(P_\rt)=\maj(P)+\ell-1.\qedhere
\]
\end{proof}

We now have all the tools to prove our formulas counting paths by the major index and the number of crossings of a horizontal line.

\begin{proof}[Proof of Theorems~\ref{thm:xaxis} and~\ref{thm:line}]
Let $a,b,r\ge0$ and $\ell\in\Z$. We will use both interpretations of the elements of $\G^{\ge r,\ell}_{a,b}$: as paths from $(0,0)$ to $(a+b,a-b)$ crossing the line $y=\ell$ at least $r$ times, and as paths from $(0,-\ell)$ to $(a+b,-\ell+a-b)$ crossing the $x$-axis at least $r$ times. In both cases, we call the line being crossed the {\em reference line}, and {\em crossings} refer to the points where the path crosses the reference line. Given a path in $\G^{\ge r,\ell}_{a,b}$, we let $C_j$ denote the $j$th crossing from the right, for $1\le j\le r$.

The proof is divided into nine cases depending on whether the paths start below ($0<\ell$), on ($0=\ell$), or above ($0>\ell$) the reference line, and whether they end below ($\ell>a-b$), on ($\ell=a-b$), or above ($\ell<a-b$) this line. In each case, the goal is to determine $G^{\ge r,\ell}_{a,b}(q)$ by finding a bijection between $\G^{\ge r,\ell}_{a,b}$ and some set of the form $\G_{a',b'}$, with no requirement on the number of crossings.

In Cases I--IV below, the paths neither start nor end on the reference line, and so the parity of the number of crossings is fixed: it is even or odd according to whether the two endpoints are on the same or on opposite sides of the line. Thus, we get equalities of the form $\G^{\ge 2m,\ell}_{a,b}=\G^{\ge 2m-1,\ell}_{a,b}$ (if the endpoints are on the same side) or $\G^{\ge 2m+1,\ell}_{a,b}=\G^{\ge 2m,\ell}_{a,b}$ (if they are on opposite sides). Additionally,
if the right endpoint of a path $P$ is above (respectively below) the reference line, then $C_j$ is an upward (resp.\ downward) crossing for odd $j$, and a downward (resp.\ upward) crossing for even~$j$.

\begin{list1}

\item {\bf Case I: $0<\ell<a-b$.} Since the number of crossings of each path must be odd in this case, we have $\G^{\ge 2m+1,\ell}_{a,b}=\G^{\ge 2m,\ell}_{a,b}$ for all $m\ge0$. The case $m=0$ is solved in Lemma~\ref{lem:qbin}, so we assume that $m\ge1$.
Since paths in $\G^{\ge 2m,\ell}_{a,b}$ end above the reference line, the crossing $C_{2m}$ in these paths must be a downward crossing. Thus, by Lemma~\ref{lem:maj-tarsir}, $\ta_{2m}$ is a bijection between $\G^{\ge 2m,\ell}_{a,b}$ and $\G^{\ge 2m,-\ell-2}_{a-\ell-1,b+\ell+1}$ which preserves the major index.
Paths in the image start and end above the reference line, and so  $\G^{\ge 2m,-\ell-2}_{a-\ell-1,b+\ell+1}=\G^{\ge 2m-1,-\ell-2}_{a-\ell-1,b+\ell+1}$. For paths in this set, $C_{2m}$  is again a downward crossing and $C_{2m-1}$ is an upward crossing. 

Applying Lemma~\ref{lem:maj-tarsir} again, $\si_{2m-1}$ is a bijection between $\G^{\ge 2m-1,-\ell-2}_{a-\ell-1,b+\ell+1}$ and $\G^{\ge 2m-1,\ell+4}_{a+2,b-2}=\G^{\ge 2m-2,\ell+4}_{a+2,b-2}$ that changes $\maj$ by $(-\ell-2)-1=-(\ell+3)$. The resulting paths, like those in the original set, start below and end above the reference line. Repeating the same argument, we obtain a composition of bijections 
\begin{equation}\label{eq:composition}
\si_1\circ\ta_2\circ\dots\circ\si_{2m-1}\circ\ta_{2m}:\G^{\ge 2m,\ell}_{a,b}\to \G^{\ge 0,\ell+4i}_{a+2m,b-2m}=\G_{a+2m,b-2m}
\end{equation}
with the property that, if $Q$ is the image of $P$, then
$$\maj(Q)=\maj(P)-(\ell+3)-(\ell+7)-\dots-(\ell+4i-1)=\maj(P)-m(2m+1+\ell).$$
See Figure~\ref{fig:caseI} for an example of this composition.
It follows that
$$G_{a,b}^{\ge 2m+1,\ell}(q)=G_{a,b}^{\ge 2m,\ell}(q)=q^{m(2m+1+\ell)}\sum_{Q\in \G_{a+2m,b-2m}}q^{\maj(Q)}=q^{m(2m+1+\ell)}\qbin{a+b}{a+2m},$$
by Lemma~\ref{lem:qbin}. This proves Equation~\eqref{eq:0<l<a-b}.

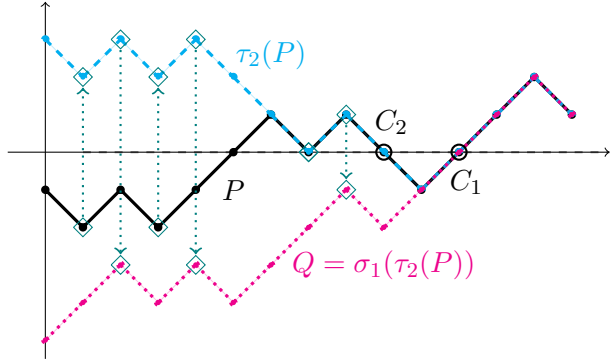
\begin{figure}[htb]
\centering
\begin{tikzpicture}[scale=0.5]
\draw[dashed,thick,gray] (1,0)--(15,0);
\draw[->] (-1,0)--(15,0);
\draw[->] (0,-5.5)--(0,4);
\draw[very thick,fill] (0,-1) \startud \dn\up\dn\up\up\up\dn\up\dn \dn\up\up\up\dn;
\draw (5,-1) node {$P$};
\draw[very thick,fill,cyan,dashed] (0,3) \startud \dn\up\dn\up\dn\dn\dn\up\dn \dn\up\up\up\dn;
\draw[cyan] (6,2.6) node {$\ta_2(P)$};
\draw[very thick,fill,magenta,dotted] (0,-5) \startud \up\up\dn\up\dn\up\up\up\dn\up\up\up\up\dn;
\draw[magenta] (9,-3) node {$Q=\si_1(\ta_2(P))$};
\movepeakvalley{1}{-2}
\movepeakvalley{3}{-2}
\movepeakvalley{7}{0}
\movepeakvalley{2}{3}
\movepeakvalley{4}{3}
\movepeakvalley{8}{1}
\crossing{9}{0}
\crossing{11}{0}
\draw (9.2,.8) node {$C_2$};
\draw (11.2,-.8) node{$C_1$};
\end{tikzpicture}
\caption{The composition $\si_1\circ\ta_2$ applied to the path $P\in\G_{8,6}^{\ge 3,1}$ from Figures~\ref{fig:Gstrl} and~\ref{fig:tarsir}.
Here $\maj(Q)=17=\maj(P)-(\ell+3)$.
}
\label{fig:caseI}
\end{figure}

An equivalent description of the bijection~\eqref{eq:composition} is obtained by repeatedly applying the maps $\si$ and $\ta$ to the appropriate path prefixes. Indeed, decomposing $P\in\G_{a,b}^{\ge 2m,\ell}$ as $P=P_0P_1P_2\dots P_{2m-1}P_{2m}$ by splitting at the rightmost $2m$ crossings of $P$ (so that each $P_j$ for $j\ge1$ lies entirely above or below the reference line), 
its image under this bijection is
$$\si(\ta(\cdots(\si(\ta(P_0)P_1)\cdots)P_{2m-2})P_{2m-1})P_{2m}.$$

\item {\bf Case II: $0>\ell>a-b$.}
This case is similar to Case I, with the roles of $\si$ and $\ta$ reversed. Again, $\G^{\ge 2m+1,\ell}_{a,b}=\G^{\ge 2m,\ell}_{a,b}$ for all $m\ge0$. For paths in $\G^{\ge 2m,\ell}_{a,b}$, where $m\ge1$, the crossing $C_{2m}$ must be an upward crossing. By Lemma~\ref{lem:maj-tarsir}, $\si_{2m}$ is a bijection between $\G^{\ge 2m,\ell}_{a,b}$ and $\G^{\ge 2m,-\ell+2}_{a-\ell+1,b+\ell-1}=\G^{\ge 2m-1,-\ell+2}_{a-\ell+1,b+\ell-1}$ the changes $\maj$ by $\ell-1$.
For paths in the image, which start and end below the reference line, $C_{2m}$ is an upward crossing and $C_{2m-1}$ is a downward crossing. 

Applying Lemma~\ref{lem:maj-tarsir} again, $\ta_{2m-1}$ is a bijection between $\G^{\ge 2m-1,-\ell+2}_{a-\ell+1,b+\ell-1}$ and $\G^{\ge 2m-1,\ell-4}_{a-2,b+2}=\G^{\ge 2m-2,\ell-4}_{a-2,b+2}$ that preserves the major index. The resulting paths,
like those in the original set, start above and end below the reference line. Iterating this argument, we obtain a composition of bijections 
$$\ta_1\circ\si_2\circ\dots\circ\ta_{2m-1}\circ\si_{2m}:\G^{\ge 2m,\ell}_{a,b}\to \G^{\ge 0,\ell-4i}_{a-2m,b+2m}=\G_{a-2m,b+2m}$$
with the property that, if $Q$ is the image of $P$, then
$$\maj(Q)=\maj(P)+(\ell-1)+(\ell-5)+\dots+(\ell-4i+3)=\maj(P)-m(2m-1-\ell).$$
It follows that
$$G_{a,b}^{\ge 2m+1,\ell}(q)=G_{a,b}^{\ge 2m,\ell}(q)=q^{m(2m-1-\ell)}\sum_{Q\in \G_{a-2m,b+2m}}q^{\maj(Q)}=q^{m(2m-1-\ell)}\qbin{a+b}{a-2m},$$
proving Equation~\eqref{eq:0>l>a-b}.

\item {\bf Case III: $0>\ell<a-b$.} 
This case is equivalent to Case I after the first application of $\ta_{2m}$.
Each path must have an even number of crossings, so $\G^{\ge 2m,\ell}_{a,b}=\G^{\ge 2m-1,\ell}_{a,b}$ for all $m\ge1$. 
Since paths in $\G^{\ge 2m-1,\ell}_{a,b}$ end above the reference line, $C_{2m-1}$ is an upward crossing. By Lemma~\ref{lem:maj-tarsir}, $\si_{2m-1}$ is a bijection between $\G^{\ge 2m-1,\ell}_{a,b}$ and $\G^{\ge 2m-1,-\ell+2}_{a-\ell+1,b+\ell-1}=\G^{\ge 2m-2,-\ell+2}_{a-\ell+1,b+\ell-1}$ the changes $\maj$ by $\ell-1$. Continuing as in Case I, we obtain the composition of bijections
$$\si_1\circ\ta_2\circ\dots\circ\si_{2m-3}\circ\ta_{2m-2}\circ\si_{2m-1}:\G^{\ge 2m-1,\ell}_{a,b}\to \G^{\ge 0,-\ell+4i-2}_{a-\ell+2m-1,b+\ell-2m+1}=\G_{a-\ell+2m-1,b+\ell-2m+1}$$
with the property that, if $Q$ is the image of $P$, then
$$\maj(Q)=\maj(P)+(\ell-1)+(\ell-5)+\dots+(\ell-4i+3)=\maj(P)-m(2m-1-\ell).$$
It follows that
$$G_{a,b}^{\ge 2m,\ell}(q)=G_{a,b}^{\ge 2m-1,\ell}(q)=q^{m(2m-1-\ell)}\sum_{Q\in \G_{a-\ell+2m-1,b+\ell-2m+1}}q^{\maj(Q)}=q^{m(2m-1-\ell)}\qbin{a+b}{a+2m-1-\ell},$$
proving Equation~\eqref{eq:0>l<a-b}.

\item {\bf Case IV: $0<\ell>a-b$.} 
As in Case III, $\G^{\ge 2m,\ell}_{a,b}=\G^{\ge 2m-1,\ell}_{a,b}$ for all $m\ge1$. 
For paths in $\G^{\ge 2m-1,\ell}_{a,b}$, the crossing $C_{2m-1}$ is a downward crossing, and $\ta_{2m-1}$ is a bijection between $\G^{\ge 2m-1,\ell}_{a,b}$ and $\G^{\ge 2m-1,-\ell-2}_{a-\ell-1,b+\ell+1}=\G^{\ge 2m-2,-\ell-2}_{a-\ell-1,b+\ell+1}$, which preserves the major index. Continuing as in Case II, we obtain the composition of bijections
$$\ta_1\circ\si_2\circ\dots\circ\ta_{2m-3}\circ\si_{2m-2}\circ\ta_{2m-1}:\G^{\ge 2m-1,\ell}_{a,b}\to \G^{\ge 0,-\ell-4i+2}_{a-\ell-2m+1,b+\ell+2m-1}=\G_{a-\ell-2m+1,b+\ell+2m-1},$$
with the property that, if $Q$ is the image of $P$, then
$$\maj(Q)=\maj(P)-(\ell+3)-(\ell+7)-\dots-(\ell+4i-5)=\maj(P)-(m-1)(2m-1+\ell).$$
It follows that
\begin{multline*}G_{a,b}^{\ge 2m,\ell}(q)=G_{a,b}^{\ge 2m-1,\ell}(q)
=q^{(m-1)(2m-1+\ell)}\sum_{Q\in \G_{a-\ell-2m+1,b+\ell+2m-1}}q^{\maj(Q)}\\
=q^{(m-1)(2m-1+\ell)}\qbin{a+b}{a-2m+1-\ell},\end{multline*}
proving Equation~\eqref{eq:0<l>a-b}.

\item {\bf Case V: $0=\ell<a-b$.} This is the case $a>b$ of Theorem~\ref{thm:xaxis}. Since paths in $\G^{\ge r,0}_{a,b}$ end above the reference line, $C_r$ is a downward crossing if $r$ is even, and an upward crossing if $r$ is odd.

Suppose first that $r$ is even, and write $r=2m$ for some $m\ge1$. The proof in this case is similar to Case~I. By Lemma~\ref{lem:maj-tarsir}, $\ta_{2m}$ is a $\maj$-preserving bijection between $\G^{\ge 2m,0}_{a,b}$ and $\G^{\ge 2m,-2}_{a-1,b+1}=\G^{\ge 2m-1,-2}_{a-1,b+1}$. Paths in this set start and end above the reference line. Continuing as in Case~I with $\ell=0$, we obtain a composition of bijections
$$\si_1\circ\ta_2\circ\dots\circ\si_{2m-1}\circ\ta_{2m}:\G^{\ge 2m,0}_{a,b}\to \G^{\ge 0,4i}_{a+2m,b-2m}=\G_{a+2m,b-2m}$$
that changes $\maj$ by $-m(2m+1)$. It follows that
$$G_{a,b}^{\ge 2m,0}(q)
=q^{m(2m+1)}\qbin{a+b}{a+2m}=q^{\binom{r+1}{2}}\qbin{a+b}{a+r},$$
proving Equation~\eqref{eq:0=l<a-b} for even $r$.

Suppose now that $r$ is odd, and write $r=2m-1$ for some $m\ge1$. By Lemma~\ref{lem:maj-tarsir}, $\si_{2m-1}$ gives a bijection between $\G^{\ge 2m-1,0}_{a,b}$ and $\G^{\ge 2m-1,2}_{a+1,b-1}=\G^{\ge 2m-2,2}_{a+1,b-1}$ that changes $\maj$ by $-1$. Continuing as in Case~III with $\ell=0$, we obtain a composition of bijections
$$\si_1\circ\ta_2\circ\dots\circ\si_{2m-3}\circ\ta_{2m-2}\circ\si_{2m-1}:\G^{\ge 2m-1,0}_{a,b}\to \G^{\ge 0,4i-2}_{a-2m-1,b-2m+1}=\G_{a+2m-1,b-2m+1}$$
that changes $\maj$ by $-m(2m-1)$. It follows that
$$G_{a,b}^{\ge 2m-1,0}(q)
=q^{m(2m-1)}\qbin{a+b}{a+2m-1}=q^{\binom{r+1}{2}}\qbin{a+b}{a+r},$$
proving Equation~\eqref{eq:0=l<a-b} for odd $r$. See Figure~\ref{fig:caseV} for an example.

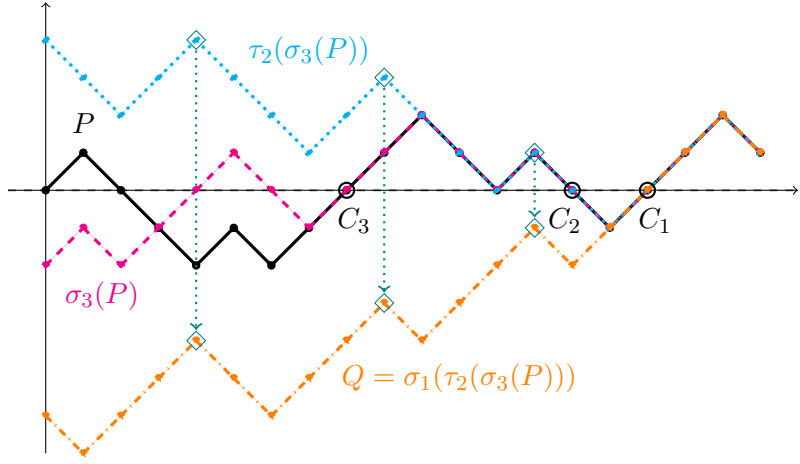
\begin{figure}[htb]
\centering
\begin{tikzpicture}[scale=0.5]
\draw[dashed,thick,gray] (-1,0)--(20,0);
\draw[->] (-1,0)--(20,0);
\draw[->] (0,-7)--(0,5);
\draw[very thick,fill] (0,0) \startud \up\dn\dn\dn\up\dn\up\up \up\up\dn\dn\up\dn \dn\up\up\up\dn;
\draw (1,1.8) node {$P$};
\draw[magenta,dashed,very thick,fill](0,-2)  \startud\up\dn\up\up\up\dn\dn\up \up\up\dn\dn\up\dn \dn\up\up\up\dn;
\draw[magenta] (1.5,-2.8) node {$\si_3(P)$};
\draw[cyan,dotted,very thick,fill](0,4)  \start\dn\dn\up\up\dn\dn\dn\up\up\dn\dn\dn\up\dn  \dn\up\up\up\dn;
\draw[cyan] (7,3.7) node {$\ta_2(\si_3(P))$};
\draw[orange,dash dot,very thick,fill](0,-6)  \start \dn\up\up\up\dn\dn\up\up\up\dn\up\up\up\dn\up\up\up\up\dn;
\draw[orange] (11,-5) node {$Q=\si_1(\ta_2(\si_3(P)))$};
\movepeakvalley{4}{4}
\movepeakvalley{9}{3}
\movepeakvalley{13}{1}
\crossing{8}{0}
\crossing{14}{0}
\crossing{16}{0}
\draw (8.2,-.8) node {$C_3$};
\draw (13.8,-.8) node {$C_2$};
\draw (16.2,-.8) node{$C_1$};
\end{tikzpicture}
\caption{The composition $\si_1\circ\ta_2\circ\si_3$ applied to a path $P\in\G^{\ge 3,0}_{10,9}$. The computation of the maps $\si_3$ and $\ta_2$ is based on the examples in Figure~\ref{fig:tasi-x}.
Note that $\maj(P)=37$ and $\maj(Q)=31=\maj(P)-\binom{r+1}{2}$.
}
\label{fig:caseV}
\end{figure}

\item {\bf Case VI: $0=\ell>a-b$.} This is the case $a>b$ of Theorem~\ref{thm:xaxis}, and it is analogous to Case V. For paths in $\G^{\ge r,0}_{a,b}$, now $C_r$ is an upward crossing if $r$ is even, and a downward crossing if $r$ is odd.

If $r=2m$ for some $m\ge1$, the same argument as in Case~II with $\ell=0$ gives a composition of bijections
$$\ta_1\circ\si_2\circ\dots\circ\ta_{2m-1}\circ\si_{2m}:\G^{\ge 2m,0}_{a,b}\to \G^{\ge 0,-4i}_{a-2m,b+2m}=\G_{a-2m,b+2m}$$
that changes $\maj$ by $-m(2m-1)$.
We deduce that
$$G_{a,b}^{\ge 2m,0}(q)=q^{m(2m-1)}\qbin{a+b}{a-2m}=q^{\binom{r}{2}}\qbin{a+b}{a-r},$$
proving Equation~\eqref{eq:0=l>a-b} for even $r$.

If $r=2m-1$ for some $m\ge1$, the same argument as in Case~IV with $\ell=0$ gives a composition of bijections
$$\ta_1\circ\si_2\circ\dots\circ\ta_{2m-3}\circ\si_{2m-2}\circ\ta_{2m-1}:\G^{\ge 2m-1,0}_{a,b}\to \G^{\ge 0,-4i+2}_{a-2m+1,b+2m-1}=\G_{a-2m+1,b+2m-1}$$
that changes $\maj$ by $-(m-1)(2m-1)$. It follows that
$$G_{a,b}^{\ge 2m-1,0}(q)=q^{(m-1)(2m-1)}\qbin{a+b}{a-2m+1}=q^{\binom{r}{2}}\qbin{a+b}{a-r},$$
proving Equation~\eqref{eq:0=l>a-b} for odd $r$.

\item {\bf Case VII: $0<\ell=a-b$.} 
Denote by  $\G^{\ge r,\ell;D}_{a,b}$ and $\G^{\ge r,\ell;U}_{a,b}$ the subsets of $\G^{\ge r,\ell}_{a,b}$ consisting of paths that end in $D$ and $U$, respectively.
Paths ending in $D$ must have an odd number of crossings, and so $\G^{\ge 2m+1,\ell;D}_{a,b}=\G^{\ge 2m,\ell;D}_{a,b}$ for all $m\ge0$. 
Assuming that $m\ge1$ (the case $m=0$ is solved in Lemma~\ref{lem:qbin}), for paths in this set, $C_{2m}$ is a downward crossing. As in Case~I, and noting that the maps from Lemma~\ref{lem:maj-tarsir} preserve the last step of the path, we obtain a composition of bijections
\begin{equation}\label{eq:si-ta2m}
\si_1\circ\ta_2\circ\dots\circ\si_{2m-1}\circ\ta_{2m}:\G^{\ge 2m+1,\ell;D}_{a,b}=\G^{\ge 2m,\ell;D}_{a,b}\to \G^{\ge 0,\ell+4i;D}_{a+2m,b-2m}=\G^D_{a+2m,b-2m}
\end{equation}
that changes $\maj$ by $-m(2m+1+\ell)$. This map can further be composed with the bijection 
\begin{equation}\label{eq:ta-circ}
\ta:\G^D_{a+2m,b-2m}\to\G^D_{b-2m-1,a+2m+1},
\end{equation}
which preserves $\maj$ by Equation~\eqref{eq:majta2}.

On the other hand, paths ending in $U$ must have an even number of crossings. For such paths, $C_{2m-1}$ is a downward crossing for all $m\ge1$. As in Case~IV, we obtain a bijection
\begin{equation}\label{eq:ta-ta2m-1}
\ta_1\circ\si_2\circ\dots\circ\ta_{2m-3}\circ\si_{2m-2}\circ\ta_{2m-1}:\G^{\ge 2m,\ell;U}_{a,b}=\G^{\ge 2m-1,\ell;U}_{a,b}\to \G^{\ge 0,-\ell-4i+2;U}_{a-\ell-2m+1,b+\ell+2m-1}=\G^U_{b-2m+1,a+2m-1}
\end{equation}
that changes $\maj$ by $-(m-1)(2m-1+\ell)$.
The last equality uses the fact that $\ell=a-b$. This map can be further composed with the bijection 
\begin{equation}\label{eq:si-circ}
\si:\G^U_{b-2m+1,a+2m-1}\to\G^D_{a+2m,b-2m},
\end{equation}
which changes $\maj$ by $-(\ell+4i-1)$, by Equation~\eqref{eq:majsi2}.

To prove the first formula in Equation~\eqref{eq:0<l=a-b}, we construct a bijection $\G^{\ge 2m,\ell}_{a,b}\to\G_{a+2m,b-2m}$ by combining the two bijections
\begin{align*}\si_1\circ\ta_2\circ\dots\circ\si_{2m-1}\circ\ta_{2m}&:\G^{\ge 2m,\ell;D}_{a,b}\to\G^D_{a+2m,b-2m},\\
\si\circ\ta_1\circ\si_2\circ\dots\circ\ta_{2m-1}&:\G^{\ge 2m,\ell;U}_{a,b}\to\G^U_{a+2m,b-2m},
\end{align*}
given by~\eqref{eq:si-ta2m}, and by composing~\eqref{eq:ta-ta2m-1} with~\eqref{eq:si-circ}, respectively.
Both bijections change the major index by $-m(2m+1+\ell)$. It follows that
$$G_{a,b}^{\ge 2m,\ell}(q)=q^{m(2m+1+\ell)}\sum_{Q\in \G_{a+2m,b-2m}}q^{\maj(Q)}=q^{m(2m+1+\ell)}\qbin{a+b}{a+2m}.$$

To prove the second formula in Equation~\eqref{eq:0<l=a-b}, we constuct a bijection $\G^{\ge 2m+1,\ell}_{a,b}\to\G_{b-2m-1,a+2m+1}$ by combining the two bijections
\begin{align*}
\ta_1\circ\si_2\circ\dots\circ\ta_{2m-1}\circ\si_{2m}\circ\ta_{2m+1}&:\G^{\ge 2m+1,\ell;U}_{a,b}\to \G^U_{b-2m-1,a+2m+1},\\
\ta\circ\si_1\circ\ta_2\circ\dots\circ\si_{2m-1}\circ\ta_{2m}&:\G^{\ge 2m+1,\ell;D}_{a,b}\to \G^D_{b-2m-1,a+2m+1},
\end{align*}
given by~\eqref{eq:ta-ta2m-1} with $m+1$ playing the role of $m$, and by composing~\eqref{eq:si-ta2m} with~\eqref{eq:ta-circ}.
Both change the major index by $-m(2m+1+\ell)$. Thus,
$$G_{a,b}^{\ge 2m+1,\ell}(q)=q^{m(2m+1+\ell)}\sum_{Q\in \G_{b-2m-1,a+2m+1}}q^{\maj(Q)}=q^{m(2m+1+\ell)}\qbin{a+b}{a+2m+1}.$$

\item {\bf Case VIII: $0>\ell=a-b$.} This is analogous to Case~VII.
Paths ending in $U$ now must have an odd number of crossings, and for such paths, $C_{2m}$ is an upward crossing for all $m\ge1$. As in Case~II, we have a bijection
$$
\ta_1\circ\si_2\circ\dots\circ\ta_{2m-1}\circ\si_{2m}:\G^{\ge 2m+1,\ell;U}_{a,b}=\G^{\ge 2m,\ell;U}_{a,b}\to 
\G^U_{a-2m,b+2m}
$$
that changes $\maj$ by $-m(2m-1-\ell)$, which can further be composed with the bijection 
$$
\si:\G^U_{a-2m,b+2m}\to\G^U_{b+2m+1,a-2m-1}
$$
that changes $\maj$ by $\ell-4i-1$, by Equation~\eqref{eq:majsi2}.

Paths ending in $D$ have an even number of crossings, and $C_{2m-1}$ is an upward crossing for all $m\ge1$. As in Case~III, we have a bijection
$$\si_1\circ\ta_2\circ\dots\circ\si_{2m-3}\circ\ta_{2m-2}\circ\si_{2m-1}:\G^{\ge 2m,\ell;D}_{a,b}=\G^{\ge 2m-1,\ell;D}_{a,b}\to 
\G^D_{b+2m-1,a-2m+1}
$$
that changes $\maj$ by $-m(2m-1-\ell)$, which can further be composed with the $\maj$-preserving bijection 
$$
\ta:\G^D_{b+2m-1,a-2m+1}\to\G^D_{a-2m,b+2m}.
$$

To prove the first formula in Equation~\eqref{eq:0>l=a-b}, we construct a bijection $\G^{\ge 2m,\ell}_{a,b}\to\G_{a+2m,b-2m}$ by combining the bijections
\begin{align*}
\ta_1\circ\si_2\circ\dots\circ\ta_{2m-1}\circ\si_{2m}&:\G^{\ge 2m,\ell;U}_{a,b}\to \G^U_{a-2m,b+2m},\\
\ta\circ\si_1\circ\ta_2\circ\dots\circ\si_{2m-1}&:\G^{\ge 2m,\ell;D}_{a,b}\to\G^D_{a-2m,b+2m},
\end{align*}
both of which change $\maj$ by $-m(2m-1-\ell)$. 

To prove the second formula in Equation~\eqref{eq:0>l=a-b}, we construct a bijection $\G^{\ge 2m+1,\ell}_{a,b}\to\G_{b+2m+1,a-2m-1}$ by combining the bijections
\begin{align*}
\si_1\circ\ta_2\circ\dots\circ\si_{2m-3}\circ\ta_{2m}\circ\si_{2m+1}&:\G^{\ge 2m+1,\ell;D}_{a,b}\to \G^D_{b+2m+1,a-2m-1},\\
\si\circ\ta_1\circ\si_2\circ\dots\circ\ta_{2m-1}\circ\si_{2m}&:\G^{\ge 2m+1,\ell;U}_{a,b}\to \G^U_{b+2m+1,a-2m-1},
\end{align*}
both of which change $\maj$ by $-(m+1)(2m+1-\ell)$. 

\item {\bf Case IX: $0=\ell=a-b$.}  This is the case $a=b$ of Theorem~\ref{thm:xaxis}.
For paths that end in $D$, removing the last step gives a $\maj$-preserving bijection between 
$\G^{\ge r,0;D}_{a,a}$ and $\G^{\ge r,0}_{a,a-1}$. By Equation~\eqref{eq:0=l<a-b}, proved in Case V, we get
\begin{equation}\label{eq:GDss}
\sum_{P\in \G^{\ge r,0;D}_{a,a}}q^{\maj(P)}=G^{\ge r,0}_{a,a-1}(q)=q^{\binom{r+1}{2}}\qbin{2a-1}{a+r}.
\end{equation}

For paths ending in $U$, the reflection $\rho$ gives a bijection $\rho:\G^{\ge r,0;U}_{a,a}\to\G^{\ge r,0;D}_{a,a}$ such that $\maj(\rho(P))=\maj(P)-a$, by Equation~\eqref{eq:majrho2}. Using Equation~\eqref{eq:GDss}, 
\begin{equation}\label{eq:GUss}
\sum_{P\in \G^{\ge r,0;U}_{a,a}}q^{\maj(P)}=q^a\sum_{P\in \G^{\ge r,0;D}_{a,a}}q^{\maj(P)}=q^{\binom{r+1}{2}+a}\qbin{2a-1}{a+r}.
\end{equation}
Adding Equations~\eqref{eq:GDss} and~\eqref{eq:GUss} proves Equation~\eqref{eq:0=l=a-b}.\qedhere
\end{list1}
\end{proof}

\section{Proofs for pairs of paths crossing each other}
\label{sec:pairs-proofs}

The goal of this section is to prove Theorems~\ref{thm:pairs} and~\ref{thm:pairs_refined}. While it is possible to prove Theorem~\ref{thm:pairs} using certain prefix-swapping bijections, as we will discuss in Section~\ref{sec:noq}, 
proving Theorem~\ref{thm:pairs_refined} requires more sophisticated bijections that keep track of the statistic $\maj$.
We will use these bijections, which rely on the maps $\bt$ and $\bs$ defined in Section~\ref{sec:ingredients}, to prove Theorems~\ref{thm:pairs} and~\ref{thm:pairs_refined} simultaneously.
In the rest of the paper, all paths consist of $N$ and $E$ steps, and the term {\em crossing} always refers to a crossing of two paths. 
Let $A_1,A_2,B_1,B_2,C\in\Z^2$ be arbitrary points, where $A_1=(x_1,y_1)$ and $A_2=(x_2,y_2)$, and let $\vv=(1,-1)$.

We start by stating an immediate consequence of Lemma~\ref{lem:maj}.

\begin{lemma}\label{lem:uptoC}
If $P_\lt\in\P_{A_1\to C}^N$ and $Q_\lt\in\P_{A_2\to C}^E$, then 
$$\maj(\bs(P_\lt))+\maj(\bt(Q_\lt))=\maj(P_\lt)+\maj(Q_\lt)-(x_2-x_1+1).$$
\end{lemma}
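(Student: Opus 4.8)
The plan is to reduce the statement directly to the two major-index formulas for $\bs$ and $\bt$ recorded in Lemma~\ref{lem:maj}, since the hypotheses on $P_\lt$ and $Q_\lt$ are tailored precisely so that these maps apply. I would first name the common endpoint $C=(u,v)$ and check that the maps are defined on the right domains: since $P_\lt$ ends in $N$ it lies in $\P_{(x_1,y_1)\to(u,v)}^N$, the domain of $\bs$, and since $Q_\lt$ ends in $E$ it lies in $\P_{(x_2,y_2)\to(u,v)}^E$, the domain of $\bt$. No genuine difficulty arises here; it is purely a matter of matching the domains.

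Next I would apply Equation~\eqref{eq:majbs} to $P_\lt$ and Equation~\eqref{eq:majbt} to $Q_\lt$, obtaining
$$\maj(\bs(P_\lt))=\maj(P_\lt)-u+x_1 \qquad\text{and}\qquad \maj(\bt(Q_\lt))=\maj(Q_\lt)+u-x_2-1.$$
The one observation that carries the whole lemma is that both paths terminate at the \emph{same} point $C$, and hence share the same first coordinate $u$. Adding the two equations therefore cancels the terms $-u$ and $+u$, leaving
$$\maj(\bs(P_\lt))+\maj(\bt(Q_\lt))=\maj(P_\lt)+\maj(Q_\lt)+x_1-x_2-1,$$
and rewriting $x_1-x_2-1=-(x_2-x_1+1)$ gives exactly the claimed identity.

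I expect no real obstacle. Conceptually, the content is that $\bs$ displaces the starting abscissa of its argument by $-1$ and $\bt$ displaces it by $+1$ (reflecting the shifts by $-\vv$ and $+\vv$ in the definitions), and these opposite displacements alter $\maj$ through terms that depend only on the horizontal extents $u-x_1$ and $u-x_2$. Because the endpoint is shared, the common $u$ drops out and only the gap $x_2-x_1$ between the two initial abscissas survives, together with the $-1$ coming from the asymmetry between \eqref{eq:majbs} and \eqref{eq:majbt}.
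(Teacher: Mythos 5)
Your proof is correct and is essentially identical to the paper's: both apply Equation~\eqref{eq:majbs} to $P_\lt$ and Equation~\eqref{eq:majbt} to $Q_\lt$ and add, with the shared abscissa $u$ of $C$ cancelling. Your additional remarks on domains and on the conceptual role of the opposite shifts are accurate but not needed beyond what the paper records.
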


\begin{proof}
Suppose that $C=(u,v)$. Then, by Lemma~\ref{lem:maj},
$\maj(\bs(P_\lt))=\maj(P_\lt)-u+x_1$, and 
$\maj(\bt(Q_\lt))=\maj(Q_\lt)+u-x_2-1$. Adding these two equations gives the stated formula.
\end{proof}

Our next task is to define an involution on certain pairs of intersecting paths.
Let $\{\de,\bde\}=\{1,2\}$, and define $\pathsNC{A_1}{B_\de}{A_2}{B_\bde}$ to be the subset of $\paths{A_1}{B_\de}{A_2}{B_\bde}$ consisting of pairs $(P,Q)$ where $C$ is a common point of $P$ and $Q$, the step of $P$ that ends at $C$ is an $N$, and the step of $Q$ that ends at $C$ is an $E$.

\begin{definition}\label{def:phiC}
For $(P,Q)\in\pathsNC{A_1}{B_\de}{A_2}{B_\bde}$, write $P=P_\lt P_\rt$ and $Q=Q_\lt Q_\rt$ by splitting both paths at $C$,
and let $$\phiC(P,Q)=\left(\bt(Q_\lt)P_\rt,\bs(P_\lt)Q_\rt\right).$$
\end{definition}

See Figure~\ref{fig:phiC} for an example.

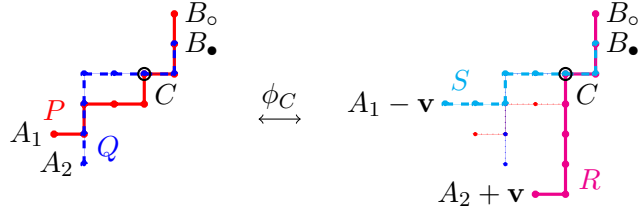
\begin{figure}[htb]
\centering
\begin{tikzpicture}[scale=0.4]
    \draw[red,very thick,fill](0,1) \start\E\N\E\E\N\E\N\N;
    \draw[red] (0,1.8) node {$P$};
    \draw[blue,dashed,very thick,fill](1,0) \start\N\N\N\E\E\E\N;
    \draw[blue] (1.8,0.5) node {$Q$};
	\draw (0,1) node[left] {$A_1$};
	\draw (1,0) node[left] {$A_2$};     
	\draw (4,5) node[right] {$B_\de$};
	\draw (4,4) node[right] {$B_\bde$};    
\crossing{3}{3};
	\draw (3,3) node[below right] {$C$}; 
	\draw[<->] (6.8,1.5)-- node[above]{$\phiC$} (8.2,1.5);

\begin{scope}[shift={(14,0)}] 
    \draw[red,very thin,dotted,fill](0,1) \start\E\N\E\E\N;
    \draw[blue,very thin,dotted,fill](1,0) \start\N\N\N\E\E;
    \draw[magenta,very thick,fill](2,-1) \start\E\N\N\N\N\E\N\N;
    \draw[magenta] (3.8,-.5) node {$R$};
    \draw[cyan,dashed,very thick,fill](-1,2) \start\E\E\N\E\E\E\N;
    \draw[cyan] (-.5,2.8) node {$S$};
	\draw (-1,2) node[left] {$A_1-\vv$};
	\draw (2,-1) node[left]{$A_2+\vv$};     
	\draw (4,5) node[right] {$B_\de$};
	\draw (4,4) node[right] {$B_\bde$};   
\crossing{3}{3};
	\draw (3,3) node[below right] {$C$}; 
\end{scope}
\end{tikzpicture}
\caption{An example of the bijection $\phiC$.}
\label{fig:phiC}
\end{figure}

\begin{lemma}\label{lem:phiC}
The map from Definition~\ref{def:phiC} is a bijection
$$\phiC:\pathsNC{A_1}{B_\de}{A_2}{B_\bde}\to \pathsNC{A_2+\vv}{B_\de}{A_1-\vv}{B_\bde}.$$
Additionally, if condition~\eqref{condition} holds and $\phiC(P,Q)=(R,S)$, then
\begin{equation}\label{eq:majP'Q'C}
\maj(R)+\maj(S)=\maj(P)+\maj(Q)-(x_2-x_1+1).
\end{equation}
\end{lemma}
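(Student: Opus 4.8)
The plan is to prove the two claims of Lemma~\ref{lem:phiC} separately: first that $\phiC$ is a well-defined bijection between the stated sets, and then the major index identity~\eqref{eq:majP'Q'C}.

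For the bijection claim, I would begin by tracking endpoints. Given $(P,Q)\in\pathsNC{A_1}{B_\de}{A_2}{B_\bde}$, the prefix $P_\lt\in\P_{A_1\to C}^N$ (since the step of $P$ arriving at $C$ is an $N$) and $Q_\lt\in\P_{A_2\to C}^E$. By Lemma~\ref{lem:bsbt}, $\bt(Q_\lt)\in\P_{A_2+\vv\to C}^N$ and $\bs(P_\lt)\in\P_{A_1-\vv\to C}^E$. Concatenating with the unchanged suffixes $P_\rt$ (starting at $C$, leaving $B_\de$) and $Q_\rt$ (starting at $C$, leaving $B_\bde$), I get $R=\bt(Q_\lt)P_\rt\in\P_{A_2+\vv\to B_\de}$ and $S=\bs(P_\lt)Q_\rt\in\P_{A_1-\vv\to B_\bde}$. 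Crucially, the step of $R$ arriving at $C$ is an $N$ (from $\bt(Q_\lt)\in\P^N$) and the step of $S$ arriving at $C$ is an $E$ (from $\bs(P_\lt)\in\P^E$), so the pair $(R,S)$ indeed lies in $\pathsNC{A_2+\vv}{B_\de}{A_1-\vv}{B_\bde}$, with the same marked point $C$. To see that $\phiC$ is a bijection, I would observe that the analogous map in the reverse direction uses $\bs$ on the $N$-ending prefix of $R$ and $\bt$ on the $E$-ending prefix of $S$; since $\bs$ and $\bt$ are mutually inverse on $\P^N$ and $\P^E$ by Lemma~\ref{lem:bsbt}, and the suffixes are untouched, this reverse map is a two-sided inverse. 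Thus $\phiC$ is a bijection.

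For the major index identity, I would use additivity of $\maj$ under the concatenation at $C$ (both the old and new paths pass through $C$, so the valleys split cleanly into those strictly before $C$ and those from $C$ onward, and the suffix contributions are identical for $P$ versus $R$ and for $Q$ versus $S$). This reduces the claim to the prefixes, namely
\[
\maj(\bt(Q_\lt))+\maj(\bs(P_\lt))=\maj(P_\lt)+\maj(Q_\lt)-(x_2-x_1+1),
\]
which is exactly Lemma~\ref{lem:uptoC} applied to $P_\lt\in\P_{A_1\to C}^N$ and $Q_\lt\in\P_{A_2\to C}^E$. Adding the (equal) suffix contributions back to both sides yields~\eqref{eq:majP'Q'C}.

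The one point that deserves genuine care — and that I expect to be the main subtlety rather than a hard obstacle — is the additivity of $\maj$ across the splitting point $C$. A valley of a concatenated path $P_\lt P_\rt$ is either a valley internal to one of the pieces, or possibly a valley created exactly at the junction $C$ (a vertex preceded by an $E$ and followed by an $N$). In $P=P_\lt P_\rt$ the junction cannot be a valley because $P_\lt$ ends in $N$; in $R=\bt(Q_\lt)P_\rt$ the junction also cannot be a valley because $\bt(Q_\lt)$ ends in $N$. Symmetrically, for $Q$ the junction is not a valley (the arriving step is $E$, but we need arriving $E$ \emph{and} leaving $N$; since $Q_\lt$ ends in $E$, the junction is a valley of $Q$ iff $Q_\rt$ starts with $N$), and the identical condition holds for $S=\bs(P_\lt)Q_\rt$ since $\bs(P_\lt)$ also ends in $E$. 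Hence the junction contributes to $\maj(Q)$ exactly when it contributes to $\maj(S)$, and these contributions cancel because the position of $C$ (its coordinate sum minus the respective starting-point coordinate sum) is unchanged under condition~\eqref{condition}: we have $x_1+y_1=x_2+y_2$, so $A_2+\vv$ and $A_1-\vv$ lie on the same slope-$(-1)$ line as $A_1,A_2$, making the valley-position of any fixed vertex identical across all four paths. This last observation is precisely where hypothesis~\eqref{condition} is used, so I would make sure to invoke it explicitly when asserting that suffix contributions match term by term.
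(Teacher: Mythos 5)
Your proposal is correct and follows essentially the same route as the paper's proof: endpoint-tracking via Lemma~\ref{lem:bsbt}, the observation that $\phiC$ applied on the image domain is a two-sided inverse because $\bs$ and $\bt$ are mutually inverse, and the reduction of~\eqref{eq:majP'Q'C} to Lemma~\ref{lem:uptoC} after noting that the suffix contributions match thanks to condition~\eqref{condition}. Your explicit check of whether the junction vertex $C$ can itself be a valley is a point the paper passes over more quickly, but it is handled correctly and does not change the argument.
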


\begin{proof}
For $(P,Q)\in\pathsNC{A_1}{B_\de}{A_2}{B_\bde}$, the decomposition in Definition~\ref{def:phiC} yields paths $P_\lt\in\P_{A_1\to C}^N$ and $Q_\lt\in\P_{A_2\to C}^E$.
Thus, by Lemma~\ref{lem:bsbt}, $\bt(Q_\lt)\in\P_{A_2+\vv\to C}^N$ and $\bs(P_\lt)\in\P_{A_1-\vv\to C}^E$.
It follows that
$$R=\bt(Q_\lt)P_\rt\in \P_{A_2+\vv\to B_\de} \quad\text{and}\quad  S=\bs(P_\lt)Q_\rt\in \P_{A_1-\vv\to B_\bde},$$
that $C$ is a common point of $R$ and $S$, and that the step of $R$ (resp.\ $S$) that ends $C$ is an $N$ (resp.\ $E$). Thus, $(R,S)\in\pathsNC{A_2+\vv}{B_\de}{A_1-\vv}{B_\bde}$.

Next we show that the inverse of $\phiC$ is given by the same map $\phiC$ on the appropriate domain $\pathsNC{A_2+\vv}{B_\de}{A_1-\vv}{B_\bde}$. Indeed, for $(R,S)$ as above, we have $\phiC(R,S)=(P,Q)$, using the fact that $\bt(\bs(P_\lt))=P_\lt$ and $\bs(\bt(Q_\lt))=Q_\lt$ by Lemma~\ref{lem:bsbt}.

Finally, let us compare $\maj(R)+\maj(S)$ to $\maj(P)+\maj(Q)$, assuming that condition~\eqref{condition} holds.
The contributions from valleys occurring at or after $C$ are the same for both sums, since both $R$ and $P$ end with $NP_\rt$, both $S$ and $Q$ end with $EQ_\rt$, and each of the four starting points $A_1$, $A_2$, $A_1-\vv$, $A_2+\vv$ has the same coordinate sum because of condition~\eqref{condition}. On the other hand, the contributions from valleys occurring before $C$
change according to Lemma~\ref{lem:uptoC}:
$$\maj(\bs(P_\lt))+\maj(\bt(Q_\lt))=\maj(P_\lt)+\maj(Q_\lt)-(x_2-x_1+1).$$
Equation~\eqref{eq:majP'Q'C} now follows.
\end{proof}

Let $r\ge0$. Given a pair $(P,Q)\in\pathsP{A_1}{B_\de}{A_2}{B_\bde}{r}$, let $C_j=C_j(P,Q)$ denote the $j$th crossing of $P$ and $Q$ starting from the right, for $1\le j\le r$. In the special case that $B_\de=B_\bde$ (call this point $B$),
we additionally define $C_0=C_0(P,Q)$ to be the last common vertex of $P$ and $Q$ when reading these paths backwards from $B$; in other words, $C_0$ is such that the maximal common suffix of $P$ and $Q$ has endpoints $C_0$ and $B$. 

For $r\ge0$, define $\pathsN{A_1}{B_\de}{A_2}{B_\bde}{r}$ (resp.\ $\pathsE{A_1}{B_\de}{A_2}{B_\bde}{r}$)
to be the subset of $\pathsP{A_1}{B_\de}{A_2}{B_\bde}{r}$ consisting of pairs $(P,Q)$ for which $C_r$ is defined (this condition is only meaningful when $r=0$), and such that the step of $P$ that ends at $C_r$ is an $N$ (resp.\ an $E$), and the step of $Q$ that ends at $C_r$ is an $E$ (resp.\ an $N$). 
Let $\varsigma$ be the involution on pairs of paths defined by 
\begin{equation}\label{eq:swap}
\varsigma(P,Q)=(Q,P).
\end{equation}
Note that $\varsigma$ restricts to a bijection between $\pathsN{A_1}{B_\de}{A_2}{B_\bde}{r}$ and $\pathsE{A_2}{B_\bde}{A_1}{B_\de}{r}$, and that it trivially preserves the total major index.

Suppose that $r\ge0$ if $B_\de=B_\bde$ and $A_1\prec A_2$, and that $r\ge1$ otherwise. Then we have the decomposition
\begin{equation}\label{eq:decomp0}
\pathsP{A_1}{B_\de}{A_2}{B_\bde}{r}=\pathsN{A_1}{B_\de}{A_2}{B_\bde}{r}\sqcup\pathsE{A_1}{B_\de}{A_2}{B_\bde}{r},
\end{equation}
where $\sqcup$ denotes disjoint union. Note that, if $B_\de\prec B_\bde$, then the first set in the right-hand side of Equation~\eqref{eq:decomp0} is empty for even $r$, and the second one is empty for odd $r$; if $B_\bde\prec B_\de$, a similar statement holds with the parities reversed.
Now we are ready to define~$\bij_r$.

\begin{definition}\label{def:bijr}
Let $r\ge0$ if $B_\de=B_\bde$ and $A_1\prec A_2$, and let $r\ge1$ otherwise.
For $(P,Q)\in\pathsN{A_1}{B_\de}{A_2}{B_\bde}{r}$, let $C=C_r(P,Q)$, and define 
$$\bij_r(P,Q)=\varsigma(\phiC(P,Q)).$$
\end{definition}

See the examples in Figure~\ref{fig:Bij}.

\begin{figure}[htb]
\centering
\begin{tikzpicture}[scale=0.4]
    \draw[red,very thick,fill](0,1) \start\N\N\E\E\E\N\N\N\N\E\E\N\E\N\E\E;
    \draw[red] (0,2.5) node[left] {$P$};
    \draw[blue,dashed,very thick,fill](1,0) \start\N\N\N\N\E\N\E\E\N\N\E\E\N\N\N\N\E;
    \draw[blue] (1,1.5) node[right] {$Q$};
\crossing{3}{5};
	\draw (3,5) node[below right] {$C_2$};
\crossing{6}{8};
	\draw (6,8) node[right] {$C_1$};
	\draw (0,1) node[left] {\small $A_1$};
	\draw (1,0) node[left] {\small $A_2$};     
	\draw (7,11) node[right] {\small $B_1$};
	\draw (8,9) node[right] {\small $B_2$};     
	\draw[->] (8.5,5.5)-- node[above]{$\bij_2$} (9.5,5.5);

\begin{scope}[shift={(12,0)}] 
    \draw[red,very thin,dotted,fill](0,1) \start\N\N\E\E\E\N\N;
    \draw[magenta,very thick,fill](-1,2) \start\N\E\N\N\E\E\E \E\N\N\E\E\N\N\N\N\E ;
    \draw[magenta] (0,4.5) node[left] {$P'$};
    \draw[blue,dotted,very thin,fill](1,0) \start\N\N\N\N\E\N\E;
    \draw[cyan,dashed,very thick,fill](2,-1) \start\N\N\N\N\N\E\N \N\N\E\E\N\E\N\E\E;
    \draw[cyan] (2,1.5) node[right] {$Q'$};
\crossing{3}{5};
	\draw (3,5) node[below right] {$C_2$};
\crossing{6}{8};
	\draw (6,8) node[right] {$C_1$};
	\draw (-1,2) node[left] {\small $A_1-\vv$};
	\draw (2,-1) node[left] {\small $A_2+\vv$};     
	\draw (7,11) node[right] {\small $B_1$};
	\draw (8,9) node[right] {\small $B_2$};     
	\draw[->] (8.5,5.5)-- node[above]{$\bij_1$} (9.5,5.5);
\end{scope}

\begin{scope}[shift={(26,0)}] 
    \draw[magenta,very thin,dotted,fill](-1,2) \start\N\E\N\N\E\E\E\E\N\N\E\E\N;
    \draw[orange,very thick,fill](-2,3) \start \E\N\N\E\N\N\E\E\E\E\N\E\E \N\E\E;
    \draw[orange] (-1,4.5) node[left] {$\hat P=P''$};
    \draw[cyan,dotted,very thin,fill](2,-1) \start\N\N\N\N\N\E\N\N\N\E\E\N\E ;
    \draw[violet,dashed,very thick,fill](3,-2) \start \N\N\N\N\N\N\E\E\N\N\N\E\N \N\N\N\E;
    \draw[violet] (3,1.5) node[right] {$\hat Q=Q''$};
\crossing{6}{8};
	\draw (6,8) node[right] {$C_1$};
	\draw (-2,3) node[left] {\small $A_1-2\vv$};
	\draw (3,-2) node[left] {\small $A_2+2\vv$};     
	\draw (7,11) node[right] {\small $B_1$};
	\draw (8,9) node[right] {\small $B_2$};     
\end{scope}

\end{tikzpicture}
\caption{An example of the bijection $\Bij_2$ in Case~1, as a composition $(P,Q)\stackrel{\bij_2}{\mapsto}(P',Q')\stackrel{\bij_1}{\mapsto}(P'',Q'')$. Note that condition~\eqref{condition} holds, and that $\maj(P)+\maj(Q)=(5+11+13)+(5+8+12)=54$, $\maj(P')+\maj(Q')=(2+8+12)+(6+11+13)=52$, and $\maj(P'')+\maj(Q'')=(1+4+10+13)+(8+12)=48$. Thus, $\bij_2$ decreases the total major index by $x_2-x_1+1=2$, and $\bij_1$ decreases it by $(x_2+1)-(x_1-1)+1=4$.
}
\label{fig:Bij}
\end{figure}

\begin{lemma}\label{lem:bijr}
The map from Definition~\ref{def:bijr} is a bijection
$$\bij_r:\pathsN{A_1}{B_\de}{A_2}{B_\bde}{r}\to\pathsE{A_1-\vv}{B_\bde}{A_2+\vv}{B_\de}{r}.$$
Additionally, if condition~\eqref{condition} holds and $\bij_r(P,Q)=(P',Q')$, then
\begin{equation}\label{eq:majP'Q'}
\maj(P')+\maj(Q')=\maj(P)+\maj(Q)-(x_2-x_1+1).
\end{equation}
\end{lemma}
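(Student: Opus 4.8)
The plan is to prove Lemma~\ref{lem:bijr} by assembling the already-established properties of the two constituent maps, $\phiC$ and $\varsigma$, and checking that their composition has the claimed domain, codomain, bijectivity, and effect on the total major index. The key observation is that $\bij_r$ is defined as $\varsigma\circ\phiC$ applied at the distinguished crossing $C=C_r(P,Q)$, so essentially everything should follow from Lemma~\ref{lem:phiC} (which handles $\phiC$) together with the trivial behavior of $\varsigma$.

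First I would verify the domain and codomain. Starting from $(P,Q)\in\pathsN{A_1}{B_\de}{A_2}{B_\bde}{r}$, the vertex $C=C_r(P,Q)$ is a crossing at which the step of $P$ arriving at $C$ is an $N$ and the step of $Q$ arriving at $C$ is an $E$; this is exactly the condition making $(P,Q)$ lie in $\pathsNC{A_1}{B_\de}{A_2}{B_\bde}$, so Lemma~\ref{lem:phiC} applies and gives $\phiC(P,Q)=(R,S)\in\pathsNC{A_2+\vv}{B_\de}{A_1-\vv}{B_\bde}$. Applying $\varsigma$ then swaps the two paths, producing a pair $(P',Q')=(S,R)$ with $P'=\bs(P_\lt)Q_\rt\in\P_{A_1-\vv\to B_\bde}$ and $Q'=\bt(Q_\lt)P_\rt\in\P_{A_2+\vv\to B_\de}$. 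Here the key point I must check is that $C$ is still the $r$th crossing from the right of the new pair, and that it is now an $\mathcal{E}$-type crossing rather than an $\mathcal{N}$-type one: since $\phiC$ only alters the paths strictly before $C$ (the suffixes $P_\rt,Q_\rt$ are untouched), the crossings of $(R,S)$ at or after $C$ coincide with those of $(P,Q)$, so $C$ remains the $r$th crossing; and the swap $\varsigma$ interchanges which path arrives at $C$ by $N$ versus $E$, landing us precisely in $\pathsE{A_1-\vv}{B_\bde}{A_2+\vv}{B_\de}{r}$. I should also confirm that the portion of each path before $C$ still crosses correctly, but this is exactly the content of the $\pathsNC{}{}{}{}$ membership established by Lemma~\ref{lem:phiC}, so no further crossing bookkeeping is needed before $C$ either.

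For bijectivity I would argue that $\bij_r$ is invertible by exhibiting its inverse in the same form. Since $\varsigma$ is an involution and $\phiC$ is its own inverse on the appropriate domain (Lemma~\ref{lem:phiC}), and since the distinguished crossing $C_r$ is preserved by the construction, the inverse of $\bij_r$ is the analogous map $\varsigma\circ\phiC$ defined on $\pathsE{A_1-\vv}{B_\bde}{A_2+\vv}{B_\de}{r}$ using $C=C_r$ of the image pair; one checks directly that applying it to $(P',Q')$ recovers $(P,Q)$, using that $C_r$ is common to both and that $\bt,\bs$ are mutually inverse. The major-index statement is then immediate: $\varsigma$ preserves the total major index, and by Equation~\eqref{eq:majP'Q'C} of Lemma~\ref{lem:phiC}, $\phiC$ decreases it by exactly $x_2-x_1+1$ when condition~\eqref{condition} holds; composing gives Equation~\eqref{eq:majP'Q'}.

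The main obstacle I anticipate is the crossing bookkeeping in the first step, specifically confirming that $C$ genuinely remains the $r$th crossing from the right after applying $\phiC$ and then $\varsigma$, and that no spurious crossings are created or destroyed among the first $r$. The subtlety is that $\phiC$ rewrites the entire prefixes before $C$ via $\bt$ and $\bs$, which can drastically change the shapes of $P_\lt$ and $Q_\lt$; one must argue that whatever happens strictly before $C$ does not affect the count of crossings at or after $C$, since a crossing is determined locally by the first place the paths disagree after a common vertex together with the step-types arriving there. Because the suffixes $P_\rt,Q_\rt$ are copied verbatim, the crossings in that range are unchanged, and the swap only relabels the type of $C$ itself; this is precisely why the image lands in the $\mathcal{E}$-variant with the same crossing index $r$. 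Once this is pinned down, the remaining steps are routine consequences of the lemmas already proved.
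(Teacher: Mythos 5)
Your proof is correct and follows essentially the same route as the paper's: apply Lemma~\ref{lem:phiC} at $C=C_r$, observe that the suffixes after $C$ (hence the rightmost $r$ crossings) are untouched so $C$ remains the $r$th crossing of the image pair, compose with $\varsigma$, and read off the major-index shift from Equation~\eqref{eq:majP'Q'C}. The one detail worth stating explicitly is the case $r=0$ with $B_\de=B_\bde$, where $C_0$ is not a crossing but the endpoint of the maximal common suffix; your ``suffixes copied verbatim'' observation still covers it, since that common suffix and the differing arrival steps at $C_0$ are both preserved by $\phiC$.
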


\begin{proof}
Given a pair $(P,Q)\in\pathsN{A_1}{B_\de}{A_2}{B_\bde}{r}$, applying $\phiC$ with $C=C_r(P,Q)$ preserves the suffixes of the paths after $C$. In particular, it preserves the rightmost $r$ crossings $C_1,C_2,\dots,C_r$, and also $C_0$ in the case $r=0$.
Combined with Lemma~\ref{lem:phiC}, this implies that $\phiC$ induces a bijection from $\pathsN{A_1}{B_\de}{A_2}{B_\bde}{r}$ to $\pathsN{A_2+\vv}{B_\de}{A_1-\vv}{B_\bde}{r}$. Composing with $\varsigma$ yields a bijection to $\pathsE{A_1-\vv}{B_\bde}{A_2+\vv}{B_\de}{r}$.
 Equation~\eqref{eq:majP'Q'} follows trivially from Equation~\eqref{eq:majP'Q'C}.
\end{proof}

\begin{proof}[Proof of Theorems~\ref{thm:pairs} and~\ref{thm:pairs_refined}]
We separate cases according to which endpoints of the paths coincide. We will prove both theorems in parallel, requiring condition~\eqref{condition} only for the refined formulas in Theorem~\ref{thm:pairs_refined} that keep track of $\maj$. We use the notation $C_j=C_j(P,Q)$ defined above. The right equality in Equations~\eqref{eq:switched-noq} and~\eqref{eq:switched} for $m=0$, as well as Equations \eqref{eq:A1=A2-noq}--\eqref{eq:A1=A2,B1=B2-noq} and \eqref{eq:A1=A2}--\eqref{eq:A1=A2,B1=B2} for $r=0$ are implied by Equations~\eqref{eq:m=0noq} and~\eqref{eq:m=0}, so we will assume that $m\ge1$ and $r\ge1$ when proving these.

\begin{list1}
\item {\bf Case 1:} endpoints $A_1\prec A_2$ and $B_1\prec B_2$.
If $P\in\P_{A_1\to B_2}$ and $Q\in\P_{A_2\to B_1}$, then $\cro(P,Q)$ must be odd, because of the relative position of the endpoints of the two paths. Additionally, the step of $P$ that ends at $C_j$ is an $N$ for even $j$, and it is an $E$ for odd $j$. 
It follows that, for $m\ge1$,
\begin{equation}\label{eq:odd=even}
\pathsE{A_1}{B_2}{A_2}{B_1}{2m+1}=\pathsP{A_1}{B_2}{A_2}{B_1}{2m+1}=\pathsP{A_1}{B_2}{A_2}{B_1}{2m}=\pathsN{A_1}{B_2}{A_2}{B_1}{2m},
\end{equation}
and that the first two equalities also hold for $m=0$. This implies the left equality in Equations~\eqref{eq:switched-noq} and~\eqref{eq:switched} for all $m\ge0$.

Similarly, if $P\in\P_{A_1\to B_1}$ and $Q\in\P_{A_2\to B_2}$, then $\cro(P,Q)$ must be even. Now the step of $P$ that ends at $C_j$ is an $N$ for odd $j$, and it is an $E$ for even $j$.
Thus, for $m\ge1$,
\begin{equation}\label{eq:even=odd}
\pathsE{A_1}{B_1}{A_2}{B_2}{2m}=\pathsP{A_1}{B_1}{A_2}{B_2}{2m}=\pathsP{A_1}{B_1}{A_2}{B_2}{2m-1}=\pathsN{A_1}{B_1}{A_2}{B_2}{2m-1},
\end{equation}
proving the left equality in Equations~\eqref{eq:same-noq} and~\eqref{eq:same}.

To prove the right equalities in these four equations, let us assume that $m\ge1$. Setting $r=2m$ and $r=2m-1$ in Lemma~\ref{lem:bijr}, respectively, and using Equations~\eqref{eq:odd=even} and~\eqref{eq:even=odd}, which also hold for the initial points $A_1-\vv\prec A_2+\vv$, we get bijections
$$\begin{array}{rcl}
\pathsP{A_1}{B_2}{A_2}{B_1}{2m}=\pathsN{A_1}{B_2}{A_2}{B_1}{2m}&\stackrel{\bij_{2m}}{\to}&
\pathsE{A_1-\vv}{B_1}{A_2+\vv}{B_2}{2m}=\pathsP{A_1-\vv}{B_1}{A_2+\vv}{B_2}{2m-1},\\
\pathsP{A_1}{B_1}{A_2}{B_2}{2m-1}=\pathsN{A_1}{B_1}{A_2}{B_2}{2m-1}&\stackrel{\bij_{2m-1}}{\to}&
\pathsE{A_1-\vv}{B_2}{A_2+\vv}{B_1}{2m-1}=\pathsP{A_1-\vv}{B_2}{A_2+\vv}{B_1}{2m-2}
\end{array}.$$

Thus, the compositions $\Bij_{r}=\bij_{1}\circ\bij_{2}\circ\dots\circ\bij_{r}$ give bijections
\begin{align*}
\Bij_{2m}&:\pathsP{A_1}{B_2}{A_2}{B_1}{2m}\to\paths{A_1-2m\vv}{B_2}{A_2+2m\vv}{B_1},\\
\Bij_{2m-1}&:\pathsP{A_1}{B_1}{A_2}{B_2}{2m-1}\to\paths{A_1-(2m-1)\vv}{B_2}{A_2+(2m-1)\vv}{B_1}
\end{align*}
for all $m\ge1$. An example of the bijection $\Bij_2$ is given in Figure~\ref{fig:Bij}.
Equations~\eqref{eq:switched-noq} and~\eqref{eq:same-noq} immediately follow using Equation~\eqref{eq:m=0noq} and the fact that $A_1-r\vv=(x_1-r,y_1+r)$ and $A_2+r\vv=(x_2+r,y_2-r)$.

Let us now assume that condition~\eqref{condition} holds. This implies that the sum of the two coordinates is the same for all the points of the form $A_1-j\vv$ and $A_2+j\vv$.
If we let $\Bij_{r}(P,Q)=(\hat P,\hat Q)$, then repeated applications of Lemma~\ref{lem:bijr} give 
\begin{align}\nonumber
\maj(\hat P)+\maj(\hat Q)&=\maj(P)+\maj(Q)-(x_2-x_1+1)-(x_2-x_1+3)-\dots-(x_2-x_1+2r-1)\\
&=\maj(P)+\maj(Q)-r(r+x_2-x_1).
\label{eq:maj-Bijr}
\end{align}
For $r=2m$, this property of the bijection $\Bij_{2m}$, together with Equation~\eqref{eq:m=0}, implies that
\begin{align*}H^{\ge 2m}_{A_1\to B_2,A_2\to B_1}(q)&=q^{2m(2m+x_2-x_1)}H^{\ge 0}_{A_1-2m\vv\to B_2,A_2+2m\vv\to B_1}(q)\\
&=q^{2m(2m+x_2-x_1)}\qbin{u_2-x_1+v_2-y_1}{u_2-x_1+2m}\qbin{u_1-x_2+v_1-y_2}{u_1-x_2-2m}=f_{2m,A_1,A_2,B_2,B_1}(q),\end{align*}
proving Equation~\eqref{eq:switched}. A similar argument for $r=2m-1$ proves Equation~\eqref{eq:same}. 

\item {\bf Case 2:} endpoints $A$ and $B_1\prec B_2$. Assume that $r\ge1$, and let $(P,Q)\in\pathsP{A}{B_1}{A}{B_2}{r}$. The relative position of $B_1$ and $B_2$ forces the step of $P$ that ends at $C_r$ to be an $N$ if $r$ is odd, and an $E$ if $r$ is even. Thus, writing $r=2m+1$ or $r=2m$ accordingly, we have
$$\pathsP{A}{B_1}{A}{B_2}{2m+1}=\pathsN{A}{B_1}{A}{B_2}{2m+1}\quad \text{and} \quad \pathsP{A}{B_1}{A}{B_2}{2m}=\pathsE{A}{B_1}{A}{B_2}{2m}.$$
By Lemma~\ref{lem:bijr} and Equation~\eqref{eq:odd=even} with initial points $A-\vv\prec A+\vv$, in the odd case we get a bijection
$$\pathsP{A}{B_1}{A}{B_2}{2m+1}=\pathsN{A}{B_1}{A}{B_2}{2m+1}\stackrel{\bij_{2m+1}}{\to}\pathsE{A-\vv}{B_2}{A+\vv}{B_1}{2m+1}=\pathsP{A-\vv}{B_2}{A+\vv}{B_1}{2m}.$$
In the even case, we first apply the swap $\varsigma$ from Equation~\eqref{eq:swap}, which gives a bijection 
$$\pathsP{A}{B_1}{A}{B_2}{2m}=\pathsE{A}{B_1}{A}{B_2}{2m}\stackrel{\varsigma}{\to}\pathsN{A}{B_2}{A}{B_1}{2m},$$ 
and then use Lemma~\ref{lem:bijr} and Equation~\eqref{eq:even=odd} to get a bijection
$$\pathsN{A}{B_2}{A}{B_1}{2m}\stackrel{\bij_{2m}}{\to}\pathsE{A-\vv}{B_1}{A+\vv}{B_2}{2m}=\pathsP{A-\vv}{B_1}{A+\vv}{B_2}{2m-1}.$$
The images of the above maps $\bij_{2m+1}$ and $\bij_{2m}$ consist of pairs of paths where neither the starting nor the final points coincide, so we can apply to these sets the bijections $\Bij_{2m}$ and $\Bij_{2m-1}$ as in Case~1, respectively.

For $r=2m+1$, the composition $\Bij_{2m}\circ\bij_{2m+1}$ yields a bijection
$$\Bij_{2m+1}:\pathsP{A}{B_1}{A}{B_2}{2m+1}\to\paths{A-(2m+1)\vv}{B_2}{A+(2m+1)\vv}{B_1},$$
and for $r=2m$, the composition $\Bij_{2m-1}\circ\bij_{2m}\circ\varsigma$ yields a bijection
$$\Bij_{2m}\circ\varsigma:\pathsP{A}{B_1}{A}{B_2}{2m}\to\paths{A-2m\vv}{B_2}{A +2m\vv}{B_1}.$$
These two bijections, together with Equation~\eqref{eq:m=0noq}, prove Equation~\eqref{eq:A1=A2-noq} for both odd and even $r$.

Additionally, condition~\eqref{condition} holds for the initial points in all the above sets, since all points of the form $A+ j\vv$ for $j\in\Z$ have the same coordinate sum. Using Lemma~\ref{lem:bijr}, the same calculation from Equation~\eqref{eq:maj-Bijr} shows that
$\Bij_{2m+1}$ and $\Bij_{2m}\circ\varsigma$ change the total major index by $-r^2$. Hence, by Equation~\eqref{eq:m=0}, these bijections prove Equation~\eqref{eq:A1=A2}. 

\item {\bf Case 3:} endpoints $A_1\prec A_2$ and $B$. Given $P\in\P_{A_1\to B}$ and $Q\in\P_{A_2\to B}$, the relative position of $A_1$ and $A_2$ implies that, if $P$ arrives at $C_j$ (where $j\ge0$) with an $N$ step, then there must be another crossing to the left of $C_j$.
It follows that 
\begin{equation}\label{eq:j=j+1}
\pathsN{A_1}{B}{A_2}{B}{j}=\pathsE{A_1}{B}{A_2}{B}{j+1}
\end{equation} 
for all $j\ge0$.

To prove Equations~\eqref{eq:B1=B2-noq} and~\eqref{eq:B1=B2} for $r\ge1$, we start with the decomposition~\eqref{eq:decomp0} for $B_\de=B_\bde=B$. 
For the set $\pathsN{A_1}{B}{A_2}{B}{r}$, Lemma~\ref{lem:bijr} gives a bijection
$$\pathsN{A_1}{B}{A_2}{B}{r}\stackrel{\bij_r}{\to}\pathsE{A_1-\vv}{B}{A_2+\vv}{B}{r}=\pathsN{A_1-\vv}{B}{A_2+\vv}{B}{r-1},$$
using Equation~\eqref{eq:j=j+1} with initial points $A_1-\vv\prec A_2+\vv$. Thus, the composition $\Bij_r=\bij_1\circ\bij_2\circ\dots\circ\bij_r$ gives a bijection
\begin{equation}\label{eq:BijrN}
\Bij_r:\pathsN{A_1}{B}{A_2}{B}{r}\to \pathsN{A_1-r\vv}{B}{A_2+r\vv}{B}{0}.
\end{equation}

On the other hand, for the set $\pathsE{A_1}{B}{A_2}{B}{r}$, Equation~\eqref{eq:j=j+1} and Lemma~\ref{lem:bijr} give a bijection
$$\pathsE{A_1}{B}{A_2}{B}{r}=\pathsN{A_1}{B}{A_2}{B}{r-1}\stackrel{\bij_{r-1}}{\to} \pathsE{A_1-\vv}{B}{A_2+\vv}{B}{r-1}.$$
Thus, the composition $\Bij_{r-1}=\bij_1\circ\bij_2\circ\dots\circ\bij_{r-1}$ gives a bijection
$$\Bij_{r-1}:\pathsE{A_1}{B}{A_2}{B}{r}\to \pathsE{A_1-(r-1)\vv}{B}{A_2+(r-1)\vv}{B}{1}.$$
The right-hand side equals $\pathsN{A_1-(r-1)\vv}{B}{A_2+(r-1)\vv}{B}{0}$ by Equation~\eqref{eq:j=j+1}, and this set is in bijection with $\pathsE{A_1-r\vv}{B}{A_2+r\vv}{B}{0}$ by Lemma~\ref{lem:bijr} with $r=0$. The composition yields a bijection 
\begin{equation}\label{eq:BijrE}
\bij_0\circ\Bij_{r-1}:\pathsE{A_1}{B}{A_2}{B}{r}\to \pathsE{A_1-r\vv}{B}{A_2+r\vv}{B}{0}.
\end{equation}

Combining~\eqref{eq:BijrN} and~\eqref{eq:BijrE}, and using the decomposition~\eqref{eq:decomp0} on the range and on the domain, we get a bijection from $\pathsP{A_1}{B}{A_2}{B}{r}$ to 
$\paths{A_1-r\vv}{B}{A_2+r\vv}{B}$, which proves  Equation~\eqref{eq:B1=B2-noq}.

If condition~\eqref{condition} is satisfied, then Lemma~\ref{lem:bijr} implies that Equation~\eqref{eq:maj-Bijr} holds when $(\hat P,\hat Q)$ is the image of $(P,Q)$ by either of the maps~\eqref{eq:BijrN} or~\eqref{eq:BijrE}. Thus, the total major index changes by $-r(r+x_2-x_1)$ in either case, proving Equation~\eqref{eq:B1=B2}. 

\item {\bf Case 4:} endpoints $A$ and $B$. Assume that $r\ge1$. First observe that the map $\varsigma$ from Equation~\eqref{eq:swap} gives a trivial bijection between $\pathsN{A}{B}{A}{B}{r}$ and $\pathsE{A}{B}{A}{B}{r}$ which preserves the total major index.
Using the decomposition~\eqref{eq:decomp0}, it follows that
\begin{equation}\label{eq:HN}
H^{\ge r}_{A\to B,A\to B}(q)=2\sum_{(P,Q)\in\pathsN{A}{B}{A}{B}{r}} q^{\maj(P)+\maj(Q)}.
\end{equation}

Lemma~\ref{lem:bijr} gives a bijection
$$\pathsN{A}{B}{A}{B}{r}\stackrel{\bij_r}{\to}\pathsE{A-\vv}{B}{A+\vv}{B}{r}=\pathsN{A-\vv}{B}{A+\vv}{B}{r-1},$$
using Equation~\eqref{eq:j=j+1} with intial points $A-\vv\prec A+\vv$. Thus, the composition $\Bij_r=\bij_1\circ\bij_2\circ\dots\circ\bij_r$ gives a bijection 
\begin{equation}\label{eq:BijrNA}
\Bij_r:\pathsN{A}{B}{A}{B}{r}\to \pathsN{A-r\vv}{B}{A+r\vv}{B}{0}
\end{equation}
that changes the total major index by $-r^2$. Composing with $\bij_0$,
we get a bijection 
\begin{equation}\label{eq:bij0BijrN}
\bij_0\circ\Bij_r:\pathsN{A}{B}{A}{B}{r}\to \pathsE{A-(r+1)\vv}{B}{A+(r+1)\vv}{B}{0}
\end{equation}
that changes the total major index by $-(r+1)^2$.

Combining~\eqref{eq:bij0BijrN} and~\eqref{eq:BijrNA}, with $r+1$ playing the role of $r$ in the latter, and 
using the decomposition~\eqref{eq:decomp0} with $r=0$, initial points $A-(r+1)\vv$ and $A+(r+1)\vv$, and final points $B$ for both paths, we obtain
\begin{multline*}
\sum_{(P,Q)\in\pathsN{A}{B}{A}{B}{r}} q^{\maj(P)+\maj(Q)}+\sum_{(P,Q)\in\pathsN{A}{B}{A}{B}{r+1}} q^{\maj(P)+\maj(Q)}\\
=q^{(r+1)^2} H^{\ge0}_{A-(r+1)\vv\to B, A+(r+1)\vv\to B}=q^{(r+1)^2} \qbin{u-x+v-y}{u-x+r+1}\qbin{u-x+v-y}{u-x-r-1},
\end{multline*}
where the last equality uses Equation~\eqref{eq:m=0}. By Equations~\eqref{eq:HN} and~\eqref{eq:fr}, this is equivalent to
\begin{equation}\label{eq:HrHr+1}
H^{\ge r}_{A\to B,A\to B}(q)+H^{\ge r+1}_{A\to B,A\to B}(q)=2f_{r+1,A,A,B,B}(q).
\end{equation}
Solving for $H^{\ge r}_{A\to B,A\to B}(q)$ and iterating, we obtain
$$H^{\ge r}_{A\to B,A\to B}(q)=2\left(f_{r+1,A,A,B,B}(q)-f_{r+2,A,A,B,B}(q)+f_{r+3,A,A,B,B}(q)-\cdots\right),$$
which proves Equation~\eqref{eq:A1=A2,B1=B2}, and hence Equation~\eqref{eq:A1=A2,B1=B2-noq} as well by setting $q=1$.\qedhere
\end{list1}
\end{proof}

It is also possible to obtain an alternative expression for Equations~\eqref{eq:A1=A2,B1=B2} and~\eqref{eq:A1=A2,B1=B2-noq}  by iterating the recurrence~\eqref{eq:HrHr+1} in the other direction, by decreasing $r$ instead. When the iteration reaches $r=0$, Equation~\eqref{eq:HN} no longer holds, and~\eqref{eq:decomp0} must be replaced by the decomposition
$$\paths{A}{B}{A}{B}=\{(P,P):P\in\P_{A\to B}\}\sqcup \pathsN{A}{B}{A}{B}{0}\sqcup\pathsE{A}{B}{A}{B}{0}.$$
Indeed, for pairs $(P,Q)$ in the left-hand where $P\neq Q$, the paths $P$ and $Q$ must arrive at $C_0$ with different steps. 
Enumerating each set in the decomposition by total major index, using Lemma~\ref{lem:qbin},
and noting that the last two sets are in bijection with each other via the swap $\varsigma$, we obtain the identity
$$\qbin{u-x+v-y}{u-x}^2=\qbinsq{u-x+v-y}{u-x}+2\sum_{(P,Q)\in\pathsN{A}{B}{A}{B}{0}} q^{\maj(P)+\maj(Q)}.$$

Iterating Equation~\eqref{eq:HrHr+1} by decreasing $r$, and using the expression
$$2\sum_{(P,Q)\in\pathsN{A}{B}{A}{B}{0}} q^{\maj(P)+\maj(Q)}=\qbin{u-x+v-y}{u-x}^2-\qbinsq{u-x+v-y}{u-x}$$
in place of $H^{\ge 0}_{A\to B,A\to B}(q)$, we get the alternative formula
\begin{equation}\label{eq:A1=A2,B1=B2-alt}
H^{\ge r}_{A\to B,A\to B}(q)=2\sum_{j=0}^{r-1}(-1)^{j}f_{r-j,A,A,B,B}(q)+(-1)^r\left(\qbin{u-x+v-y}{u-x}^2-\qbinsq{u-x+v-y}{u-x}\right)
\end{equation}
for $r\ge1$. Setting $q=1$, we obtain
\begin{align}\label{eq:A1=A2,B1=B2-noq-alt}
\left|\P^{\ge r}_{A\to B,A\to B}\right|&=2\sum_{j=0}^{r-1}(-1)^{j}\binom{u-x+v-y}{u-x+r-j}\binom{u-x+v-y}{u-x-r+j}\\
&\quad +(-1)^r\left(\binom{u-x+v-y}{u-x}^2-\binom{u-x+v-y}{u-x}\right). \qedhere
\nonumber
\end{align}

\section{Connections to non-intersecting paths}\label{sec:connections}

\subsection{Enumerating tuples of non-intersecting paths by major index}\label{sec:non-int}

In this section we apply some of the above constructions to give an alternative proof of Krattenthaler's beautiful refinement \cite[Thm.\ 2]{Krat-nonint} by total major index of the Lindstr\"om--Gessel--Viennot determinantal formula enumerating $k$-tuples of non-intersecting lattice paths~\cite{Lin,GV}.
For a tuple $\bP=(P_1,P_2,\dots,P_k)$ of paths with $N$ and $E$ steps, define $\maj(\bP)=\sum_{i=1}^k \maj(P_i)$. We say that $\bP$ is intersecting if some point in $\Z^2$ is shared by more than one path in $\bP$, and that it is non-intersecting otherwise, namely, if all the paths are disjoint.

\begin{theorem}[\cite{Krat-nonint}]\label{thm:non-int}
Let $I_i=(x_i,y_i)$ and $F_i=(u_i,v_i)$ be points in $\Z^2$ for $1\le i\le k$, with $I_1\prec I_2\prec \dots \prec I_k$ and $F_1\prec F_2\prec \dots \prec F_k$. Suppose additionally that $x_i+y_i$ is constant for all $i$. 
Let $\fP^\circ$ is the set of non-intersecting tuples $\bP=(P_1,P_2,\dots,P_k)$ such that $P_i\in\P_{I_i\to F_i}$ for $1\le i\le k$.
Then 
$$\sum_{\bP\in\fP^\circ} q^{\maj(\bP)}=\det_{1\le i,j\le k}\left(q^{i(x_i-x_j)}\qbin{u_i-x_j+v_i-y_j}{u_i-x_j}\right).$$
\end{theorem}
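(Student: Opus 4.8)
**The plan is to derive the determinantal formula via a sign-reversing involution on intersecting tuples, following the Lindström–Gessel–Viennot strategy but tracking $\maj$ using the machinery of Section~\ref{sec:ingredients}.** First I would expand the determinant by the Leibniz formula as a signed sum over permutations $\pi\in\S_k$. By Lemma~\ref{lem:qbin}, each entry $\qbin{u_i-x_j+v_i-y_j}{u_i-x_j}$ is the $\maj$-generating function for $\P_{I_j\to F_i}$, so the factor $q^{i(x_i-x_j)}$ must be absorbed into a weight on tuples. The key observation is that, because all the $I_j$ share the same coordinate sum (the hypothesis $x_i+y_i$ constant, which is exactly condition~\eqref{condition}), the power $q^{i(x_i-x_j)}$ is the correct normalization making the signed sum equal to
$$\sum_{\pi\in\S_k}\sgn(\pi)\,\prod_{i} q^{i(x_i-x_{\pi(i)})}\!\!\!\sum_{P_i\in\P_{I_{\pi(i)}\to F_i}}\!\!\!q^{\maj(P_i)}=\sum_{\pi}\sgn(\pi)\sum_{\bP\in\fP_\pi}q^{w(\bP)},$$
where $\fP_\pi$ ranges over tuples with $P_i\in\P_{I_{\pi(i)}\to F_i}$ and $w$ is $\maj$ plus the correction term. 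The main bookkeeping task here is to check that this correction term is \emph{invariant} under the involution I will construct, so that it can be factored out and ultimately shown to vanish except on the non-intersecting tuples (where $\pi=\mathrm{id}$).

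Next I would construct the involution. For an intersecting tuple $\bP\in\fP_\pi$, choose the two paths $P_i,P_j$ that intersect according to a canonical rule (say, the smallest index $i$ whose path meets some $P_j$ with $j>i$, and then at the last common vertex $C$ read from the right). I would apply the map $\phiC$ from Definition~\ref{def:phiC} to the pair $(P_i,P_j)$ at $C$, swapping their suffixes after transporting the prefixes by $\bt$ and $\bs$. This sends a tuple in $\fP_\pi$ to one in $\fP_{\pi\cdot(i\,j)}$, reversing $\sgn(\pi)$; since $\phiC$ is an involution by Lemma~\ref{lem:phiC}, and the canonical choice of intersecting pair is preserved by the swap (this is the point requiring care), the map is a sign-reversing involution on intersecting tuples. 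The crucial weight computation is that, by Equation~\eqref{eq:majP'Q'C} together with condition~\eqref{condition}, the combined statistic $w$ is exactly preserved: the change $-(x_j-x_i+1)$ in total $\maj$ is compensated by the change in the $q^{i(x_i-x_{\pi(i)})}$ factors under $\pi\mapsto\pi\cdot(i\,j)$. Verifying this cancellation precisely is what forces the normalization in the determinant and is the technical heart of the argument.

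All intersecting tuples then cancel in pairs, leaving only non-intersecting tuples. A standard argument shows that a non-intersecting tuple with $P_i\in\P_{I_{\pi(i)}\to F_i}$ can only occur for $\pi=\mathrm{id}$, because the strict orderings $I_1\prec\dots\prec I_k$ and $F_1\prec\dots\prec F_k$ force disjoint paths to connect $I_i$ to $F_i$ in order. For $\pi=\mathrm{id}$ the correction factor $q^{i(x_i-x_i)}=1$, so the surviving terms contribute exactly $\sum_{\bP\in\fP^\circ}q^{\maj(\bP)}$, yielding the claimed identity.

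\textbf{The main obstacle} I anticipate is confirming that the canonical choice of the intersecting pair and crossing point is genuinely respected by $\phiC$, so that the involution is well-defined and fixed-point-free on intersecting tuples; the subtlety is that $\phiC$ alters the prefixes of the two paths (via $\bt,\bs$) rather than merely swapping suffixes, so I must argue that it does not create or destroy intersections to the right of $C$ in a way that changes which pair is selected. I would handle this by choosing $C$ to be the \emph{rightmost} relevant intersection and noting that $\phiC$ preserves all suffixes after $C$, exactly as in the proof of Lemma~\ref{lem:bijr}, so the selection rule depends only on data that the involution leaves fixed.
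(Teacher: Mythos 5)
Your overall architecture matches the paper's: expand the determinant over $\S_k$, attach the weight $\sgn(\sigma)\,q^{\sum_i i(x_i-x_{\sigma(i)})}q^{\maj(\bP)}$, and cancel intersecting tuples via a sign-reversing involution built from $\phiC$, with the hypothesis $x_i+y_i$ constant guaranteeing condition~\eqref{condition}. However, there is a genuine gap in the mechanism: you propose to apply $\phiC$ directly to the pair $(P_i,P_j)$ at their last common vertex, but $\phiC$ is only defined on $\pathsNC{A_1}{B_\de}{A_2}{B_\bde}$, i.e., it requires the step of the first path \emph{arriving} at $C$ to be an $N$ and that of the second to be an $E$. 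At the last intersection point $D$ of two paths, what the ordering of the endpoints controls is the pair of steps \emph{leaving} $D$ (an $N$ and an $E$), not the steps arriving there, so $\phiC$ at $C=D$ is in general not applicable. Moreover, even where it is applicable, $\phiC$ moves the starting points to $A_2+\vv$ and $A_1-\vv$; applied at the untranslated initial points $I_{\sigma(j)},I_{\sigma(j+1)}$ this produces paths starting at $I_{\sigma(j+1)}+\vv$ and $I_{\sigma(j)}-\vv$, which are not among the prescribed points $I_1,\dots,I_k$, so the image would not even lie in the signed sum you are cancelling. Finally, the $\maj$ change $-(x_2-x_1+1)$ from Lemma~\ref{lem:phiC} would then be off by exactly one power of $q$ from the change $x_{\sigma(j+1)}-x_{\sigma(j)}$ in the correction exponent $\sum_i i(x_i-x_{\sigma(i)})$ under the transposition, so the cancellation you describe as ``the technical heart'' would fail.

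The paper resolves all three problems at once by conjugating with the translation $T$ of Equation~\eqref{eq:T}: it sets $(P'_j,P'_{j+1})=T^{-1}\phiC T(P_j,P_{j+1})$ with $C=D+\ee+\nn$, where $D$ is a canonically chosen intersection of \emph{neighboring} paths $P_j,P_{j+1}$ (largest $x$-coordinate, then largest $y$-coordinate, then smallest $j$). Translating $P_j$ by $\ee$ and $P_{j+1}$ by $\nn$ converts the steps leaving $D$ into steps arriving at $C$ of the required types, makes the starting points come out to exactly $I_{\sigma(j+1)}$ and $I_{\sigma(j)}$ after undoing the translation, and shifts the $x$-coordinates so that the $\maj$ change becomes exactly $-(x_{\sigma(j+1)}-x_{\sigma(j)})$, which cancels against the permutation weight. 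Your closing paragraph correctly identifies that the canonical choice must be preserved by the involution (the paper's choice of $D$ with largest $x$ then largest $y$ among intersections of neighboring paths accomplishes this, since $\phiC$ fixes everything north and east of $D$), but without the translation step the involution itself is not well defined, so this is the idea you would need to add.
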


Let us introduce some notation for the proof. For a path $P\in\P_{I\to F}$, where $I,F\in\Z^2$, and a two-dimensional vector $\uu$ with integer coordinates, define $P+\uu\in\P_{I+\uu\to F+\uu}$ to be the path obtained by translating $P$ by $\uu$. Consider the vectors $\ee=(1,0)$ and $\nn=(0,1)$, and define the bijection 
\begin{equation}\label{eq:T}
\begin{array}{rrcl} T:&\paths{I}{F}{I'}{F'}&\to&\paths{I+\ee}{F+\ee}{I'+\nn}{F'+\nn}\\
&(P,Q)&\mapsto&(P+\ee,Q+\nn).
\end{array}
\end{equation}

\begin{proof}[Proof of Theorem~\ref{thm:non-int}]
As in~\cite{Krat-nonint}, let $\S_k$ denote the symmetric group, and consider the larger set 
$$\fP=\bigcup_{\sigma\in\S_k}\{(P_1,P_2,\dots,P_k): P_i\in\P_{I_{\sigma(i)}\to F_i} \text{ for all }i\}$$
of all tuples, whether intersecting or not. To each $\bP\in\fP$, assign a weight
$$w(\bP)=\sgn(\sigma) \ q^{\sum_{i=1}^k i(x_i-x_{\sigma(i)})} q^{\maj(\bP)},$$
and note that $w(\bP)=q^{\maj(P)}$ if $\bP\in\fP^\circ$, since in this case $\sigma$ must be the identity. The weighted sum of all tuples, using Lemma~\ref{lem:qbin}, is 
\begin{align*}
\sum_{\bP\in\fP} w(\bP)&=\sum_{\sigma\in\S_k} \sgn(\sigma) \prod_{i=1}^k q^{i(x_i-x_{\sigma(i)})} \sum_{P_i\in\P_{I_{\sigma(i)}\to F_i}} q^{\maj(P_i)}\\
&=\sum_{\sigma\in\S_k} \sgn(\sigma) \prod_{i=1}^k q^{i(x_i-x_{\sigma(i)})} \qbin{u_i-x_{\sigma(i)}+v_i-y_{\sigma(i)}}{u_i-x_{\sigma(i)}}\\
&=\det_{1\le i,j\le k}\left(q^{i(x_i-x_j)}\qbin{u_i-x_j+v_i-y_j}{u_i-x_j}\right),
\end{align*}
which equals the determinant in the statement.

Thus, it suffices to show that the contributions of all intersecting tuples to the weighted sum cancel out, leaving only
$\sum_{\bP\in\fP^\circ} w(\bP)=\sum_{\bP\in\fP^\circ} q^{\maj(\bP)}$. This key step is achieved by constructing an involution $\Phi$ on the set $\fP^{\Join}=\fP\setminus\fP^\circ$ of intersecting tuples, having the property that $w(\Phi(\bP))=-w(\bP)$. 
Such an involution is given in~\cite{Krat-nonint}, based on a four-step bijection described in \cite[Prop.\ 27]{Krat-nonint} in terms of two-rowed arrays. Instead, here we present an involution $\Phi$ that relies on the map $\phiC$ from Definition~\ref{def:phiC}, and so it has a simple visualization in terms of paths. 

Let $\bP=(P_1,P_2,\dots,P_k)\in\fP^{\Join}$ be an intersecting tuple, where $P_i\in\P_{I_{\sigma(i)}\to F_i}$ for $1\le i\le k$, for some 
$\sigma\in\S_k$.
As in~\cite{Krat-nonint}, of all intersection points between neighboring paths (i.e., $P_i$ and $P_{i+1}$ for some $i$), consider the ones with largest $x$-coordinate and, among them, let $D$ be the one with largest $y$-coordinate. Let $j$ be the smallest index such that $P_j$ and $P_{j+1}$ intersect at point $D$.
Let 
\begin{equation}\label{eq:TphiT} (P'_j,P'_{j+1})=T^{-1}\phiC T(P_j,P_{j+1}),\end{equation}
where $C=D+\ee+\nn$ and $T$ is defined in~\eqref{eq:T}, and let
$$\Phi(\bP)=(P_1,\dots,P_{j-1},P'_j,P'_{j+1},P_{j+2},\dots,P_k).$$
See Figure~\ref{fig:Phi} for an example.

\begin{figure}[htb]
\centering
\begin{tikzpicture}[scale=0.4]
 \draw[gray,dotted,very thick,fill](0,0) \start\N\E\N\N\N\N\E\N;
    \draw[gray] (0.2,4) node {$P_1$};
 \draw[red,very thick,fill](-1,1) \start\E\N\E\E\N\E\N\N;
    \draw[red] (-1,1.9) node {$P_2$};
    \draw[blue,dashed,very thick,fill](1,-1) \start\N\N\N\E\E\E\N;
    \draw[blue] (1.9,-.5) node {$P_3$};
\intersection{2}{2};
	\draw (2,2) node[below] {$D$}; 
	\draw (-1,1) node[left] {$I_1$};
	\draw (0,0) node[left] {$I_2$};
	\draw (1,-1) node[left]  {$I_3$};
	\draw (2,6) node[right] {$F_1$};
	\draw (3,5) node[right] {$F_2$};
	\draw (4,3) node[right] {$F_3$}; 
	\draw[<->] (6.8,1.5)-- node[above]{$\Phi$} (8.2,1.5);
	\draw[->] (1.5,-2)-- node[right]{$T$} (1.5,-3);

\begin{scope}[shift={(0,-8)}] 
   \draw[red,very thick,fill](0,1) \start\E\N\E\E\N\E\N\N;
    \draw[red] (0,1.8) node {$P$};
    \draw[blue,dashed,very thick,fill](1,0) \start\N\N\N\E\E\E\N;
    \draw[blue] (1.8,0.5) node {$Q$};
	\draw (0,1) node[left] {$I_1+\ee$};
	\draw (1,0) node[left]  {$I_3+\nn$};     
	\draw (4,5) node[right] {$F_2+\ee$};
	\draw (4,4) node[right] {$F_3+\nn$};    
\crossing{3}{3};
	\draw (3,3) node[below right] {$C$}; 
	\draw[<->] (6.8,1.5)-- node[above]{$\phiC$} (8.2,1.5); 
\end{scope}

\begin{scope}[shift={(13,-8)}] 
    \draw[red,very thin,dotted,fill](0,1) \start\E\N\E\E\N;
    \draw[blue,very thin,dotted,fill](1,0) \start\N\N\N\E\E;
    \draw[magenta,very thick,fill](2,-1) \start\E\N\N\N\N\E\N\N;
    \draw[magenta] (3.8,-.5) node {$R$};
    \draw[cyan,dashed,very thick,fill](-1,2) \start\E\E\N\E\E\E\N;
    \draw[cyan] (-.5,2.8) node {$S$};
	\draw (-1,2) node[left] {$I_1+\nn$};
	\draw (2,-1) node[left]{$I_3+\ee$};     
	\draw (4,5) node[right] {$F_2+\ee$};
	\draw (4,4) node[right] {$F_3+\nn$};    
\crossing{3}{3};
	\draw (3,3) node[below right] {$C$}; 
\end{scope}

\begin{scope}[shift={(13,0)}] 
 \draw[gray,dotted,very thick,fill](0,0) \start\N\E\N\N\N\N\E\N;
    \draw[gray] (0.2,4) node {$P_1$};
    \draw[magenta,very thick,fill](1,-1) \start\E\N\N\N\N\E\N\N;
    \draw[magenta] (3,-.5) node {$P'_2$};
    \draw[cyan,dashed,very thick,fill](-1,1) \start\E\E\N\E\E\E\N;
    \draw[cyan] (-.5,1.8) node {$P'_3$};
    \intersection{2}{2};
	\draw (2,2) node[below right] {$D$}; 
	\draw (-1,1) node[left] {$I_1$};
	\draw (0,0) node[left] {$I_2$};
	\draw (1,-1) node[left]  {$I_3$};
	\draw (2,6) node[right] {$F_1$};
	\draw (3,5) node[right] {$F_2$};
	\draw (4,3) node[right] {$F_3$};
	\draw[->] (1.5,-2)-- node[right]{$T$} (1.5,-3);
\end{scope}
\end{tikzpicture}

\caption{An example of the involution $\Phi$. The tuple $\bP=(P_1,P_2,P_3)$ in the top left has $\sigma=213$, and the point $D$ is an intersection of $P_2$ and $P_3$. Applying $T$ to this pair gives the pair $(P,Q)$ in the bottom left, to which we apply $\phiC$, followed by $T^{-1}$, to obtain the pair $(P'_2,P'_3)$ in the top right.}
\label{fig:Phi}
\end{figure}
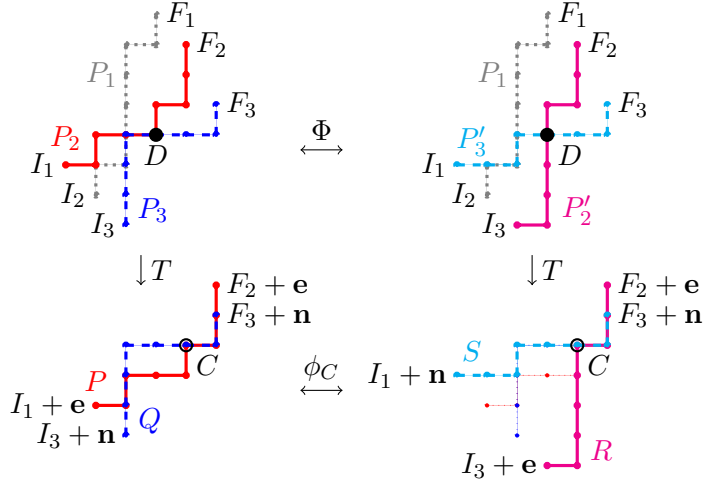

Let us show that $\Phi$ is well defined and that it is an involution on $\fP^{\Join}$. 
Let $(P,Q)=T(P_j,P_{j+1})\in\paths{I_{\sigma(j)}+\ee}{F_j+\ee}{I_{\sigma(j+1)}+\nn}{F_{j+1}+\nn}$. 
Since $D$ is the last intersection point of $P_j$ and $P_{j+1}$, and $F_j\prec F_{j+1}$, the steps of $P_j$ and $P_{j+1}$ that start at $D$ must be an $N$ and an $E$, respectively. The endpoints of these steps, namely $D+\nn$ in $P_j$ and $D+\ee$ in $P_{j+1}$, become a common point $C=D+\ee+\nn$ of the translated paths $P$ and $Q$. Note that the step of $P$ ending at $C$ is an $N$ and the step of $Q$ ending at $C$ is an $E$, i.e., $(P,Q)\in\pathsNC{I_{\sigma(j)}+\ee}{F_j+\ee}{I_{\sigma(j+1)}+\nn}{F_{j+1}+\nn}$.

Applying the map 
$$\phiC:\pathsNC{I_{\sigma(j)}+\ee}{F_j+\ee}{I_{\sigma(j+1)}+\nn}{F_{j+1}+\nn}\to \pathsNC{I_{\sigma(j+1)}+\ee}{F_j+\ee}{I_{\sigma(j)}+\nn}{F_{j+1}+\nn}$$
from Definition~\ref{def:phiC} and Lemma~\ref{lem:phiC}, we obtain a pair $(R,S)=\phiC(P,Q)$. Here $I_{\sigma(j)}+\ee$ plays the role of $A_1$ in the definition, and $A_1-\vv=I_{\sigma(j)}+\ee-\vv=I_{\sigma(j)}+\nn$. Similarly, $I_{\sigma(j+1)}+\nn$ plays the role of $A_2$, and $A_2+\vv=I_{\sigma(j+1)}+\nn+\vv=I_{\sigma(j+1)}+\ee$.

Finally, $(P'_j,P'_{j+1})=T^{-1}(R,S)=(R-\ee,S-\nn)\in \paths{I_{\sigma(j+1)}}{F_j}{I_{\sigma(j)}}{F_{j+1}}$.
Since the step of $R$ ending at $C$ is an $N$ and the step of $S$ ending at $C$ is an $E$, the translated paths $P'_j$ and $P'_{j+1}$ intersect at the point $D=C-\ee-\nn$. It follows that $\Phi(\bP)\in\fP^{\Join}$.
Additionally, since the map $\phiC$ does not change the steps ending at $C$ nor all the subsequent steps of either path, all the steps lying north or east of $D$ remain unchanged in $\Phi(\bP)$.
Thus, to compute the image by $\Phi$ of the tuple $\Phi(\bP)$, one would apply $\phiC$ to the pair $T(P'_j,P'_{j+1})=(R,S)$, recovering $(P,Q)$ (since $\phiC$ is an involution by Lemma~\ref{lem:phiC}), and then apply $T^{-1}$ to this pair, obtaining 
$T^{-1}\phiC T(P'_j,P'_{j+1})=(P_j,P_{j+1})$, so that $\Phi(\Phi(\bP))=\bP$. This proves that $\Phi$ is an involution on $\fP^{\Join}$. 

It remains to show that $\Phi$ is sign-reversing. Let  
$\sigma'\in\S_k$ be the permutation with $\sigma'(j)=\sigma(j+1)$, $\sigma'(j+1)=\sigma(j)$, and $\sigma'(i)=\sigma(i)$ for $i\notin\{j,j+1\}$, so that the $i$th component of $\Phi(\bP)$ is a path in $\P_{I_{\sigma'(i)}\to F_i}$ for $1\le i\le k$. 
By Lemma~\ref{lem:phiC}, and noting that the $x$-coordinates of the initial points of $P$ and $Q$ are $x_{\sigma(j)}+1$ and $x_{\sigma(j+1)}$, respectively, we have
\begin{align*}\maj(P'_j)+\maj(P'_{j+1})&=\maj(R)+\maj(S)\\
&=\maj(P)+\maj(Q)-(x_{\sigma(j+1)}-(x_{\sigma(j)}+1)+1)\\
&=\maj(P_j)+\maj(P_{j+1})-(x_{\sigma(j+1)}-x_{\sigma(j)}).
\end{align*}
We conclude that
\begin{align*}w(\Phi(\bP))&=\sgn(\sigma') \ q^{\sum_{i=1}^k i(x_i-x_{\sigma'(i)})} q^{\maj(\Phi(\bP))}\\
&=-\sgn(\sigma) \ q^{\sum_{i=1}^k i(x_i-x_{\sigma(i)})+x_{\sigma(j+1)}-x_{\sigma(j)}} q^{\maj(\bP)-(x_{\sigma(j+1)}-x_{\sigma(j)})}\\
&=-w(\bP).\qedhere
\end{align*}
\end{proof}

\subsection{A different bijective proof of Theorem~\ref{thm:pairs}}\label{sec:noq}

The standard proof of the Lindstr\"om--Gessel--Viennot formula, which is the $q=1$ specialization of Theorem~\ref{thm:non-int}, uses a simpler involution based on prefix-swapping.
Specifically, let $\pathsPD{A_1}{B_\de}{A_2}{B_\bde}$ be the subset of $\paths{A_1}{B_\de}{A_2}{B_\bde}$ consisting of intersecting pairs, and define an involution
$$\xi:\pathsPD{A_1}{B_\de}{A_2}{B_\bde}\to\pathsPD{A_2}{B_\de}{A_1}{B_\bde}$$
by cutting the paths at their last intersection point and simply swapping the resulting prefixes.
In the standard proof, the involution $\xi$ plays the role of $T^{-1}\phiC T$ in Equation~\eqref{eq:TphiT}. Note that $\xi$ does not behave well with respect to the total major index, which is why it was not used in the proof of Theorem~\ref{thm:non-int}.

In this subsection, we sketch how a similar prefix-swapping bijection can be used instead of $\phiC$ in order to prove Theorem~\ref{thm:pairs}, which enumerates pairs of paths by their number of crossings without tracking the major index.
Define a variation of $\xi$ by cutting the paths at their {\em first} intersection point, instead of the last, and then swapping the resulting prefixes; denote this involution by
$$\xi':\pathsPD{A_1}{B_\de}{A_2}{B_\bde}\to\pathsPD{A_2}{B_\de}{A_1}{B_\bde}.$$
Now let $(P,Q)\in\paths{A_1}{B_\de}{A_2}{B_\bde}$, and suppose that $P$ and $Q$ have a common point from where $P$ leaves with an $E$ step and $Q$ leaves with an $N$ step.
Define
$$\om(P,Q)= T \xi' T^{-1}(P,Q).$$
Note that the condition on $(P,Q)$ guarantees that the pair $T^{-1}(P,Q)=(P-\ee,Q-\nn)$ intersects, so $\om$ is well defined.

In the case that $A_1\prec A_2$ or $A_1=A_2$, one can also define $\om(P,Q)$ directly as follows. Consider the first common point of $P$ and $Q$ from where $P$ leaves with an $E$ step and $Q$ leaves with an $N$ step.
Call the vertices of $P$ and $Q$ immediately after this step the {\em cutting vertices}, and write $P=P_\triangleleft P_\triangleright$ and $Q=Q_\triangleleft  Q_\triangleright$ by splitting each path at its cutting vertex. Now swap the prefixes $P_\triangleleft$ and $Q_\triangleleft$ to obtain a pair $\om(P,Q)=(R,S)$ where
$$R= Q_\triangleleft  P_\triangleright \in\P_{A_2+\vv\to B_\de} \quad \text{and}\quad S=P_\triangleleft  Q_\triangleright\in\P_{A_1-\vv\to B_\bde}.$$
The condition $A_1\prec A_2$ or $A_1=A_2$ guarantees that the cutting vertices of $P$ and $Q$ correspond to the first intersection point of $P-\ee$ and $Q-\nn$. See Figure~\ref{fig:om} for an example. 

\begin{figure}[htb]
\centering
\begin{tikzpicture}[scale=0.4]
    \draw[red,very thick,fill](0,1) \start\E\N\E\E\N\E\N\N;
    \draw[red] (0,1.8) node {$P$};
    \draw[blue,dashed,very thick,fill](1,0) \start\N\N\N\E\E\E\N\E\E;
    \draw[blue] (1.8,0.5) node {$Q$};
	\draw (0,1) node[left] {$A_1$};
	\draw (1,0) node[left] {$A_2$};     
	\draw (4,5) node[right] {$B_\de$};
	\draw (6,4) node[right] {$B_\bde$};    
	\cutting{1}{3}
	\cutting{2}{2}
	\draw[->] (8.5,2)-- node[above]{$\om$} (9.5,2);
	\draw[<-] (2,-1.5)-- node[right]{$T$} (2,-2.5);

\begin{scope}[shift={(0,-8)}] 
    \draw[red,very thick,fill](-1,1) \start\E\N\E\E\N\E\N\N;
    \draw[blue,dashed,very thick,fill](1,-1) \start\N\N\N\E\E\E\N\E\E;
	\draw (-1,1) node[left] {$A_1-\ee$};
	\draw (1,-1) node[left] {$A_2-\nn$};     
	\draw (3,5) node[right] {$B_\de-\ee$};
	\draw (7,3) node[above] {$B_\bde-\nn$};   
	\cutting{1}{2}  
	\draw[<->] (8.5,1.5)--node[above]{$\xi'$} (9.5,1.5);
\end{scope}

\begin{scope}[shift={(15,-8)}] 
    \draw[purple,very thick,fill](1,-1) \start\N\N\N \E\N\E\N\N;
    \draw[violet,dashed,very thick,fill](-1,1) \start\E\N\E \E\E\E\N\E\E;
	\draw (-1,1) node[left] {$A_1-\ee$};
	\draw (1,-1) node[left] {$A_2-\nn$};     
	\draw (3,5) node[right] {$B_\de-\ee$};
	\draw (7,3) node[above] {$B_\bde-\nn$};   
	\cutting{1}{2}  
\end{scope}

\begin{scope}[shift={(15,0)}] 
   \draw[purple,very thick,fill](2,-1) \start\N\N\N \E\N\E\N\N;
    \draw[purple] (2.8,0.5) node {$R$};
    \draw[violet,dashed,very thick,fill](-1,2) \start\E\N\E \E\E\E\N\E\E;
    \draw[violet] (-1,3) node {$S$};
	\draw (-1,2) node[left] {$A_1-\vv$};
	\draw (2,-1) node[left] {$A_2+\vv$};
	\draw (4,5) node[right] {$B_\de$};
	\draw (6,4) node[right] {$B_\bde$};
	\cutting{1}{3}
	\cutting{2}{2}
	\draw[<-] (2,-1.5)-- node[right]{$T$} (2,-2.5);
	\draw[->] (7.5,2)-- node[above]{$\varsigma$} (8.5,2);
\end{scope}

\begin{scope}[shift={(28.5,0)}] 
   \draw[purple,very thick,fill](-1,2) \start\E\N\E \E\E\E\N\E\E;
    \draw[violet,dashed,very thick,fill](2,-1) \start\N\N\N \E\N\E\N\N;
	\draw (-1,2) node[left] {$A_1-\vv$};
	\draw (2,-1) node[left] {$A_2+\vv$};
	\draw (4,5) node[right] {$B_\de$};
	\draw (6,4) node[right] {$B_\bde$};
	\cutting{1}{3}
	\cutting{2}{2}
\end{scope}

\end{tikzpicture}
\caption{The bijection $\om$ and its relationship with the prefix-swapping involution $\xi'$. The crosses indicate the cutting vertices in $P$ and $Q$, which correspond to the first intersection point of the translated paths.}
\label{fig:om}
\end{figure}
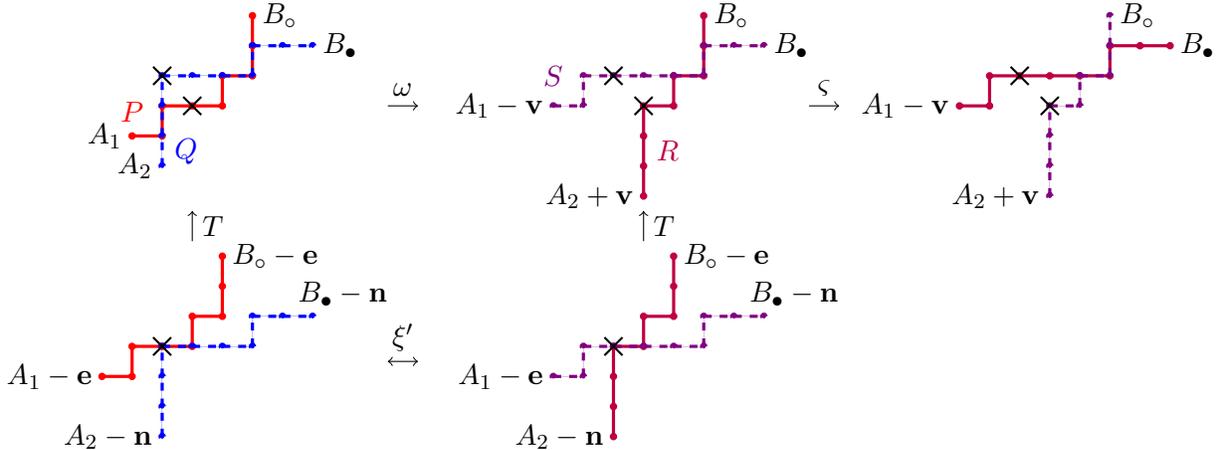

Let $\bom=\varsigma\om$, where $\varsigma$ is the swap from Equation~\eqref{eq:swap}. Note that $\bom(P,Q)\in\paths{A_1-\vv}{B_\bde}{A_2+\vv}{B_\de}$. 

\begin{lemma}\label{lem:om}
Suppose that $A_1\prec A_2$. Let $r\ge1$ if $B_\de=B_\bde$, and let $r\ge2$ otherwise.
Then the map $\bom$ defined above is a bijection
$$\bom:\pathsP{A_1}{B_\de}{A_2}{B_\bde}{r}\to\pathsP{A_1-\vv}{B_\bde}{A_2+\vv}{B_\de}{r-1}.$$
\end{lemma}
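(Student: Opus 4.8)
The plan is to read $\bom=\varsigma\circ\om$ with $\om=T\xi'T^{-1}$ and to exploit the fact that each ingredient is a bijection: $T$ is the translation bijection from~\eqref{eq:T}, $\varsigma$ is the swap from~\eqref{eq:swap}, and $\xi'$ is the involution on intersecting pairs that cuts at the first intersection point and exchanges prefixes. Since prefix-swapping preserves the first intersection point, $\xi'$ is genuinely an involution, and a direct check shows $\om$ is itself an involution: for $(P,Q)$ with $P-\ee,Q-\nn$ intersecting one has $T^{-1}\om(P,Q)=\xi'(P-\ee,Q-\nn)$, so applying $\om$ again runs $\xi'$ a second time and recovers $(P,Q)$. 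Thus $\bom$ is automatically injective wherever it is defined, and the real content is to pin down its domain of definition and the effect on the number of crossings.

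First I would settle well-definedness. The condition required for $\om$ is a common point from which $P$ leaves $E$ and $Q$ leaves $N$, which is exactly the requirement that $P-\ee$ and $Q-\nn$ intersect. Since $A_1\prec A_2$ gives $A_1-\ee\prec A_2-\nn$, the path $P-\ee$ starts strictly northwest of $Q-\nn$, and the crossings of $(P,Q)$ alternate in type along the paths: because $P$ begins northwest of $Q$, the leftmost crossing is one where $P$ arrives and continues by $E$ while $Q$ arrives and continues by $N$. The split of such a crossing furnishes a vertex from which $P$ leaves $E$ and $Q$ leaves $N$, and the northwest--southeast position of the starting points forces the first intersection of $P-\ee$ and $Q-\nn$ to occur precisely there. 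Hence having at least one crossing ($r\ge1$) guarantees that $\om$, and therefore $\bom$, is defined on $\pathsP{A_1}{B_\de}{A_2}{B_\bde}{r}$.

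Next I would analyze the crossing count, which is the crux. Writing $P=P_\triangleleft P_\triangleright$ and $Q=Q_\triangleleft Q_\triangleright$ at the cutting vertices of the first crossing and forming $\bom(P,Q)$ by translating $P_\triangleleft$ by $-\vv$ and $Q_\triangleleft$ by $+\vv$ and exchanging them, I claim the new pair has exactly one fewer crossing. The crossings numbered $2,\dots,r$ of $(P,Q)$ all lie strictly to the right of the cut, inside the suffixes $P_\triangleright,Q_\triangleright$, which survive unchanged, so they are retained; this is the prefix-swapping analogue of the fact that $\phiC$ leaves the suffixes after the cut untouched in the proof of Lemma~\ref{lem:bijr}. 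The first crossing, by contrast, is destroyed: the two prefixes coincide from its merge point to its split, and the opposite translations $-\vv$ and $+\vv$ pull these coincident portions apart, with the translated $P_\triangleleft$ remaining northwest of the translated $Q_\triangleleft$ throughout, so no crossing remains in the prefix region, and a local check at the junction confirms none is created there. Verifying that exactly the leftmost crossing disappears and nothing else changes is the main obstacle. The conclusion is $\cro(\bom(P,Q))=\cro(P,Q)-1$, so $\bom$ sends $\{\ge r\}$ into $\{\ge r-1\}$ while its inverse raises the count by one.

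Finally I would deduce the bijection from the involutivity of $\om$ together with the exact shift by $-1$. The map $\bom$ injects $\pathsP{A_1}{B_\de}{A_2}{B_\bde}{r}$ into $\pathsP{A_1-\vv}{B_\bde}{A_2+\vv}{B_\de}{r-1}$, and $\bom^{-1}$ carries any target pair back while raising the crossing count by one, so surjectivity reduces to checking that every pair in the target set lies in the domain of $\bom^{-1}$, i.e.\ that the relevant translates intersect. When $r-1\ge1$ the target pairs cross and hence intersect, so $r\ge2$ always works; when $r-1=0$ the target carries no crossing constraint, and the translates intersect for all of its pairs exactly when $B_\de=B_\bde$, since the two paths then share an endpoint. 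If instead $B_\de\neq B_\bde$, some target pairs are disjoint and are missed, which is why $r=1$ is admissible only in the equal-endpoint case. These are precisely the thresholds in the statement, completing the proof.
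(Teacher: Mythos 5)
Your proposal is correct and follows essentially the same route as the paper's proof: well-definedness of $\bom$ via the cutting point at the leftmost crossing, the observation that the image retains all crossings except the first, and surjectivity by inverting through the first intersection of the translated pair, whose existence is guaranteed by $r\ge2$ or by the shared endpoint when $B_\de=B_\bde$. The only cosmetic difference is that you make the involutive nature of $\om$ explicit to get injectivity, which the paper handles implicitly by exhibiting the inverse.
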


\begin{proof}
Let $(P,Q)\in\pathsP{A_1}{B_\de}{A_2}{B_\bde}{r}$, and let $(S,R)=\bom(P,Q)$. 
Since $A_1\prec A_2$, the cutting vertices of $P$ and $Q$ precede all the crossings except for the first one, and so
$S$ and $R$ have the same crossings as $P$ and $Q$ minus the first one.
It follows that $(S,R)\in\pathsP{A_1-\vv}{B_\bde}{A_2+\vv}{B_\de}{r-1}$.

To show that $\bom$ is a bijection, let us describe its inverse. Given any pair $(S,R)\in\pathsP{A_1-\vv}{B_\bde}{A_2+\vv}{B_\de}{r-1}$, we can determine the cutting vertices in each path by finding the first intersection of $R-\ee$ and $S-\nn$ (see Figure~\ref{fig:om}). The fact that this intersection exists is clear if $r\ge2$, since $S$ and $R$ cross in this case,
and in the case $r=1$ it is implied by the conditions $B_\de=B_\bde$ and $A_1\prec A_2$.

Cutting $R$ and $S$ at these vertices and swapping their prefixes, so that the resulting paths start at $A_1$ and $A_2$, we recover the unique pair $(P,Q)$ such that 
$\bom(P,Q)=(S,R)$.
\end{proof}

The proof of Theorem~\ref{thm:pairs} that we gave in Section~\ref{sec:pairs-proofs} can now be modified as follows. In Case~1, the bijections $\Bij_{2m}$ and $\Bij_{2m-1}$ can be replaced with the following simpler bijections that repeatedly apply $\bom$:
\begin{align*}
\pathsP{A_1}{B_2}{A_2}{B_1}{2m+1}=\pathsP{A_1}{B_2}{A_2}{B_1}{2m}&\stackrel{\bom^{2m}}{\to}\pathsP{A_1-2m\vv}{B_2}{A_2+2m\vv}{B_1}{1}=\paths{A_1-2m\vv}{B_2}{A_2+2m\vv}{B_1},\\
\pathsP{A_1}{B_1}{A_2}{B_2}{2m}=\pathsP{A_1}{B_1}{A_2}{B_2}{2m-1}&\stackrel{\bom^{2m-1}}{\to}\pathsP{A_1-(2m-1)\vv}{B_2}{A_2+(2m-1)\vv}{B_1}{1}\\
&\qquad =\paths{A_1-(2m-1)\vv}{B_2}{A_2+(2m-1)\vv}{B_1},
\end{align*}
from where Equations~\eqref{eq:switched-noq} and~\eqref{eq:same-noq} follow, using Equation~\eqref{eq:m=0noq}. See Figure~\ref{fig:Om} for an example.

\begin{figure}[htb]
\centering
\begin{tikzpicture}[scale=0.4]
    \draw[red,very thick,fill](0,1) \start\N\N\E\E\E\N\N\N\N\E\E\N\E\N\E\E;
    \draw[red] (0,2.5) node[left] {$P$};
    \draw[blue,dashed,very thick,fill](1,0) \start\N\N\N\N\E\N\E\E\N\N\E\E\N\N\N\N\E;
    \draw[blue] (1,1.5) node[right] {$Q$};
	\cutting{1}{4}
	\cutting{2}{3}
	\draw (0,1) node[left] {\small $A_1$};
	\draw (1,0) node[left] {\small $A_2$};     
	\draw (7,11) node[right] {\small $B_1$};
	\draw (8,9) node[right] {\small $B_2$};     
	\draw[->] (8.5,5.5)-- node[above]{$\bom$} (9.5,5.5);

\begin{scope}[shift={(12,0)}] 
    \draw[purple,very thick,fill](-1,2) \start\N\N\E\E \E\N\E\E\N\N\E\E\N\N\N\N\E;
    \draw[violet,dashed,very thick,fill](2,-1) \start\N\N\N\N \E\N\N\N\N\E\E\N\E\N\E\E;
  	\cuttingthin{1}{4}
	\cuttingthin{2}{3}
	\cutting{3}{6}
	\cutting{4}{5}
	\draw (-1,2) node[left] {\small $A_1-\vv$};
	\draw (2,-1) node[left] {\small $A_2+\vv$};     
	\draw (7,11) node[right] {\small $B_1$};
	\draw (8,9) node[right] {\small $B_2$};     
	\draw[->] (8.5,5.5)-- node[above]{$\bom$} (9.5,5.5);
\end{scope}

\begin{scope}[shift={(26,0)}] 
    \draw[brown,very thick,fill](-2,3)  \start\N\N\E\E\E\N\E\E \N\E\E\N\E\N\E\E;
    \draw[olive,dashed,very thick,fill](3,-2) \start\N\N\N\N\E\N\N\N \N\N\E\E\N\N\N\N\E;
    	\cuttingthin{3}{6}
	\cuttingthin{4}{5}
	\draw (-2,3) node[left] {\small $A_1-2\vv$};
	\draw (3,-2) node[left] {\small $A_2+2\vv$};     
	\draw (7,11) node[right] {\small $B_1$};
	\draw (8,9) node[right] {\small $B_2$};     
\end{scope}
\end{tikzpicture}
\caption{The bijection $\om^2$ applied to the pair of paths from the left of Figure~\ref{fig:Bij}.}
\label{fig:Om}
\end{figure}
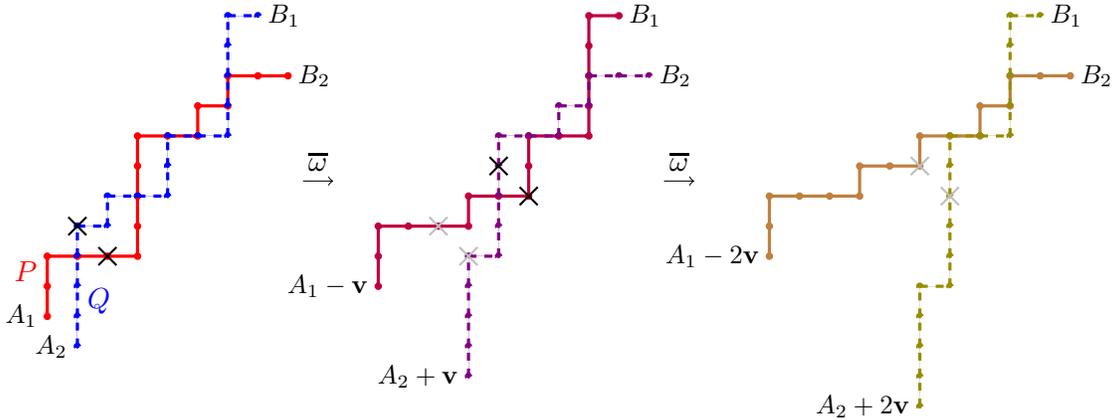

In Case~3, we can use the bijection
$$\bom^r:\pathsP{A_1}{B}{A_2}{B}{r}\to \pathsP{A_1-r\vv}{B}{A_2+r\vv}{B}{0}$$
to prove Equation~\eqref{eq:B1=B2-noq}. 
Equation~\eqref{eq:A1=A2-noq}, corresponding to Case~2, follows now by symmetry, rotating the paths by $180^\circ$. Note that rotation does not preserve the major index, so this argument would not allow us to combine Cases~2 and~3 in the proof of the refined version.

It is also possible to modify the proof of Case 4 using a variation of the map $\bom$. However, the resulting argument is not significantly simpler than our proof using the maps~$\bij_r$.

\section{Further research}\label{sec:further}

The enumeration of lattice paths by major index is intertwined with their enumeration by the number of valleys; equivalently, the number of peaks, the number of turns, or the number of descents, depending on terminology. In~\cite[Thm.\ 3.6.1]{Krat-turns}, Krattenthaler enumerates $k$-tuples of non-intersecting paths by the number of peaks, giving another refinement of the Lindstr\"om--Gessel--Viennot determinantal formula. And in~\cite{KM}, Krattenthaler and Mohanty give formulas counting lattice paths that lie between two given lines with respect to the major index and the number of peaks. 

In a follow-up paper~\cite{part2}, we will refine Theorems~\ref{thm:xaxis}, \ref{thm:line}, and~\ref{thm:pairs_refined} by adding a variable that keeps track of the number of valleys of the paths. Unfortunately, our bijective proofs above do not yield refinements by the number of valleys, since the effect of the bijections $\bt$, $\bs$ and $\si$ on this statistic is not the same for all paths. For example, if $P\in\P^N_{A\to B}$, the number of valleys of $\bs(P)$ and $P$ are equal unless $P$ starts with an $E$, in which case $\bs(P)$ has one fewer valley than $P$. Worse still, the number of valleys of $P$ and $\si(P)$ can differ by $0$, $1$ or $-1$ depending on how $P$ starts. 
To circumvent this challenge, a different approach will be taken in~\cite{part2}, by instead constructing bijections in terms of two-rowed arrays like those used by Krattenthaler and Mohanty~\cite{Krat-turns,Krat-nonint,KM}. While these bijections do not have a natural description in terms of paths, they are suitable to track the number of valleys, in addition to the major index.

Finally, an open problem which is unlikely to have a simple solution would be to generalize Theorem~\ref{thm:pairs} (or the refined Theorem~\ref{thm:pairs_refined}) from pairs of paths  to $k$-tuples of paths, for arbitrary $k$, enumerating them by the total number of crossings. By a simple translation of the paths, similar to the map $T$ from Section~\ref{sec:connections}, tuples of non-crossing paths are in bijection with tuples of non-intersecting paths, so the special case of zero crossings is solved by the Lindstr\"om--Gessel--Viennot determinant.

\subsection*{Acknowledgments}

The author is grateful to Sylvie Corteel and Carla Savage for illuminating conversations and for pointing out the results in \cite{SeoYee},
and to Christian Krattenthaler for useful ideas and further references.


\begin{thebibliography}{} 

\bibitem{CES} S. Corteel, S. Elizalde and C. Savage, Partitions with constrained ranks, in preparation.

\bibitem{part2} S. Elizalde, 
Counting lattice paths by crossings and major index II: tracking descents via two-rowed arrays, 
\href{https://arxiv.org/abs/2112.05696}{arXiv:2112.05696}.

\bibitem{Eng} O. Engelberg, On some problems concerning a restricted random walk, {\it J.\ Appl.\ Probability 2} (1965), 396--404.

\bibitem{Feller} W. Feller, The numbers of zeros and of changes of sign in a symmetric random walk, {\it Enseign.\ Math.} (2) 3 (1957), 229--235. 

\bibitem{Feller-book} W. Feller, {\it An introduction to probability theory and its applications, Vol.\ I}, third edition, John Wiley \& Sons, Inc., New York-London-Sydney 1968. 

\bibitem{Fisher} M. E. Fisher, Walks, walls, wetting, and melting, {\it J.\ Statist.\ Phys.} 34 (1984), 667--729.

\bibitem{FH} J. F\"urlinger and J. Hofbauer, $q$-Catalan numbers, {\it J.\ Combin.\ Theory Ser.\ A} 40 (1985), 248--264.

\bibitem{GGSWY} I. Gessel, W. Goddard, W. Shur, H. S. Wilf and L. Yen, Counting pairs of lattice paths by intersections, {\it J.\ Combin.\ Theory Ser.\ A} 74 (1996), 173--187. 

\bibitem{GV} I. Gessel and G. Viennot, Binomial determinants, paths, and hook length formulae, {\it Adv.\ in Math.} 58 (1985), 300--321. 

\bibitem{KW} M. Kern and S. Walter, 
Ballot theorem and lattice path crossings, {\it Canad.\ J.\ Statist.} 6 (1978), 87--90. 

\bibitem{Krat} C. Krattenthaler, Lattice path enumeration, {\it Handbook of enumerative combinatorics}, 589--678, Discrete Math. Appl., CRC Press, Boca Raton, FL, 2015. 

\bibitem{Krat-turns} C. Krattenthaler, The enumeration of lattice paths with respect to their number of turns, {\it Advances in combinatorial methods and applications to probability and statistics}, 29--58, Stat.\ Ind.\ Technol., Birkh\"auser Boston, Boston, MA, 1997. 

\bibitem{Krat-nonint} C. Krattenthaler, The major counting of nonintersecting lattice paths and generating functions for tableaux, {\it Mem.\ Amer.\ Math.\ Soc.} 115 (1995), no.\ 552, 109 pp.

\bibitem{KM} C. Krattenthaler and S. G. Mohanty, 
On lattice path counting by major index and descents,
{\it European J.\ Combin.} 14 (1993), 43--51. 

\bibitem{Lin} B. Lindstr\"om, On the vector representations of induced matroids. {\it Bull.\ London Math.\ Soc.} 5 (1973), 85--90.

\bibitem{Mac} P.A. MacMahon, {\it Combinatory Analysis}, Cambridge Univ.\ Press, London,
1915--1916. Reprinted, Chelsea, New York, 1960.

\bibitem{Moh} S. G. Mohanty, {\it Lattice path counting and applications}, Probability and Mathematical Statistics, Academic Press, New York-London-Toronto, 1979.

\bibitem{SaSa}  B. E. Sagan and C. D. Savage, Mahonian pairs, {\it J.\ Combin.\ Theory Ser.\ A} 119 (2012), 526--545.

\bibitem{Sen} K. Sen, On some combinatorial relations concerning the symmetric random walk, {\it 
Magyar Tud.\ Akad.\ Mat.\ Kutat\'o Int.\ K\"ozl.} 9 (1965), 335--357. 

\bibitem{SeoYee} S. Seo and A. J. Yee, Enumeration of partitions with prescribed successive rank parity blocks, {\it J.\ Combin.\ Theory Ser.\ A} 158 (2018), 12--35. 

\bibitem{Spivey} M. Z. Spivey, Enumerating lattice paths touching or crossing the diagonal at a given number of lattice points, {\it Electron.\ J.\ Combin.} 19(3) (2012), \#P24, 6 pp.

\end{thebibliography}
\end{document}